\DeclarePairedDelimiter\ceil{\lceil}{\rceil}
\DeclarePairedDelimiter\floor{\lfloor}{\rfloor}
\theoremstyle{plain}
\newtheorem{thm}{Theorem}[section]
\newtheorem{cor}[thm]{Corollary}
\newtheorem{lemma}[thm]{Lemma}
\newtheorem{prop}[thm]{Proposition}
\renewcommand{\arraystretch}{2}
\theoremstyle{definition}
\newtheorem{defn}[thm]{Definition}
\newtheorem{example}[thm]{Example}
\newcommand\ackname{Acknowledgements}
\newenvironment{acknowledgements}{%
	\titlepage
	\null\vfil
	\@beginparpenalty\@lowpenalty
	\begin{center}%
		\bfseries \ackname
		\@endparpenalty\@M
\end{center}}%
{\par\vfil\null\endtitlepage}
\theoremstyle{remark}
\newcommand{\BA}{{\mathbb{A}}}
\newcommand{\BC}{{\mathbb{C}}}
\newcommand{\BL}{{\mathbb{L}}}
\newcommand{\BM}{{\mathbb{M}}}
\newcommand{\BQ}{{\mathbb{Q}}}
\newcommand{\BR}{{\mathbb{R}}}
\newcommand{\BT}{{\mathbb{T}}}
\newcommand{\BZ}{{\mathbb{Z}}}
\newcommand{\CB}{{\mathcal B}}
\newcommand{\CD}{{\mathcal D}}
\newcommand{\CE}{{\mathcal E}}
\newcommand{\CF}{{\mathcal F}}
\newcommand{\CH}{{\mathcal H}}
\newcommand{\CI}{{\mathcal I}}
\newcommand{\CL}{{\mathcal L}}
\newcommand{\CM}{{\mathcal M}}
\newcommand{\CN}{{\mathcal N}}
\newcommand{\CO}{{\mathcal O}}
\newcommand{\CP}{{\mathcal P}}
\newcommand{\CV}{{\mathcal V}}
\newcommand{\CW}{{\mathcal W}}
\newcommand{\CY}{{\mathcal Y}}
\newcommand{\CZ}{{\mathcal Z}}
\newcommand{\ch}{{\mathrm{ch}}}
\DeclareMathOperator{\Hilb}{Hilb}
\DeclareMathOperator{\FuM}{FM}
\newcommand{\tr}{{\mathrm{tr}}}
\newcommand{\CHom}{{\mathcal{H} om}}
\DeclareFontFamily{OT1}{rsfs}{}
\DeclareFontShape{OT1}{rsfs}{n}{it}{<-> rsfs10}{}
\DeclareMathAlphabet{\curly}{OT1}{rsfs}{n}{it}
\newcommand\Ext{\operatorname{Ext}}
\newcommand{\p}{\mathbb{P}}
\newcommand{\Mbar}{{\overline M}}
\newcommand{\GW}{\mathsf{GW}}
\newcommand{\Aut}{\mathrm{Aut}}
\begin{document}
	
	\title[Hilbert schemes  and Fulton--MacPherson compactifications]
	{Hilbert schemes of points and Fulton--MacPherson compactifications}

	\author{Denis Nesterov}
	\address{ETH Z\"urich, Departement Mathematik}
	\email{denis.nesterov@math.ethz.ch}
	\maketitle
	
\begin{abstract} We relate Hilbert schemes of points and Fulton--MacPherson compactifications by an interpolating stability condition. We then derive wall-crossings formulas and some applications for the enumerative geometry of Hilbert schemes. 
\end{abstract}
\setcounter{tocdepth}{1}
\tableofcontents
\section{Introduction} 

\subsection{Configuration space of points and its compactifications}	

Let $X$ be a smooth projective variety of dimension $d$ over the field of complex numbers $\BC$.  Consider an ordered configuration space of $n$ points on $X$, 
	\[ C_{[n]}(X):=\{(x_1, \dots, x_ n) \in X^n \mid x_i\neq x_ j, \text{ if } i\neq j\}.\]
	  It admits a free action of a symmetric group $S_n$,  the associated quotient is the unordered configuration space, 
	\[ C_n(X):=C_{[n]}(X)/S_n.\]
	Two prominent compactifications of $C_n(X)$ are:
	\begin{itemize}

	\item Hilbert schemes of points \cite{Grot}, which parametrise zero-dimensional subschemes of length $n$ on $X$, 
	\[ \Hilb_n(X)=\{ Z\subset X \mid \dim(Z)=0, \ell(Z)=n\}.\]
	\item 	 Fulton--MacPherson compactifications \cite{FM}, which parametrise $n$ distinct points on isotrivial semistable degenerations of $X$, also known as Fulton--MacPherson degenerations or simply bubblings of $X$, and denoted by $W$, 
	\begin{align*}
	 &\FuM_{[n]}(X)=\{(x_1, \dots, x_ n) \in W^n \mid \text{stability} \} /\hspace{-0.1cm} \sim,\\
	 	&\FuM_n(X)= [\FuM_{[n]}(X)/S_n],
	 	\end{align*}
 	such that points are identified by automorphisms of bubbles. 
		\end{itemize}

Hilbert schemes are home to numerous  mathematical marvels, like symmetric functions \cite{Haim}, infinite-dimensional Lie algebras \cite{Nak, Gro}, instantons \cite{ADHM}, etc. While smooth in dimensions one and two, they are very singular in higher dimensions. On the other hand, Fulton--MacPherson compactifications are  no less remarkable objects, generalising moduli spaces of stable marked curves with a fixed root curve $\Mbar_{C,n}$. Unlike Hilbert schemes, they are smooth in all dimensions, such that the complement of the configuration space $C_n(X)$ is a normal crossing divisor. 
\subsection{Interpolation} In this work,  we define a class of spaces, 
\[ \FuM^\epsilon_n(X), \quad \epsilon \in \BR_{>0}, \]
which interpolate between $\Hilb_n(X)$ and $\FuM_n(X)$, as we vary the stability parameter $\epsilon$. These spaces parametrise zero-dimensional subschemes on Fulton--MacPherson degenerations of $X$, 
\[ Z \subset W,\]
subject to some very simple conditions, the most important of which controls the length of $Z$ at its support and on the bubbles of $W$. Heuristically, as we vary $\epsilon$ in dimension two, it removes strata of the boundary divisor of $\Hilb_n(X)$, which also goes by the name of exceptional divisor, 
\[ \partial \Hilb_n(X)=\Hilb_n(X)\setminus C_n(X), \]
replacing them by components of the normal crossing boundary divisor\footnote{Note that $\FuM_n(X)$ is a stacky quotient of $\FuM_{[n]}(X)$, the action of $S_n$ has solvable stabilizers;  while $\partial \Hilb_n(X)$ is singular,   $\partial \FuM_n(X)$ is a stacky normal crossing divisor. }  of $\FuM_n(X)$. 
Similarly, as we vary $\epsilon$ in higher dimensions, it  removes all those monstrous irreducible components of $\Hilb_n(X)$, leaving a smoothed version of the main component - $\FuM_n(X)$. 

\subsection{Tautological integrals}
However, the classical geometric aspects of this interpolation is not the focus of this work. Instead, we study its implications for the enumerative geometries of the two spaces. Assuming $d\leq 3$, we derive a wall-crossing formula, Corollary \ref{maincor}, which relates tautological integrals,

\[\int_{[\Hilb_{n}(X)]^{\mathrm{vir}} }\prod_i \tau_{k_i}\quad \text{and} \quad \int_{[\FuM_{[n]}(X)] }\prod_j \psi_j^{k_j}.\]
Note that the appearance of  ordered Fulton--MacPherson spaces is deliberate and not significant for our purposes, since the tautological integrals on $\FuM_{[n]}(X)$  and $\FuM_{n}(X)$ are related by a factor of $n!$. 

The wall-crossing formula is not a statement of equivalence of the integrals above, nor it prioritises any of the two. It rather asserts universality of the third type of integrals which appears in the formula, referred to as $I$-functions.  By Proposition \ref{univer}, in our context, $I$-functions are essentially the equivariant tautological integrals on $\Hilb_n(\BC^d)$, 
 \[\int_{[\Hilb_{n}(\BC^d)]^{\mathrm{vir}} }\prod_i \tau_{k_i} \in \BQ[a_i^\pm].\]
 The  formula  expresses integrals on $\Hilb_{n}(X)$ in terms of those on  $\Hilb_n(\BC^d)$ and $\FuM_{[n]}(X)$. In some sense, $\FuM_{[n]}(X)$ is responsible for the correction in the comparison of $\Hilb_n(X)$ with $\Hilb_n(\BC^d)$.  By \cite{FM},  the cohomology ring of $\FuM_{[n]}(X)$ has a universal presentation whose relations involve Chern classes of $X$ and boundary divisors of $\FuM_{[n]}(X)$.  The immediate consequence of the wall-crossing formula is therefore  the universality of  integrals on $\Hilb_n(X)$, Corollary \ref{unicor}, which was considered for surfaces in \cite{EGL}. Our methods provide a different perspective on this phenomenon, extending it to equivariant integrals in dimension three. 
 
 We also illustrate how the wall-crossing formula works by computing the simplest kind of tautological integral of $\Hilb_n(X)$ - the topological Euler characteristics $e(\Hilb_n(X))$ - providing another proof for the threefold calculations of \cite{LiDT,LP}. The computation showcases the significance of $\psi$-classes on $\FuM_{[n]}(X)$, to which we dedicate a whole section, Section \ref{comp}.


\subsection{Affine spaces} The wall-crossing can also be applied to $\Hilb_n(\BC^d)$ themselves. However, for it to give something meaningful, there should be poles in equivariant variables, as is observed in Section \ref{secaff}. Using the wall-crossing, we provide an effective way to determine tautological integrals equivariant with respect to the diagonal torus on $\Hilb_n(\BC^2)$ for all $n$ in terms of integrals for finitely many values of $n$, Corollary \ref{strangecor}. 

The spaces $\FuM_n(\BC^d)$ are highly understudied. In light of our interpolating stability, it is natural to expect that their geometry might have equally interesting representation-theoretic and combinational facets like  $\Hilb_n(\BC^d)$. For example, graphs and trees play essentially the same role for them as partitions for $\Hilb_n(\BC^d)$ - they enumerate various classes on $\FuM_n(\BC^d)$, as well as the torus fixed components, see Section \ref{sector}. Moreover, the complex Fulton--MacPherson operads \cite{Getz} give rise to similar recursive structures on the cohomology of $\FuM_n(\BC^d)$ as the action of the Heisenberg algebra \cite{Nak,Gro} in the case of $\Hilb_n(\BC^2)$. On the other hand, the cohomology of $\FuM^\epsilon_n(\BC^2)$ will contain both  Nakajima-type classes as well as Fulton--MacPherson boundary classes. 

\subsection{Relations to other work} Our stability condition and the associated wall-crossing formula is a member of the vast family of related stabilities.  Among them, the theory of GIT quasimaps \cite{CFKM, CFK14, CFKmi} had the most significant impact on author's understanding of this phenomenon. Consequently, many of our techniques are also imported from there, the most profound of which is Zhou's theory of entangled tails \cite{YZ}, which allows to prove the wall-crossing formulas. We refer to  \cite[Section 1.4]{NuG} for a non-exhaustive list of situations in which such stabilities arise. 

The closest relative of the stability condition considered in this work is Hassett's weighted stable curves \cite{Has}. In fact, in dimension one, our stability essentially specialises to Hassett's. In higher dimensions, there exists a more direct generalisation  of  Hassett's stability, considered in \cite{Rou}. We state a wall-crossing formula for this stability too in Section \ref{secnfold}. It provides a useful source of constraints on integrals of $\psi$-classes over $\FuM_{[n]}(X)$, which we use for computations in Section \ref{sectioncomp}. 

Perhaps the  result that is closest in spirit to our wall-crossing formula is seemingly unrelated Gromov--Witten/Hurwitz correspondence of Okounkov--Pandharipande \cite{OPcom}. There are good reasons for this. Firstly, it can be proved by exactly the same techniques, as is shown in \cite{NSc}. Secondly, it has the same structure, even though it might be not so evident from the angle of Okounkov--Pandharipande's formulation. More precisely, it expresses Gromov--Witten invariants of a curve $C$ in terms of Hurwitz invariants of $C$ and Hodge integrals, which are completed cycles in disguise.  Hodge integrals can in turn be viewed as equivariant Gromov--Witten invariants of the affine line $\BC$, which is exactly the $I$-function of the Gromov--Witten/Hurwitz correspondence. The correspondence can therefore be interpreted as

\[ \text{GW($C$)}=\text{H($C$})+ \GW(\BC). \]
 On the other hand, our wall-crossing formula reads
 \[\text{Hilb}(X)=\text{FM}(X)+\Hilb(\BC^d). \]
 
Of course, the major difference is that the relevant Hodge integrals can be given a very explicit  form  in terms of completed cycles. This is arguably harder for Hilbert schemes. Certain $K$-theoretic variants of the tautological integrals were computed in \cite{Arb, Arb2, WZ} for $\Hilb_n(\BC^2)$. 
\subsection{Acknowledgements} I thank Donggun Lee and Noah Arbesfeld for useful discussions on related topics, and Rahul Pandharipande for drawing my attention to \cite{CGK}. The help of Yannik Schuler with computer calculations is greatly appreciated. 

Parts of this work were written during my stay in KIAS, Korea. I am grateful to KIAS for their hospitality  and  Hyeonjun Park for the invitation. Moreover, it is impossible to overestimate the role that the ideas of the prominent member of KIAS, Bumsig Kim, and his collaborators played in this work. Not only he and his collaborators laid down the foundations of quasimaps and their wall-crossing, but he was also among the first ones to study Fulton--MacPherson spaces in the context of  enumerative geometry. In many ways, this work is an upside-down panorama of Kim's discoveries.
 
  This research was supported by ERC grant “Refined invariants in combinatorics, low-dimensional topology and
  geometry of moduli spaces” No. 101001159 at University of Vienna, and  by a Hermann-Weyl-instructorship from the
  Forschungsinstitut f\"ur Mathematik at ETH Z\"urich.

\section{Moduli spaces of $\epsilon$-weighted subschemes}
\subsection{Preliminaries} Throughout the article, $X$ is a smooth quasi-projective  variety over the field of complex numbers $\BC$. We allow $X$ to carry an action of a torus $T=(\BC^*)^k$, requiring that $X^T$ is projective. If $X$ does not carry an action of $T$, then $X^T:=X$.  We will denote its $T$-equivariant cohomology as follows,
\[ H^*(X):=H^*_T(X,\BQ). \]
For the most part, we will assume that $X$ is  non-equivariant and make remarks about the equivariant case when necessary. 

Various spaces of ordered and unordered points will feature in this article. We reserve the notation $[n]$ for the set $\{1,\dots, n\}$ and use it whenever ordered points are considered. On the other hand, $n$ denotes an integer and is used in the case of unordered points. 
	\subsection{Fulton--MacPherson degenerations} \label{FMdeg} We start with the recollection of Fulton--MacPherson compactifications and degenerations, the main references for which are \cite{FM} and \cite{KKO}.  
	Consider the Fulton--MacPherson compactification of configuration spaces of points $\FuM_{[n]}(X)$, constructed in \cite{FM}, as an inductive blow-up. 
	 Let 
	 \[p_n \colon \CW \rightarrow \FuM_{[n]}(X) \quad \text{and} \quad q_{n } \colon \CW \rightarrow  X\]
	 be  the universal family\footnote{Note that $\FuM_{[n]}(X)$ is not constructed as a moduli space, hence, to be precise, $\CW$ is not a universal family. Nevertheless, we will refer to it as a universal family, slightly abusing the terminology.} of  $\FuM_{[n]}(X)$ and the associated universal contraction, also constructed in \textit{loc.\ cit.}
	 
	 \begin{defn}A pair consisting of a variety and a map, 
	 \[ (W, q \colon W \rightarrow X),\] is a Fulton--MacPherson (FM) degeneration  of $X$, if it is isomorphic to a  fiber of $p_{n}$ over a closed point in $\FuM_{[n]}(X)$ for some $n$. An end component of $W$ is a component whose valency in the intersection graph\footnote{The graph whose vertices are irreducible components of a variety, such that there is an edge between two vertices, if the corresponding components intersect. } is equal to $1$, except if it is a blow-up of $X$; it is isomorphic to $\p^d$. A ruled component is a component of valency 2, except if it is a blow-up of $X$; it is isomorphic to $\mathrm{Bl}_0(\p^d)$.  
	 \end{defn}
 
  Alternatively, a FM degeneration
	 $W$ can be constructed as a central fiber of an iterated  blow-up of $X\times \BC$, such that the centres of blow-ups are points in the regular locus of the central fiber of the previous blow-up. An example of a FM degeneration is
	 \[ W=\mathrm{Bl}_p(X) \sqcup_{E=D}\p^d,\]
	 where we attach two components at the exceptional divisor $E\subset  \mathrm{Bl}_p(X)$  and a divisor at infinity $D \subset \p^d$. If $d=1$, $W$ is a curve with $\p^1$-bubbles. Very often, we will refer to components of $W$, which are not blow-ups of $X$, as bubbles. 
	   Automorphisms of a FM degeneration are automorphisms of $(W,q)$ which fix  $q \colon W \rightarrow X$. We will denote the group of automorphisms of $(W,q)$ simply by $\Aut(W)$.
	\\ 
	
	An algebraic stack of FM degenerations  is constructed as a groupoid in \cite[Section 2.8]{KKO},
	\[ \CF\CM(X):=[\FuM_{[n,n]}(X) \rightrightarrows \FuM_{[n]}(X)],\]
	where $\FuM_{[n,n]}(X)$ is the FM compacitification with two sets of markings, while maps are given by forgetting any of these two sets of markings. This groupoid identifies stable marked FM degenerations which have different markings but the same underlying FM degeneration. Strictly speaking, by its definition,  $\CF\CM(X)$ parametrises FM degenerations with at most $n$ irreducible components. Since all of our moduli spaces are bounded, we will always assume that $n$ in the definition of $\CF\CM(X)$ is just large enough. 
	
	Let $\CF\CM_{[m]}(X)$ be a stack of FM degenerations with ordered $m$ markings, $(W,\underline{p}):=(W,p_1,\dots, p_m)$. The space $\FuM_{[m]}(X)$ is contained in  $\CF\CM_{[m]}(X)$,
	\[ \FuM_{[m]}(X) \subset \CF\CM_{[m]}(X),\] 
	as on open subspace of stable marked FM degenerations, i.e.\ $(W,\underline{p}) \in \FuM_{[m]}(X)$, if 
	\begin{enumerate}
		\item $\underline{p}$ is contained in the smooth locus of $W$, 
		\item $\Aut(W,\underline{p})$ is finite\footnote{This implies that $\Aut(W,\underline{p})$ is in fact trivial. },
			\item $p_i\neq p_j$, if $i\neq j$.
		\end{enumerate}
	
	Since $\FuM_{[n]}(X)$ was constructed via blow-ups, a clarification is needed here: by imposing the above stability, we obtain an open algebraic subspace $\FuM_{[m]}(X)'$ of $\CF\CM_{[m]}(X)$. By the definition of $\CF\CM_{[m]}(X)$, we have a map $\FuM_{[m]}(X)\rightarrow \FuM_{[m]}(X)'$, which is clearly bijective on geometric points. Since both spaces are smooth, by Zariski's main theorem, we obtain that the map must be an isomorphism. 
	 
	 \subsection{Stability}
	 In what follows, $W^{\mathrm{sm}}$ denotes the smooth locus of a FM degeneration $W$, $\ell_x(Z)$ is the length of a zero-dimensional subscheme $Z$ at a point $x \in W$, and $\ell(Z)$ is the total length of the subscheme. 
	 
	 \begin{defn}
	 	Given $\epsilon \in \BR_{>0}$. Let $W$ be a FM degeneration together with a zero-dimensional subscheme $Z\subset W$. The pair $(W,Z)$  is said to be $\epsilon$-weighted, if 
	 	\begin{enumerate}
	 		\item  $Z \subset W^{\mathrm{sm}}$, 
	 		\item  for all $x \in W$, $\ell_x(Z)\leq 1/\epsilon$,
	 		\item  for all end components $\p^{d} \subset W$, $\ell(Z_{|\p^{d}})>1/\epsilon$, 
	 		\item  the group $\{g \in \Aut(W) \mid g^*Z=Z\}$ is finite.
 		\end{enumerate}	
 	 \end{defn}
 	
 	\subsection{Moduli spaces}
 	
 	For each $\epsilon \in \BR_{>0}$, we define the space of $\epsilon$-weighted pairs $(W,Z)$,
 	\[ \FuM^{\epsilon}_n(X):=\{\epsilon\text{-weighted} \ (W,Z) \mid \ell(Z)=n\} / \sim,\] where $``\sim"$ denotes that subschemes are identified by automorphisms of FM degenerations. 
 	 More precisely,  the space $\FuM^{\epsilon}_n(X)$ is a substack of the relative Hilbert scheme of zero-dimensional subschemes of the universal family $\CW \rightarrow \CF\CM(X)$, 
 	\[  \FuM^{\epsilon}_n(X) \subseteq \Hilb_n(\CW/\CF\CM(X)). \]
 	In particular, a family of pairs in $\FuM^{\epsilon}_n(X)$ is defined via the family of objects in $\Hilb_n(\CW/\CF\CM(X))$. 
 	It is not difficult to see that $\epsilon$-weightedness is an open condition, because the length of a point and the total length on an end component can only increase and decrease under specialisations, respectively. Hence $\FuM^{\epsilon}_n(X)$ is in fact an open substack. In particular, it is algebraic and quasi-separated, since $\Hilb_n(\CW/\CF\CM(X))$ is so by \cite[\href{https://stacks.math.columbia.edu/tag/0DLX}{Section 0DLX}]{stacks-project} and the representability of $\CW \rightarrow \CF\CM(X)$.  Moreover, by the condition (4) of $\epsilon$-weightedness, it is Deligne--Mumford and of finite type.
 	\\
 	 	
 	 If $\epsilon \leq 1/n$, the stability prohibits all end components, hence $\FuM^\epsilon_n(X)$ parametrises zero-dimensional subschemes of length $n$ on $X$, 
 	\[ \FuM^\epsilon_n(X)=\Hilb_n(X), \quad \text{if } \epsilon \leq 1/n.\] 
 	If $\epsilon=1$, all subschemes are of length 1 at all points of their support, hence $\FuM^\epsilon_n(X)$  parametrises $n$-tuples of distinct unordered points on FM degenerations of $X$,  
 	\[ \FuM^\epsilon_n(X)=\FuM_n(X), \quad \text{if } \epsilon=1.\]
 	If $\epsilon>1$, then $\FuM^\epsilon_n(X)$ is empty, 
 	\[ \FuM^\epsilon_n(X)=\emptyset , \quad \text{if } \epsilon >1.\]
 	For values of $\epsilon$ between the extremal ones, a pair $(W,Z)$ in $\FuM^\epsilon_n(X)$ might have both end components at $W$ and non-reducedness at $Z$. 
 	 \subsection{Marked $\epsilon$-weighted subschemes}
 	 We will need a more general version of $\FuM^\epsilon_n(X)$ which parametrises $\epsilon$-weighted $0$-subschemes together with marked points which are not affected by the $\epsilon$-weightedness.  
 	\begin{defn} \label{defnweight}
 		Given $\epsilon \in \BR_{>0}$.  Let $W$ be a FM degeneration together with a zero-dimensional subscheme $Z\subset W$ and $m$ distinct ordered point $\underline{p}:=\{p_1,\dots,p_m\}$. The triple $(W,Z,\underline{p})$ is $\epsilon$-weighted, if 
 		\begin{enumerate} 
 				\item $\underline{p} \cap Z=\emptyset$,
 			\item $\underline{p} \subset W^{\mathrm{sm}}$ and $Z\subset W^{\mathrm{sm}}$, 
 			\item for all $x \in W$, $\ell_x(Z)\leq 1/\epsilon$,
 			\item  for all end components  $\p^{d} \subset W$, such that $\underline{p} \cap \p^{d}=\emptyset$,   $\ell(Z_{|\p^{d}})>1/\epsilon$, 
 			\item the group $\{g \in \Aut(W, \underline{p}) \mid g^*Z=Z\}$ is finite.
 		\end{enumerate} 	
 	\end{defn}
 	
 	For each $\epsilon \in \BR_{>0}$, we define the space of $\epsilon$-weighted marked subschemes
 	\[ \FuM^{\epsilon}_{n,[m]}(X):=\{\epsilon\text{-weighted } (W,Z,\underline{p}) \mid  \ell(Z)=n, |\underline{p}|=m\} / \sim.\] 
 The space $\FuM^{\epsilon}_{n,[m]}(X)$ is a substack of the relative Hilbert scheme of zero-dimensional subschemes of $\CW \rightarrow \CF\CM_{[m]}(X)$, 
 	\[  \FuM^{\epsilon}_{n,[m]}(X) \subseteq \Hilb_n(\CW/\CF\CM_{[m]}(X)).\]
  As before, for the same reasons,  $\FuM^{\epsilon}_{n,[m]}(X)$ is a quasi-separated Deligne--Mumford stack of finite type.  
 	These spaces clearly specialise to  $\FuM^{\epsilon}_{n}(X)$ by setting $m=0$, and to $\FuM_{[m]}(X)$ by setting $n=0$. In fact, they will play a central role in the wall-crossing formulas. Even if we want to compare just  $\FuM_{n}(X)$ and $\Hilb_n(X)$, spaces $\FuM^{\epsilon}_{n,[m]}(X)$ will be necessary. 
 	
 For all $\epsilon$, $n$ and $m$, spaces $\FuM^{\epsilon}_{n,[m]}(X)$ contain partially ordered configuration spaces of points, 
 \[C_{[n+m]}(X)/S_{n}, \]
 as open subschemes. Note that the points $\underline{p}$ are ordered, while supports of subschemes $Z$ are not. For this reason, we quotient out the order only on first $n$ points in the configuration space above. 
 
 	If a torus $T$ acts on $X$, then it also acts on $\FuM_{[n]}(X)$, such that the action descends to $\CF \CM_{[m]}(X)$. It therefore also induces an action on $\FuM^{\epsilon}_{n,[m]}(X)$.

 	
 	\subsection{Properness}
 		
 		\begin{prop} 
 		If $X^T$ is projective, then the torus-fixed locus $\FuM^{\epsilon}_{n,[m]}(X)^T$ is proper. 
 		\end{prop}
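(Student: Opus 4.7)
\textbf{The plan} is to apply the valuative criterion of properness to the quasi-separated Deligne--Mumford stack of finite type $\FuM^{\epsilon}_{n,[m]}(X)^T$. \emph{Separatedness} is inherited from the open inclusion $\FuM^{\epsilon}_{n,[m]}(X) \hookrightarrow \Hilb_n(\CW/\CF\CM_{[m]}(X))$: the base groupoid $\CF\CM_{[m]}(X)$ is built from the separated Fulton--MacPherson spaces $\FuM_{[n']}(X)$, and relative Hilbert schemes of $0$-subschemes preserve separatedness; passing to the $T$-fixed locus preserves this.

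For the existence part, let $R$ be a DVR with fraction field $K$ and residue field $k$, and let $(W_K, Z_K, \underline{p}_K)$ be a $T$-fixed $K$-point. The $T$-fixedness provides, for each $t \in T$, an automorphism $\phi_t \colon W_K \to W_K$ covering $t \colon X \to X$ that fixes $Z_K$ and $\underline{p}_K$. Since $T$ is connected, this forces the bubble root points of $W_K$ together with the images $q_K(\mathrm{supp}(Z_K))$ and $q_K(\underline{p}_K)$ to lie in $X^T$: otherwise the $T$-orbit of a non-fixed point would yield a positive-dimensional family of blow-up centers or marked points, which is absurd as these are discrete. Consequently, the underlying $0$-cycle $q_*(Z_K + \sum_i p_{i,K})$ is a $K$-point of $\Sym^{n+m}(X^T)$, and projectivity of $X^T$ allows it to be extended uniquely (after a finite ramified base change) to an $R$-family of $0$-cycles, producing a limit $\sum_\alpha n_\alpha x_\alpha$ with $x_\alpha \in X^T(k)$.

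The extension of the FM structure and the subscheme around each $x_\alpha$ is controlled by a local moduli of $T$-equivariant $\epsilon$-weighted bubble--subscheme pairs on a formal neighborhood of $x_\alpha$, with $T$ acting via $T_{x_\alpha} X$. By condition~(3), every end component of $W$ must carry length $>1/\epsilon$ of $Z$, so the combinatorial type of bubbling at $x_\alpha$ is bounded by $n$; for each fixed type, the local moduli is a quotient of a projective $T$-fixed subscheme of a Hilbert scheme on a bounded bubble configuration, with the continuous rescaling automorphisms of each bubble rigidified by $\epsilon$-stability and condition~(4). These local proper pieces glue, via the cycle data from the previous step, to a $T$-fixed $R$-extension $(W_R, Z_R, \underline{p}_R)$.

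\textbf{The main obstacle} is the stabilization step: the naive flat limit in the ambient Hilbert stack may violate (2) (too much length concentrating at a point) or (3) (insufficient length on an end component of the special fiber), so one must replace it by its unique $\epsilon$-stable representative via a contraction/expansion procedure analogous to the stabilization of FM degenerations in \cite{FM} and to the entangled-tails constructions in the quasimap literature \cite{CFKM, YZ}. Condition~(4), combined with $\epsilon$-weightedness~(2)--(3), will ensure that this replacement is canonical, preserves $T$-equivariance, and produces an $R$-point of $\FuM^{\epsilon}_{n,[m]}(X)^T$ whose generic fiber recovers the given $K$-point, completing the valuative criterion.
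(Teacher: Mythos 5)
There are two genuine gaps. First, your separatedness argument does not work: the ambient stack $\Hilb_n(\CW/\CF\CM_{[m]}(X))$ is \emph{not} separated, because the base $\CF\CM_{[m]}(X)$ of all (unstabilized) FM degenerations is not separated --- exactly as the stack of prestable curves is not: a family can limit either to the unbubbled fiber or to any bubbled modification of it. The paper only claims quasi-separatedness of $\FuM^{\epsilon}_{n,[m]}(X)$, and the uniqueness half of the valuative criterion has to be extracted from the $\epsilon$-stability itself: the stabilized limit $(W',\underline{x},\underline{p})$ is the unique marked FM degeneration satisfying both the end-component inequality $\sum n_{i_j}>1/\epsilon$ and the pointwise bound $\sum n_{i_j}\leq 1/\epsilon$, since any other candidate differs by blow-ups/blow-downs that violate one of the two; uniqueness of the subscheme then follows from properness of $\Hilb_n(\CW'/\Delta)$ over the now-fixed total family. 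You cannot get this for free from an open immersion.

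Second, your existence step never actually constructs the limiting FM degeneration. Passing to the $0$-cycle $q_*(Z_K+\sum p_{i,K})$ in $\Sym^{n+m}(X^T)$ and extending it there discards precisely the information that determines the bubbling, namely the relative rates at which the supports collide; the subsequent appeal to ``local moduli of $T$-equivariant bubble--subscheme pairs'' that ``glue'' is not a construction, and you flag the stabilization step as the main obstacle without carrying it out. The paper's route is concrete: replace $\CZ^\circ$ by its reduced support $\underline{x}^\circ=\{x_1^\circ,\dots,x_k^\circ\}$ with multiplicities $n_i=\ell_{x_i^\circ}(\CZ^\circ)\leq 1/\epsilon$, so that $(\CW^\circ,\underline{x}^\circ,\underline{p}^\circ)$ is a point of the \emph{proper} space $\FuM_{[k+m]}(X)$; take its limit there (this is where the bubble structure of the central fiber is produced, keeping the $x_i$ distinct); then contract exactly those end components whose total multiplicity is $\leq 1/\epsilon$, checking that the pointwise bound is preserved; and finally extend $\CZ^\circ$ over the resulting projective family $\CW'\rightarrow\Delta$ using properness of the relative Hilbert scheme. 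Reduction to $X^T$ and projectivity of $X^T$ enter only to ensure one may assume $X$ projective; they are not a substitute for the FM-limit-plus-contraction mechanism, which is the actual content of the proof and is missing from your proposal.
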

 	
 		\begin{proof} We will assume that $X$ is projective and non-equivariant. Since $\FuM^{\epsilon}_{n,[m]}(X)^T$ consists of points supported over $X^T$,  the arguments easily extend to the non-projective case. 
 			
 			We use the valuative criteria of properness for quasi-separated Deligne--Mumford stacks of finite type. Let $\Delta$ be the spectrum of a discrete valuation ring, let $\Delta^\circ \in \Delta$ and $0\in \Delta$ be its generic and closed points, respectively. Let $(\CW^\circ,\CZ^\circ,\underline{p}^\circ)$ be a family of $\epsilon$-weighted triples over $\Delta^\circ$. By taking the associated reduced subscheme of $\CZ^\circ$, we obtain $k$ distinct $\Delta^\circ$-points of $\CW^\circ$,
 			\[\CZ^\mathrm{red,\circ}=\underline{x}^\circ=\{x^\circ_1, \dots, x^\circ_k\} \subset \CW^\circ,\]
 			which we order and  to which we attach multiplicities 
 			\[\{n_1, \dots, n_k\} \subset (\BZ_{>0})^k,\] 
 			defined as $n_i:=\ell_{x^\circ_i}(\CZ^\circ)$. By the $\epsilon$-weightedness of $\CZ^\circ$, we have 
 			\begin{equation*} \label{eqweight}
 			n_i\leq 1/\epsilon. 
 			\end{equation*}

 			Since $\CZ^\circ \cap \underline{p}^\circ=\emptyset$, the triple $(\CW^\circ,\underline{x}^\circ,\underline{p}^\circ)$ defines a $\Delta^\circ$-point of $\FuM_{[k+m]}(X)$. By the properness of $\FuM_{[k+m]}(X)$, there exists an extension $(\CW, \underline{x},\underline{p})$ of $(\CW^\circ,\underline{x}^\circ,\underline{p}^\circ)$ over $\Delta$, such that the central fiber $(\CW_0, \underline{x}_0,\underline{p}_0)$ is a stable marked FM degeneration. Let $\p^{d} \subset \CW_0$ be an end component, such that $  \underline{p}_0 \cap  \p^{d} =\emptyset$, and let $(x_{i_1,0}, \dots, x_{i_\ell,0})\subset \underline{x}_0$ be the subset of $\underline{x}_0$ contained in $\p^{d}$. If  
 			\begin{equation} \label{eq1}
 				\sum^{\ell}_{j=1} n_{i_j}>1/\epsilon,
 			\end{equation}
 		we leave $\p^d$ intact. Otherwise, we contract it, which is possible, because in this case $\p^d$ cannot be a limit of an end component in the generic fiber by the $\epsilon$-weightedness of the generic fiber, we refer to \cite[Section 2.2]{NuG} for contractions and blow-ups of FM degenerations.  We  thereby obtain a stable FM degeneration, such that markings $\underline{x}$ might overlap. We then repeat the procedure for all end components of this new FM degeneration and of its further contractions. In this way, we obtain a stable FM degeneration $(W', \underline{x},\underline{p})$ with possibly overlapping markings $\underline{x}$, such that all end components $\p^{d} \subset \CW'_0$ satisfy (\ref{eq1}), while the total multiplicity of possibly overlapping markings at each point of $W'_0$ satisfy
 		\begin{equation} \label{ineq}
 		\sum n_{i_j}\leq 1/\epsilon
 		\end{equation}
 		by the choice of the contracted components and the $\epsilon$-weightedness of the generic fiber.
 		Since $\CW' \rightarrow \Delta$ is projective,  the relative Hilbert scheme of zero-dimensional subschemes $\Hilb_n(\CW'/\Delta)$ is proper. Hence there exists an extension $\CZ \subset \CW'$ of $\CZ^\circ \subset \CW^\circ$ over  $\Delta$.  By the construction of $\CW'$ and the deformation invariance of the total length of subschemes, the central fiber $(\CW_0,\CZ_0,\underline{p}_0)$ is $\epsilon$-weighted. This shows the existence part. 
 		
 		The uniqueness part follows from the uniqueness of the FM degeneration $(W',\underline{x},\underline{p})$ and the properness of $\Hilb_n(\CW'/\Delta)$. Indeed, $(W',\underline{x},\underline{p})$ is uniquely characterised by stability and  the inequality $(\ref{ineq})$, since another triple  $(W'',\underline{x},\underline{p})$ would be related to $(W',\underline{x},\underline{p})$ by  blow-ups and blow-downs, which either introduce unstable components or violate the inequality (\ref{ineq}). 
 		\end{proof}
 		
 \subsection{Obstruction theory}
  
 	Since   $\FuM^{\epsilon}_{n,[m]}(X)$ is an open substack of the relative Hilbert scheme $\Hilb_n(\CW/\CF\CM_{[m]}(X))$, there exists a natural obstruction theory relative to $\CF\CM_{[m]}(X)$ given by the obstruction theory of ideal sheaves. Let  
 	\begin{align*}
 	 &\CI \rightarrow \CW\times_{\CF\CM_{[m]}(X)} \FuM^{\epsilon}_{n,[m]}(X), \\
 	 & p_{\CW} \colon \CW\times_{\CF\CM_{[m]}(X)}\FuM^{\epsilon}_{n,[m]}(X)  \rightarrow  \FuM^{\epsilon}_{n,[m]}(X), 
 	 \end{align*}
 	be the universal ideal sheaf associated to the universal subscheme $\CZ$ and the canonical projection, respectively. Since for every FM degeneration $(W,q \colon W \rightarrow X)$, we have
 	\[ Rq_*\CO_W=\CO_X,\]
 	there is a natural identification, 
 	\begin{equation}\label{trace}
 	Rp_{\CW*}(\CO_{\CW\times_{\CF\CM_{[m]}(X)}\FuM^{\epsilon}_{n,[m]}(X)})= H^*(\CO_X)\otimes \CO_{\FuM^\epsilon_{n,[m]}(X)}. 
 	\end{equation}
 	Consider the following complex, 
 	
 	\begin{align*}
 	& \BT^{\mathrm{vir}}_{\FuM_{n,[m]}^\epsilon(X)/\CF\CM_{[m]}(X)}:=R\CHom_{p_\CW}(\CI,\CI)_0=\mathrm{cone}(\tr), \\
 	& \tr\colon R\CHom_{p_\CW}(\CI,\CI) \rightarrow H^*(\CO_X)\otimes \CO_{\FuM^\epsilon_{n,[m]}(X)},
 	\end{align*}
 where $\mathrm{tr}$ is the trace morphism for which we used the identification (\ref{trace}). 
 	
 	\begin{prop} The complex $\BT^{\mathrm{vir}}_{\FuM_{n,[m]}^\epsilon(X)/\CF\CM_{[m]}(X)}$ defines a relative obstruction theory, 
 		\[\phi\colon \BT_{\FuM_{n,[m]}^\epsilon(X)/\CF\CM_{[m]}(X)} \rightarrow \BT^{\mathrm{vir}}_{\FuM_{n,[m]}^\epsilon(X)/\CF\CM_{[m]}(X)},  \]
 		which is perfect, if $d\leq 3$. 
 	\end{prop}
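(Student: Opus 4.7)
The plan is to follow Huybrechts--Thomas's construction of the obstruction theory for moduli of ideal sheaves, adapted to the relative setting over the stack $\CF\CM_{[m]}(X)$. The morphism $\phi$ is obtained from the relative Atiyah class
\[\mathrm{At}_{\mathrm{rel}}(\CI)\in \Ext^1\bigl(\CI,\,\CI\otimes p_\CW^*\BL_{\FuM^\epsilon_{n,[m]}(X)/\CF\CM_{[m]}(X)}\bigr)\]
of the universal ideal sheaf. Extracting its trace-free component via the identification (\ref{trace}) and adjoining along $p_\CW$ using Grothendieck--Verdier duality yields, after the standard shift, the desired morphism $\phi$.

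To verify that $\phi$ is an obstruction theory, I would use that $\FuM^\epsilon_{n,[m]}(X)$ is an open substack of the relative Hilbert scheme $\Hilb_n(\CW/\CF\CM_{[m]}(X))$, reducing the statement to the ambient moduli space. For any square-zero extension $T\hookrightarrow \bar T$ over $\CF\CM_{[m]}(X)$ with ideal $J$, the classical Illusie--Huybrechts--Thomas deformation theory of ideal sheaves identifies the set of lifts of $I_Z\subset \CO_{W_T}$ to $\CO_{W_{\bar T}}$ with a torsor under $\Ext^1_{W_T}(I_Z,\,I_Z\otimes J)_0$, and the obstruction class with an element of $\Ext^2_{W_T}(I_Z,\,I_Z\otimes J)_0$, matching the tangent--obstruction data predicted by $\phi$. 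The hypothesis $\CZ\subset \CW^{\mathrm{sm}}$, forced by $\epsilon$-weightedness, ensures that the singularities of $\CW$ away from $\CZ$ play no role in this analysis.

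For perfectness when $d\leq 3$, the same hypothesis $\CZ\subset \CW^{\mathrm{sm}}$ shows that $\CI$ is a perfect complex of Tor-amplitude $[-d,0]$ on $\CW\times_{\CF\CM_{[m]}(X)}\FuM^\epsilon_{n,[m]}(X)$, so $R\CHom_{p_\CW}(\CI,\CI)_0$ is perfect on $\FuM^\epsilon_{n,[m]}(X)$ by the flatness and projectivity of $p_\CW$. A fiberwise amplitude check gives $\Ext^0(I_Z,I_Z)_0=0$ by simplicity of $I_Z$, and $\Ext^i(I_Z,I_Z)=0$ for $i>d$ since $I_Z$ has projective dimension $\leq d$ near its cosupport. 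For $d=3$, Serre duality on the projective Gorenstein degeneration $W$ gives $\Ext^3(I_Z,I_Z)_0\cong \Hom(I_Z,I_Z\otimes\omega_W)_0^\vee$, which vanishes by the standard trace argument, as the relevant local Ext sheaves are supported in $W^{\mathrm{sm}}$. Hence the complex is fiberwise concentrated in degrees $[1,\max(1,d-1)]\subseteq[1,2]$, i.e.\ it is a two-term perfect complex.

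The main obstacle is carrying out the relative Serre-duality step for $d=3$ when the total space $\CW$ is singular. This is resolved by the facts that FM degenerations have normal-crossings (hence Gorenstein) singularities, so a relative dualizing sheaf exists, and that the relevant local Ext sheaves are supported inside the relative smooth locus of $\CW$, where the classical vanishing argument transfers without modification.
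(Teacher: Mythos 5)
Your argument is essentially the paper's own proof written out in detail: the paper simply states that the result is a relative version of Thomas's construction (citing \cite{Th} and \cite{HuT}), and your Atiyah-class construction of $\phi$, the torsor/obstruction description of deformations of ideal sheaves, and the vanishing of traceless $\Ext^0$ and $\Ext^3$ (the latter via Serre duality on the Gorenstein FM degeneration, using $\CZ\subset\CW^{\mathrm{sm}}$) are exactly the content of those references adapted to the relative setting over $\CF\CM_{[m]}(X)$. The proposal is correct and follows the same route.
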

 
 \begin{proof} It is a relative version of \cite{Th}, see also \cite{HuT}. A more modern approach is provided by the derived algebraic geometry \cite{TV2,Lur}.  
 	\end{proof}
 
 Since $\CF\CM_{[m]}(X)$ is smooth, we obtain an absolute obstruction theory, defined by the following triangle, 
 \begin{equation} \label{cotangent}
 \BT^{\mathrm{vir}}_{\FuM_{n,[m]}^\epsilon(X)/\CF\CM_{[m]}(X)}  \rightarrow \BT^{\mathrm{vir}}_{\FuM_{n,[m]}^\epsilon(X)} \rightarrow \BT_{\CF\CM_{[m]}(X)} \rightarrow.
 \end{equation}
 This furnishes $\FuM_{n,[m]}^\epsilon(X)$ with a virtual fundamental class \cite{BF}, if $d\leq 3$, 
 \[ [\FuM_{n,[m]}^\epsilon(X)]^{\mathrm{vir}} \in H_*(\FuM_{n,[m]}^\epsilon(X)).\]
 By the results in the next section, this class coincides with the standard fundamental class, if $d\leq 2$. 

  \subsection{Smoothness and connectedness}
 
 \begin{prop}
  If $d\leq2$, then $\FuM_{n,[m]}^\epsilon(X)$ is smooth and connected of dimension $d(n+m)$, and 
   \[\BT^{\mathrm{vir}}_{\FuM_{n,[m]}^\epsilon(X)}\cong T_{\FuM_{n,[m]}^\epsilon(X)}. \]
 \end{prop}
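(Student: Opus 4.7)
At a geometric point $(W, Z, \underline p)$, the fiber of the relative virtual tangent $\BT^{\mathrm{vir}}_{\FuM^\epsilon_{n,[m]}(X)/\CF\CM_{[m]}(X)}$ is the traceless complex $R\Hom_W(I_Z, I_Z)_0$. Because $Z$ is zero-dimensional and lies in $W^{\mathrm{sm}}$, a smooth variety of dimension $d \leq 2$, the sheaf $I_Z$ agrees with $\CO_W$ away from $\operatorname{supp}(Z)$, so the computation of $\Ext^i(I_Z, I_Z)_0$ for $i \geq 2$ reduces to a local one around the support: it vanishes for $d = 1$ by dimension and for $d = 2$ by the local version of Fogarty's vanishing $\Ext^2(I_Z, I_Z)_0 = 0$ underlying smoothness of $\Hilb_n$ of a surface. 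Hence the relative virtual tangent is locally free of rank $\dim \Ext^1(I_Z, I_Z)_0 = dn$, and combined with smoothness of $\CF\CM_{[m]}(X)$ and the triangle \eqref{cotangent}, the absolute virtual tangent is locally free of rank $d(n+m)$. The obstruction-theory morphism $\phi$ then forces $T_{\FuM^\epsilon_{n,[m]}(X)}$ to be locally free of the same rank, yielding smoothness of dimension $d(n+m)$ together with $\BT^{\mathrm{vir}} \cong T_{\FuM^\epsilon_{n,[m]}(X)}$.

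\textbf{Connectedness.} The previous step supplies a short exact sequence of tangent spaces
\[
0 \to \Ext^1(I_Z, I_Z)_0 \to T\FuM^\epsilon_{n,[m]}(X) \to T\CF\CM_{[m]}(X) \to 0,
\]
so the projection $\pi\colon \FuM^\epsilon_{n,[m]}(X) \to \CF\CM_{[m]}(X)$ is smooth. Let $U = \pi^{-1}(C_{[m]}(X))$ be the open locus where $W = X$. Through $\pi$, it identifies with the open substack of $\Hilb_n(X) \times X^m$ defined by $p_i \neq p_j$, $\underline p \cap Z = \emptyset$, and $\ell_x(Z) \leq 1/\epsilon$; each removes a closed subset of codimension $\geq 1$ from the ambient, which is irreducible for $d \leq 2$ (Fogarty for $d=2$ and $\Sym^n(X)$ for $d=1$), so $U$ is irreducible, non-empty, and connected. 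Now any irreducible component $V_i$ of $\FuM^\epsilon_{n,[m]}(X)$ is smooth of dimension $d(n+m)$, and its image $\pi(V_i)$ is an irreducible closed subset of $\CF\CM_{[m]}(X)$; since every non-empty fiber of $\pi$ is an open subscheme of $\Hilb_n(W^{\mathrm{sm}})$ of pure dimension $dn$, the fiber-dimension inequality forces $\dim \pi(V_i) \geq d(n+m) - dn = dm = \dim \CF\CM_{[m]}(X)$, and irreducibility of the target gives $\pi(V_i) = \CF\CM_{[m]}(X)$. Therefore $V_i \cap U$ is a non-empty open subset of $V_i$, and connectedness of $U$ glues all components together, so $\FuM^\epsilon_{n,[m]}(X)$ is connected.

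\textbf{Main obstacle.} The only substantive input is the vanishing $\Ext^{\geq 2}(I_Z, I_Z)_0 = 0$ on the possibly singular FM degeneration $W$: one must exploit that $\operatorname{supp}(Z) \subset W^{\mathrm{sm}}$ in order to reduce the calculation to a smooth variety of dimension at most two. This is also the only place where the hypothesis $d \leq 2$ enters, and once it is settled both smoothness and the dimension count for connectedness follow formally.
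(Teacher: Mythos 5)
Your argument is correct, and the two halves compare differently with the paper. For smoothness and the identification $\BT^{\mathrm{vir}}\cong T$, you and the paper use the same key input (vanishing of traceless $\Ext^2$ for $Z$ supported in the smooth surface/curve locus of $W$ -- your remark that this is a local computation near $\operatorname{supp}(Z)$, via $\CE xt^{\geq 2}(I_Z,I_Z)=0$ on a smooth surface, is a clean way to handle the singular ambient $W$); you then conclude smoothness directly from unobstructedness, whereas the paper only records that the Zariski tangent spaces have constant dimension $d(n+m)$ and defers smoothness until after connectedness, via the closure $\CM$ of the configuration space and the observation that any second component meeting $\CM$ would force a tangent-space jump. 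For connectedness your route is genuinely different: the paper writes down explicit connecting families -- using Fogarty's connectedness to deform non-reduced subschemes on end components to reduced ones and then smoothing the bubbles inductively until one lands in $C_{[n+m]}(X)/S_n$ -- while you run a dimension count showing every component of $\FuM^\epsilon_{n,[m]}(X)$ dominates $\CF\CM_{[m]}(X)$ and therefore meets the irreducible locus $U$ over the trivial degeneration, with Fogarty entering only through irreducibility of $\Hilb_n$ of a surface. Your version is shorter but leans on two points you assert rather than prove: that the Artin stack $\CF\CM_{[m]}(X)$ is irreducible of dimension $dm$ (this again needs that every marked FM degeneration smooths, i.e.\ the same geometric input the paper uses, just pushed down to the base stack), and the fiber-dimension inequality for stacks, where one must take fibers over field-valued points so that the fiber really is an open subscheme of $\Hilb_n(W^{\mathrm{sm}})$ of pure dimension $dn$ rather than its quotient by $\Aut(W,\underline p)$. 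Also, $\pi(V_i)$ is constructible with dense image, not closed, but this does not affect the conclusion that $V_i\cap U\neq\emptyset$.
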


\begin{proof} 

We will establish the claims of the proposition in the reverse order. Firstly,  traceless $\Ext^2$ groups vanish for ideal sheaves of zero-dimensional subschemes on smooth surfaces and curves, and  $\BT^{\mathrm{vir}}_{\FuM_{n,[m]}^\epsilon(X)/\CF\CM_{[m]}(X)}$ is an obstruction-theory complex, thus we have 
\[
\BT^{\mathrm{vir}}_{\FuM_{n,[m]}^\epsilon(X)/\CF\CM_{[m]}(X)}
\cong T_{\FuM_{n,[m]}^\epsilon(X)/\CF\CM_{[m]}(X)},
\]
which by the triangle (\ref{cotangent}) and smoothness of $\CF\CM_{[m]}(X)$ then implies that $\BT^{\mathrm{vir}}_{\FuM_{n,[m]}^\epsilon(X)}\cong T_{\FuM_{n,[m]}^\epsilon(X)}$. Moreover, again by the vanishing of traceless $\Ext^2$ and smoothness of $\CF\CM_{[m]}(X)$,  we conclude that the Zariski tangent spaces of $\FuM_{n,[m]}^\epsilon(X)$ are of constant dimension equal to the dimension of the configuration space $C_{[n+m]}(X)/S_{n}$, i.e.\  $d(n+m)$. 

Consider now the closure of the configuration space $C_{[n+m]}(X)/S_{n}$ inside $\FuM_{n,[m]}^\epsilon(X)$, and denote it by $\CM$. If there was an another irreducible component of $\FuM_{n,[m]}^\epsilon(X)$  intersecting $\CM$, then the dimension of the Zariski tangent space of a point of intersection would be greater than  $d(n+m)$ contradicting the conclusion above. Hence all other irreducible components of $\FuM_{n,[m]}^\epsilon(X)$  must be disjoint from $\CM$. Moreover, $\CM$ must be smooth, since the dimension of its Zariski tangent spaces  are equal to its dimension. 


It remains to prove that $\FuM_{n,[m]}^\epsilon(X)$  is connected.  To see this, it is enough to show that for all $\epsilon$-weighted triples $(W,Z,\underline{p}) \in  \FuM_{n,[m]}^\epsilon(X)$, there exists a family over a connected base  which connects $(W,Z,\underline{p})$ with a point in $C_{[n+m]}(X)/S_{n} \subset \FuM_{n,[m]}^\epsilon(X)$.  This is achieved as follows. By the connectedness of Hilbert schemes on a smooth surface or a curve established in \cite{Fog}, we can deform any non-reduced subschemes on any end component $\p^d$ of $W$ to a  subscheme which is reduced on $\p^d$ (and still $\epsilon$-weighted). By the connectedness of Fulton--MacPherson spaces, we then can smooth out the chosen end component, such that the resulting triple $(W,Z,\underline{p})$ is also  $\epsilon$-weighted. We apply this procedure inductively to all end components, and then to all non-reduced subschemes supported on $X$. The resulting pair will be contained in $C_{[n+m]}(X)/S_{n} $. We conclude that  $\FuM_{n,[m]}^\epsilon(X)$ is connected, since $C_{[n+m]}(X)/S_{n} $ is connected. Hence  $\FuM_{n,[m]}^\epsilon(X)=\CM$, and therefore it is smooth of dimension $d(n+m)$. 
\end{proof}

A consequence of the proposition above is that   $\FuM_{n,[m]}^\epsilon(X)$ provides distinct\footnote{The fact that they are distinct requires an argument; this can be seen by contemplating what happens with the exceptional divisor of $\Hilb_n(X)$, when we vary $\epsilon$. If $n=2$ and $m=0$, they agree up to stabilizers. } smooth and connected compactifications of the configuration spaces of points $C_{[n+m]}(X)/S_{n}$, if $d\leq 2$. In higher dimensions, the geometry of $\Hilb_n(X)$ is notoriously complicated - one can essentially find any type of singularity inside them \cite{Jel}.  It is also true for $\FuM_{n,[m]}^\epsilon(X)$, as it inherits this complexity from  $\Hilb_n(X)$,  unless $n=0$ or $\epsilon =1$ (or if $\epsilon$ is very close to $1$). Hence moduli spaces $\FuM_{n}^\epsilon(X)$ interpolate between  highly singular spaces $\Hilb_n(X)$ and smooth orbifold spaces $\FuM_n(X)$ whose boundary is a normal crossing divisor.

If $d=3$, using Serre's duality, we can calculate the virtual dimension of spaces $\FuM_{n,[m]}^\epsilon(X)$, 
\[ \mathrm{vir.\dim}(\FuM_{n,[m]}^\epsilon(X))=m, \]
in particular, if $m=0$, then $\mathrm{vir.\dim}(\FuM_{n}^\epsilon(X))=0$.

\section{Tautological integrals}

\subsection{Descendents of Hilbert schemes} 
From now on, we assume that $d\leq 3$. We start with tautological classes of Hilbert-schemes type. Let 	
\[ \CI \rightarrow \CW \times_{\CF\CM_{[m]}(X)}  \FuM_{n,[m]}^\epsilon(X) \] 
be the universal ideal. By the definition of FM degenerations, we have a universal contraction to $X$, 
\[q_\CW \colon \CW \times_{\CF\CM_{[m]}(X)}  \FuM_{n,[m]}^\epsilon(X) \rightarrow X  \]
 For a class $\gamma \in H^*(X)$, we define 
\[ \tau_k(\gamma):=p_{\CW*}(\ch_{k}(\CI)\cdot q^*_\CW\gamma) \in H^*(\FuM_{n,[m]}^\epsilon(X)), \]
observe that for $\epsilon \leq 1/n$ and $m=0$, these are standard tautological classes on Hilbert schemes of points, usually referred to as descendents.  

\subsection{Descendents of Fulton--MacPherson spaces} \label{secdescFM}

Let us now consider the tautological classes of Fulton--MacPherson type. Firstly, we have another kind of projections given by composing the universal contraction $q_{\CW}$ and a universal section $s_j \colon \FuM^\epsilon_{n,[m]}(X) \rightarrow  \CW \times_{\CF\CM_{[m]}(X)}  \FuM_{n,[m]}^\epsilon(X)$, 

\begin{align*}
	\pi_j =q_{\CW} \circ s_j \colon \FuM_{n,[m]}^\epsilon(X)  \rightarrow X, \quad j=1,\dots, m.
\end{align*}
More importantly, we have $\psi$-classes, 
\[ \BL_{j}=s^*_jT_{p_\CW}^\vee,\quad  \psi_j:= (\mathrm{c}_1(\BL_j)-\pi_j^*\mathrm{c}_1(X))/d, \quad j=1,\dots,m. \]
We refer to Section \ref{comp} for more details about $\psi$-classes in our context. There are other ways of defining them, e.g.\ they can be expressed in terms boundary divisors,  
\[\psi_j= \sum_{j \in J} D_J. \]

There exists another natural family of classes, 
\[ \tilde{\psi}_j:=\mathrm{c}_{d}(\BL_j)=\sum^{d}_{\ell=0}(-1)^\ell \psi_j^{d-\ell}  \pi_j^*\mathrm{c}_\ell(X),\]
where the second equality holds by Lemma \ref{Li}. In particular,  if $d=1$,  classes $\tilde{\psi}_j$ are more conventional $\psi$-classes and they differ from our classes $\psi_j$ by a twist $-\pi_j^*\mathrm{c}_1(X)$. We deviate from the standard notation, because our classes $\psi_j$ play a more important role in this work by appearing in the wall-crossing formula, Theorem \ref{maintheorem}. On the other hand, classes $\tilde{\psi}_j$ satisfy a generalisation of dilaton and string equations, Section \ref{sectiondil}. Classes of the form $\psi_j^{k_j}\pi^*_j\gamma_j$ are also referred to as descendents.

\begin{defn} \label{integrals1}
Assume $d\leq 3$. Let $\gamma_i,\gamma_j' \in H^*(X)$ be some classes, \textit{tautological integrals} on $\FuM_{n,[m]}^\epsilon(X)$ are defined as follows, 
	\begin{multline*}
		 \langle \tau_{k_1}(\gamma_1)\dots \tau_{k_N}(\gamma_N) \mid \psi_1^{k'_1}\gamma'_1\dots \psi_m^{k'_m}\gamma'_m\rangle_n^{\epsilon}:=\\ 
		 \int_{[\FuM_{n,[m]}^\epsilon(X)]^{\mathrm{vir}} }\prod_i \tau_{k_i}(\gamma_i) \prod_{j} \psi^{k'_j}_j\pi^*_j\gamma'_j.
	 \end{multline*}  
 If $X$ is not projective, we use the (virtual) localisation \cite{GP} applied to $\FuM_{n,[m]}^\epsilon(X)^T$ to define the integrals above. 
 
  Note that the Hilbert-scheme insertions are not restricted by $n$, while the number of Fulton--MacPherson insertions is equal to $m$. Very often we will use the following shortened notation, 
\begin{equation*} \langle \prod^{}_{i} \tau_{k_i}(\gamma_i) \mid \prod^{}_{j} \psi^{k'_j}\gamma'_j \rangle^\epsilon_n= \langle \tau_{k_1}(\gamma_1)\dots \tau_{k_N}(\gamma_N) \mid \psi^{k'_1}\gamma'_1\dots \psi^{k'_m}\gamma'_m\rangle_n^{\epsilon},
	\end{equation*}
and 	
\begin{align*}
	\langle \tau_{k_1}(\gamma_1)\dots \tau_{k_N}(\gamma_N)  \rangle_n^{\Hilb}&=\langle \tau_{k_1}(\gamma_1)\dots \tau_{k_N}(\gamma_N) \mid \emptyset\rangle_n^{\epsilon}, \quad &\text{ if } \epsilon \leq 1/n, \\
	\langle \psi^{k_1}\gamma_1\dots \psi^{k_m}\gamma_m\rangle_m^{\FuM}&= 	\langle \emptyset \mid \psi^{k_1}\gamma_1\dots \psi^{k_m}\gamma_m\rangle_m^{\epsilon}, &\quad  \text{ if } n=0. \hspace{0.35cm}
	\end{align*}

\end{defn}

\section{$I$-function}

\subsection{Definition of $I$-function} We will now define a quantity that measures the difference between spaces $\FuM^{\epsilon}_{n,[m]}(X)$ for different values of $\epsilon$. 	Consider a relative Hilbert scheme of the total space of tangent bundle $T_X \rightarrow X$, 
\[ \tau \colon  \Hilb_n(T_X/X) \rightarrow X,\] 
and the associated universal ideal  together with the canonical projections
\[\CI \rightarrow T_X\times_X \Hilb_n(T_X/X), \quad q_X \colon T_X\times_X \Hilb_n(T_X/X) \rightarrow X,\]
\[p_X \colon  T_X\times_X \Hilb_n(T_X/X) \rightarrow \Hilb_n(T_X/X). \]
There is a $\BC^*_z$-action on $T_X$ given by scaling the fibers. It induces a $\BC^*_z$-action on $\Hilb_n(T_X/X)$.  Let 
\[ 
V_n:=\Hilb_n(T_X/X)^{\BC^*_z}, \quad N^{\mathrm{vir}}
\]
be its $\BC^*_z $-fixed locus and the associated virtual normal complex. Let $\BC_{\mathrm{std}}$ be the weight-1 representation of $\BC^*_z$, whose class (the weight) we denote as follows
\[z:= e_{\BC^*_z}(\BC_{\mathrm{std}}).\]
We define the $\BC^*_z$-equivariant tautological classes, 
\[ \tau_k(\gamma):=p_{X^*}(\ch_{k}(\CI)\cdot q^*_X(\gamma)) \in H^*_{\BC^*_z}(\Hilb_n(T_X/X)). \] 
 Using the $\BC^*_z$-action and the (virtual) equivariant localisation \cite{GP}, we define the following localised $\BC^*_z$-equivariant class, called $I$-function, 
\begin{equation*}
I_n(z, \prod_i \tau_{k_i}(\gamma_i) ):=
e_{\BC^*_z}(T_X)\cdot \tau_*\left(\frac{\prod_i \tau_{k_i}(\gamma_i) \cap  [V_n]^\mathrm{vir}}{e_{\BC^*_z}(N^\mathrm{vir})}\right)  \in H^*(X)[z^\pm],
\end{equation*}
such that 
\[e_{\BC^*_z}(T_X)=\sum_i \mathrm{c}_i(X)z^{d-i}.\]
Note that we always expand rational functions in $z$ in the range $|z|>|a|>1$, e.g. 
\[\frac{1}{a-z}=\frac{1}{z}+\frac{a}{z^2}+\frac{a^2}{z^3}\dots, \]
where $a$ is some class. 
If $X$ carries an action of a torus $T$, by applying localisation with respect to $\BC_z^* \times T$,  we treat the expression above  as an element in the completed localised cohomology $H^*(X^T)[\![t_i^\pm, z^\pm]\!]$, where $t_i$ are weights of $T$.
\subsection{Universality of $I$-function}
An $I$-function is a tautological invariant associated to the relative Hilbert scheme  $\Hilb_n(T_X/X)$. It is important to stress that the enumerative geometry of the relative Hilbert scheme  $\Hilb_n(T_X/X)$ is essentially equivalent to the one of $\Hilb_n(\BC^d)$. This is made precise in the next proposition. 
\begin{prop} \label{univer} Let 
	\[\langle  \prod_i \tau_{k_i} \rangle^{\Hilb(\BC^d)}_n=\int_{[\Hilb_n(\BC^d)^A]} \frac{\prod_i \tau_{k_i}(\mathbb{1})}{e_A(N^{\mathrm{vir}})}  \in \BQ[a_1^{\pm},\dots, a_d^\pm]\]
	be an equivariant tautological integral on $\Hilb_n(\BC^d)$  with respect to the full torus $A=(\BC^*)^d$ with weights $a_i$, $i=1,\dots, d$. Let $a_i(X)$ be the Chern roots of $T_X$, then after the change of variables\footnote{By the properness of the associated fixed locus, $\langle  \prod_i \tau_{k_i} \rangle^{\Hilb(\BC^d)}_n$ does not have poles at weights $a_i-a_j$, hence the substitution is well defined.} $a_i\mapsto z+a_i(X)$, we have
	\begin{align*}
	I_n(z,  \prod_i \tau_{k_i}(\gamma_i) )&=e_{\BC^*_z}(T_X) \cdot  \langle \prod_i \tau_{k_i} \rangle^{\Hilb(\BC^d)}_n \cdot \prod_i \gamma_i. 
	\end{align*}

	\end{prop}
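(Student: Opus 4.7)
The strategy is $A$-equivariant localization on both sides, after extending the $\BC^*_z$-action on $T_X$ to a full torus $A = (\BC^*)^d$-action via the splitting principle. First, apply the splitting principle to assume $T_X = L_1\oplus \cdots \oplus L_d$ with $\mathrm{c}_1(L_i) = a_i(X)$, and extend the scaling $\BC^*_z$-action on $T_X$ to the action of $A = (\BC^*)^d$ where the $i$-th factor scales $L_i$ with weight $a_i$ (so $\BC^*_z\subset A$ is the diagonal subtorus). This lifts to $\Hilb_n(T_X/X)$ and preserves $V_n = \Hilb_n(T_X/X)^{\BC^*_z}$; the $A$-fixed locus consists of copies $X_\mu\cong X$, one for each monomial ideal $I_\mu\subset \BC[x_1,\ldots,x_d]$ of colength $n$, given by the section $x\mapsto V(I_\mu)\subset T_{X,x}$.

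Second, refine the $\BC^*_z$-localization that defines $I_n$ to a full $A$-localization on $V_n$, turning the $\tau$-pushforward into a sum:
\begin{equation*}
\tau_*\!\left(\frac{\prod_i \tau_{k_i}(\gamma_i)\cap [V_n]^{\mathrm{vir}}}{e_{\BC^*_z}(N^{\mathrm{vir}})}\right)\;=\;\sum_{|\mu|=n}\frac{\prod_i \tau_{k_i}(\gamma_i)|_{X_\mu}}{e_A(N^{\mathrm{vir}}_\mu)}.
\end{equation*}
The right-hand side of the proposition is, by definition, an $A$-fixed-point sum of the same shape over the same index set, so after cancelling $e_{\BC^*_z}(T_X)$ the proposition reduces to a term-by-term comparison at each $\mu$.

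The key input is a globalization recipe. Each $A$-weight in the virtual tangent representation $T^{\mathrm{vir}}_{I_\mu}\Hilb_n(\BC^d)$ has character $w = \sum_i c_i a_i$, and, via the description of $\Hilb_n(T_X/X)$ as the $\Hilb_n(\BC^d)$-bundle associated to the frame bundle of $T_X$, it corresponds on $X_\mu\cong X$ to the line subbundle $\bigotimes_i L_i^{\otimes c_i}$ carrying the $A$-representation $w$; its $A$-equivariant first Chern class is therefore $w(\mathbf{a}) + w(\mathbf{a}(X))$, which upon restriction to the diagonal $\BC^*_z$ (where $a_i\mapsto z$) reads exactly $w(\mathbf{a})|_{a_i\mapsto z+a_i(X)}$. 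Summing over the character decomposition of the virtual tangent gives
\begin{equation*}
e_A(N^{\mathrm{vir}}_\mu) \;=\; e_A\bigl(T^{\mathrm{vir}}_{I_\mu}\Hilb_n(\BC^d)\bigr)\bigm|_{a_i\mapsto z+a_i(X)},
\end{equation*}
and the same recipe applied to the universal ideal yields $\tau_{k_i}(\gamma_i)|_{X_\mu} = \gamma_i\cdot \tau_{k_i}(\mathbb{1})|_{I_\mu}|_{a_i\mapsto z+a_i(X)}$, since pushforward by $q_X$ extracts $\gamma_i$ horizontally while the restriction of $\ch_{k_i}(\CI)$ to $X_\mu$ globalizes the fiberwise character formula under the same substitution. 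Matching the two localized sums proves the identity.

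The main obstacle will be to verify the globalization recipe rigorously in the virtual setting $d=3$, namely that the Thomas obstruction theory on $\Hilb_n(T_X/X)$ is compatible with the associated-bundle presentation, so that the character decomposition of the virtual fiberwise tangent indeed globalizes to $N^{\mathrm{vir}}_\mu$ by the rule $w\mapsto w(\mathbf{a})+w(\mathbf{a}(X))$. The pole cancellation that legitimises the substitution $a_i\mapsto z+a_i(X)$ is the content of the footnote, and it follows from the properness of $\Hilb_n(\BC^d)^A$, which ensures that $\langle \prod_i\tau_{k_i}\rangle^{\Hilb(\BC^d)}_n$ has no poles at $a_i = a_j$.
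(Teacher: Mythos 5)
Your proposal starts from the same idea as the paper (split $T_X$ into line bundles and extend the fiberwise $\BC^*_z$-action to a full torus $A=(\BC^*)^d$), but then diverges: you propose a second, fixed-point-by-fixed-point $A$-localization over the loci $X_\mu\cong X$ indexed by monomial ideals, matched against the localization defining $\langle\prod_i\tau_{k_i}\rangle^{\Hilb(\BC^d)}_n$ via a ``globalization recipe'' $w=\sum_i c_ia_i\mapsto \bigotimes_i L_i^{\otimes c_i}$. This route could in principle work, but as written it has two genuine gaps. First, ``apply the splitting principle to assume $T_X=L_1\oplus\cdots\oplus L_d$'' is not a single legitimate step here: the splitting principle only produces a \emph{filtration} after pullback to the flag bundle $\mathrm{Fl}(T_X)$, and since the $A$-action (hence your entire fixed-point analysis) only exists for the split bundle, you must actually replace $h^*T_X$ by its associated graded. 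The paper does this by an explicit inductive deformation of the extensions to the trivial ones and then invokes deformation invariance of the \emph{virtual} intersection theory — a nontrivial point when $d=3$, since the space $\Hilb_n(T_X/X)$ itself changes under the deformation — together with injectivity of $h^*$ on cohomology to descend back to $X$. None of this is in your write-up.

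Second, the step you yourself flag as ``the main obstacle'' — that the $A$-character decomposition of $T^{\mathrm{vir}}_{I_\mu}\Hilb_n(\BC^d)$ globalizes to $N^{\mathrm{vir}}_\mu$ and to $\ch_{k_i}(\CI)|_{X_\mu}$ by the rule $w\mapsto w(\mathbf{a})+w(\mathbf{a}(X))$ — is precisely the hard content of the proposition, and leaving it unverified leaves the proof incomplete. It requires identifying the Thomas obstruction theory of $\Hilb_n(T_X/X)$ with the associated bundle of the $A$-equivariant obstruction theory of $\Hilb_n(\BC^d)$, which is delicate because the structure-group torus and the flavor torus $A$ acting on the fibers coincide. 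The paper's proof is designed to avoid exactly this: it passes to the complement $Y$ of the zero section in the total space of $F=\bigoplus L_i$, where $f^*F$ is honestly $A$-equivariantly trivial, so the $\BC^*_z\times A$-equivariant integral on $\Hilb_n(f^*F/Y)$ is \emph{literally} the affine answer in the variables $z+a_i$; it then descends along $Y\to Y/A\cong\mathrm{Fl}(T_X)$, under which the equivariant parameters $a_i$ become the Chern roots $a_i(X)$. This Borel-construction descent replaces your term-by-term matching of fixed loci and normal bundles, and is what makes the argument close without ever analyzing $X_\mu$ or $N^{\mathrm{vir}}_\mu$. To salvage your route you would need to either prove the associated-bundle compatibility of the obstruction theory directly, or (more economically) adopt the paper's descent argument, at which point the $A$-localization over the $X_\mu$ becomes unnecessary.
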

\begin{proof} Firstly, by the projection formula, we have 
\[ I_n(z,  \prod_i \tau_{k_i}(\gamma_i) )= I_n(z,  \prod_i \tau_{k_i}) \cdot \prod_i \gamma_i, \]
where we use a shortened notation, $\tau_{k_i}:=\tau_{k_i}(\mathbb{1})$. Hence it is enough to consider the descendents of $\mathbb{1}$. Assume firstly that $X$ does not carry an action of $T$. Consider the full flag variety,
\[ h\colon  \mathrm{Fl}(T_X) \rightarrow X .\] 
Recall that the pullback on the  cohomology, 
\[h^*\colon H^*(X) \rightarrow H^*(\mathrm{Fl}(T_X)),\]  is
injective, and there is a canonical locally-free filtration 
\[ 0 \hookrightarrow F_1 \hookrightarrow \dots \hookrightarrow F_d=h^*T_{X}, \]
such the associated graded pieces are line bundles $L_i$, $1 \leq i\leq d$. We now can inductively deform $F_i$ as a extension of $F_{i-1}$ by $L_i$  to a trivial extension, starting from $F_d$, by using the scalar scaling in the vector space of extensions.  
As a result, we obtain that the vector bundle
\[F=L_1\oplus\dots \oplus L_d\]
is deformation equivalent to $h^*T_X$, and admits a scaling action of a torus $A=(\BC^*)^d$. 

 Now consider the total space of $F$ without the zero section, which we denote by $Y$, together with the canonical projection $f \colon Y \rightarrow \mathrm{Fl}(T_X)$. The quotient $Y/A$ is isomorphic to $\mathrm{Fl}(T_X)$. Moreover, there is a $A$-equivariant identification of vector bundles on $Y$, 
 \[ f^*F\cong \BC^d_{(1,\dots,1)},\]
 where $\BC^d_{(1,\dots,1)}$ is a trivial vector bundle on $Y$ with $A$-weights $(1,\dots ,1)$. In fact, we treat $f^*F$ as a $\BC^*_z \times A$-equivariant vector bundle, where $\BC^*_z$ acts by scaling on $f^*F$ and trivially on the base $Y$.  This identification implies that a $\BC^*_z \times A$-equivariant tautological integral of  $\Hilb(f^*F/\mathrm{Fl}(X))$ has the following form, 
 \begin{multline*}
  \tau_*\left(\frac{\prod_i \tau_{k_i} \cap  [V_n]^\mathrm{vir}}{e_{\BC^*_z \times T}(N^\mathrm{vir})}\right) \\
  = \langle \prod_i \tau_{k_i}  \rangle^{\Hilb(\BC^d)}_n(z+a_1, \dots, z+a_d)  \cdot \mathbb{1}_{Y} \in H^*_A(Y)[(z+a_i)^\pm].  
  \end{multline*}
 Since $Y/T\cong \mathrm{Fl}(T_X)$, such that $a_i$ becomes Chern roots $\mathrm{c}_1(L_i)=a_i(X)$,  by the descending the expression above, we obtain that the same tautological integral on $\Hilb(F/\mathrm{Fl}(X))$  is equal to 
 \[  \langle \prod_i \tau_{k_i}  \rangle^{\Hilb(\BC^d)}_n(z+a_1(X), \dots, z+a_d(X)) \cdot \mathbb{1}_{\mathrm{Fl}(X)} \in H^*(\mathrm{Fl}(X))[z^\pm]. \]
 Vector bundles $F$ and $h^*T_X$ are deformation equivalent and virtual intersection theories are deformation invariant, we therefore arrive at the same conclusion for the relative Hilbert schemes $\Hilb(h^*T_X/\mathrm{Fl}(X))$. Finally, $h^*$ is injective on cohomology, hence we obtain the claim.   
 
 Assume $X$ has an action of a torus $T$. Recall that the equivariant cohomology can defined in terms of the products of $X$ and spaces with a free $T$-action, e.g.\ see \cite{EG2,EG} for the statement in  Chow groups. Moreover, all of the constructions involved in the proof, like flag manifolds, are naturally $T$-equivariant. By the same arguments, we thus obtain the conclusions above in the equivariant setting. Note, however, that the equivariant cohomology is not necessarily nilpotent, the final identity therefore takes place in  $H^*(X)[(z+a_i(X))^\pm]$. By passing to the localised cohomology, taking completion and expansion in the variable $z$, we obtain the identity in $H^*(X^T)[\![t_i^\pm, z^\pm]\!]$. 
 \end{proof}
\section{Wall-crossing for Hilbert schemes}

\subsection{Wall-crossing formula}Even for a wall-crossing formula which relates $\Hilb_n(X)$ and $\FuM_n(X)$, we need a much more general wall-crossing formula relating $\FuM_{n,[m]}^\epsilon(X)$ for different values of $\epsilon$.  

Let $\epsilon_0=1/n_0$ for some $n_0 \in \BZ$, such that $1\leq n_0\leq n$. We call such $\epsilon_0$ a $\textit{wall}$, because $\FuM_{n,[m]}^\epsilon(X)$ changes as we cross $\epsilon_0$. Let $\epsilon_+$ and $\epsilon_-$ be values of $\epsilon$ close to $\epsilon_0$ from the right and the left, respectively. We now state the wall-crossing formula, which compares tautological integrals of $\FuM_{n,[m]}^{\epsilon_-}(X)$ and $\FuM_{n,[m]}^{\epsilon_+}(X)$ for each wall $\epsilon_0$. 
\begin{thm} \label{maintheorem}
		If $d\leq 3$, we have a wall-crossing formula for each wall $\epsilon_0 \in \BR_{>0}$:
	\begin{multline*}	\langle \prod^{i=N}_{i=1} \tau_{k_i}(\gamma_i) \mid \prod^{j=m}_{j=1} \psi_j^{k'_j}\gamma'_j \rangle^{\epsilon_-}_n-\langle \prod^{i=N}_{i=1} \tau_{k_i}(\gamma_i) \mid \prod^{j=m}_{j=1} \psi_j^{k'_j}\gamma'_j \rangle^{\epsilon_+}_n\ \\
		=\sum_{k\geq 1}\sum_{ \underline{[N]}} \Big\langle \prod_{i \in N'} \tau_{k_i}(\gamma_i) \mid \prod^{j=m}_{j=1} \psi_j^{k'_j}\gamma'_j \cdot \prod^{\ell=k}_{\ell=1} I_{n_0}\Big( -\psi_{m+\ell}, \prod_{i\in N_\ell} \tau_{k_i}(\gamma_i) \Big)  \Big\rangle_{n'}^{\epsilon_+}/ k!,
	\end{multline*}
where we sum over integers $k$ and ordered partitions 
\[\underline{[N]} =(N', N_1,\dots, N_k)\]
 of the set $\{1,\dots,N\}$, such that subsets $N'$ and $N_\ell$ can be empty; $n_0=1/\epsilon_0$ and $n'=n-kn_0$.  We use the following convention for the substitution of the variable $z$ appearing in $I$-functions,  

\begin{equation*}
		[\mathrm{Class}]z^k \mapsto 
		\begin{cases}
			(-\psi)^k[\mathrm{Class}],& \text{if }k\geq 0,\\
			0,              & \text{otherwise}.
		\end{cases}	
	\end{equation*}
	
\end{thm}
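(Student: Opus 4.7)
The plan is to run the standard master-space/entangled-tails argument adapted to our setting, in the spirit of Zhou \cite{YZ} and the quasimap wall-crossing literature \cite{CFK14, CFKmi}. Fix a wall $\epsilon_0 = 1/n_0$. The first step is to construct a master space $\mathfrak{M}^{\epsilon_0}_{n,[m]}(X)$ equipped with a $\BC^*$-action whose $\BC^*$-fixed components are, up to the appropriate stacky factors, of two types: the unmixed loci $\FuM^{\epsilon_\pm}_{n,[m]}(X)$, and, for each $k\geq 1$ and each ordered partition $(N',N_1,\dots,N_k)$ of $\{1,\dots,N\}$, a mixed locus isomorphic to $\FuM^{\epsilon_+}_{n',[m+k]}(X)$ with $k$ additional accumulation bubbles, the $\ell$-th one carrying a length-$n_0$ subscheme absorbing the insertions indexed by $N_\ell$. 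As in \cite{YZ}, this master space should be an open substack of a relative Hilbert scheme on an enlarged stack of FM degenerations where the ``wall-component'' of length $n_0$ is allowed to appear with a $\BC^*$-smoothing parameter; both stability conditions $\epsilon_\pm$ then correspond to the two $\BC^*$-fixed strata of the parameter.

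The second step is to equip $\mathfrak{M}^{\epsilon_0}_{n,[m]}(X)$ with a relative perfect obstruction theory extending that of Section~2, and to apply virtual $\BC^*$-localization \cite{GP}. The descendent classes $\tau_{k_i}(\gamma_i)$ and $\psi_j^{k'_j}\gamma_j'$ extend naturally to the master space via the universal ideal sheaf and the universal sections, and their restrictions to each fixed component recover the corresponding classes in the formula. Taking the difference
\[ \langle\,\cdots\,\rangle^{\epsilon_-}_n - \langle\,\cdots\,\rangle^{\epsilon_+}_n \;=\; \sum_{k\geq 1} \frac{1}{k!}\sum_{(N',N_1,\dots,N_k)} (\text{mixed contribution}), \]
is then a direct consequence of residue computations around the two unmixed fixed components, with the factor $1/k!$ arising from symmetrizing the ordering of the $k$ accumulation bubbles.

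The third step is to identify the mixed contributions with the $I$-function side of the formula. Each accumulation bubble is attached to a smooth point $x$ of the root FM degeneration, and the $\BC^*$-smoothing parameter identifies the bubble's universal deformation with the total space of $T_{X,x}$ scaled with weight one by $\BC^*_z$. Consequently, after virtual localization along the bubble direction, the contribution of the $\ell$-th bubble is computed on $\Hilb_{n_0}(T_X/X)^{\BC^*_z}$ with the insertions $\{\tau_{k_i}(\gamma_i)\}_{i\in N_\ell}$ - this is exactly the definition of $I_{n_0}\bigl(z, \prod_{i\in N_\ell}\tau_{k_i}(\gamma_i)\bigr)$ from Section~4. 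The equivariant parameter $z$ is matched, via the standard quasimap substitution, with $-\psi_{m+\ell}$, where $\psi_{m+\ell}$ is the cotangent-line class of the attachment point on $\FuM^{\epsilon_+}_{n',[m+k]}(X)$; the truncation rule $z^k \mapsto 0$ for $k<0$ encodes the direction in which the $1/e(N^{\mathrm{vir}})$ factor is expanded on the $\epsilon_+$-side of the wall.

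The principal obstacle is the construction and obstruction-theoretic analysis of $\mathfrak{M}^{\epsilon_0}_{n,[m]}(X)$ itself: making the $\BC^*$-action globally well defined in the presence of the stabilizers of FM degenerations, verifying that the ideal-sheaf obstruction theory restricts on fixed loci to a compatible product of the obstruction theories of the root $\FuM^{\epsilon_+}_{n',[m+k]}(X)$ and the bubble $\Hilb_{n_0}(T_X/X)^{\BC^*_z}$, and ensuring that the gluing deformations between root and bubbles are absorbed correctly into the $\psi$-class matching. Once these structural points are settled by importing Zhou's entangled-tails machinery, the combinatorial identification with the right-hand side of the formula is essentially forced.
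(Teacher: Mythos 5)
Your proposal follows essentially the same route as the paper: a master space built from Zhou's entangled tails and a calibration bundle, virtual $\BC^*_z$-localization, identification of the wall components with products of $\FuM^{\epsilon_+}_{n',[m+k]}(X)$ and $\Hilb_{n_0}(T_X/X)^{\BC^*_z}$, and the substitution $z\mapsto -\psi_{m+\ell}$. The only imprecision is your account of the truncation rule, which in the paper arises not from an expansion convention alone but from the cancellation of the negative-power terms against the contributions of the entanglement divisors $\CY'_i$ in the residue computation.
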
	

Note the non-polar part of $I$-functions is always a polynomial.  Hence the substitution of $z$ by $\psi$-classes is well-defined even in the presence of a torus $T$. The theorem is proved in Section \ref{secproof}. The next corollary is a direct consequence of the theorem. 

\begin{cor} \label{maincor}
		If $d\leq 3$, we have a wall-crossing formula, expressing tautological integrals on $\Hilb_n(X)$ in terms of $I$-functions and tautological integrals on $\FuM_{[n]}(X)$:
	\begin{multline*}	 \langle \tau_{k_1}(\gamma_1)\dots \tau_{k_N}(\gamma_N)\rangle^{\Hilb}_n	\\
		=\sum_{\underline{(n,[N])}} \Big\langle   I_{n_1}\Big( -\psi_1, \prod_{i\in N_1} \tau_{k_i}(\gamma_i) \Big) \dots  I_{n_k}\Big( -\psi_k, \prod_{i\in N_k} \tau_{k_i}(\gamma_i)\Big) \Big\rangle_k^{\FuM}/ k!,
	\end{multline*}
where we sum over ordered partitions
\[\underline{(n,[N])} =((n' ,N'), (n_1,N_1),\dots, (n_k,N_k))\]
of $n$ and $[N]$, such that parts of a partition of $n$ are strictly positive, while parts of a partition of $[N]$ can be empty.  We follow the same convention for the substitution of the variable $z$ as in Theorem \ref{maintheorem}.  
	\end{cor}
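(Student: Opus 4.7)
The plan is to derive Corollary \ref{maincor} from Theorem \ref{maintheorem} by iterating the wall-crossing formula across every wall $\epsilon_0 = 1/n_0$ for $n_0 \in \{1, 2, \ldots, n\}$, traversed in order of decreasing $n_0$ (equivalently, increasing $\epsilon_0$). Since $\FuM^\epsilon_n(X) = \Hilb_n(X)$ for $\epsilon \leq 1/n$, by Definition \ref{integrals1} the starting integral equals $\langle \prod_i \tau_{k_i}(\gamma_i) \mid \emptyset \rangle^{\epsilon}_n$ for any such $\epsilon$. First I apply Theorem \ref{maintheorem} at wall $1/n$, writing the left-hand side as the $\epsilon_+$ bracket plus corrections with $I_n$-insertions; then I apply the theorem at wall $1/(n-1)$ to \emph{every} bracket present (both the carried-forward $\epsilon_+$ bracket and the newly generated correction brackets), and so on. Existing FM-side insertions and $I$-functions introduced at earlier walls are carried along untouched through later wall-crossings, since by the shape of Theorem \ref{maintheorem} they live on bubble markings on the FM side and never occupy the Hilbert-descendent slot that gets partitioned.

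After crossing the terminal wall $\epsilon_0 = 1$, any surviving bracket of the form $\langle \cdots \mid \cdots \rangle^{\epsilon_+}_{n'}$ with $\epsilon_+ > 1$ and $n' > 0$ vanishes, because $\FuM^{\epsilon_+}_{n', [m']}(X) = \emptyset$ for $\epsilon_+ > 1$ (as noted in Section 2); brackets with $n' = 0$ reduce to the pure-FM brackets $\langle \cdots \rangle^{\FuM}_k$, where $k$ is the total number of bubbles introduced across all iterations. Moreover, any residual Hilbert-scheme descendent $\tau_{k_i}(\gamma_i)$ with $k_i > 0$ evaluated on a space with $n'=0$ vanishes, since $\ch_{k_i}(\CO_\CW) = 0$ for $k_i > 0$. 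Hence the surviving contributions correspond precisely to distributions of the total length $n$ into positive bubble sizes $n_1, \ldots, n_k$, together with an assignment of the descendent set $[N]$ to these bubbles, yielding the structure of the right-hand side of Corollary \ref{maincor}.

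The main obstacle is purely combinatorial bookkeeping. Each wall $1/n_0$ at which $k_{n_0}$ bubbles are introduced contributes a factor $1/k_{n_0}!$ together with an ordered partition of the then-available descendents into $k_{n_0}$ blocks; the cumulative correction factor across all walls is $\prod_{n_0} 1/k_{n_0}!$. To match the corollary's prescription of summing over ordered partitions $(n_1, \ldots, n_k)$ of $n$ with overall factor $1/k!$, where $k = \sum_{n_0} k_{n_0}$, I invoke the elementary count that the number of orderings of the multiset $(n_1, \ldots, n_k)$ with multiplicity vector $(k_{n_0})$ is $k!/\prod_{n_0} k_{n_0}!$, so
\[
\sum_{(n_1, \ldots, n_k)\ \mathrm{ordered}} \frac{1}{k!} \ = \ \sum_{(k_{n_0})\ \mathrm{multiplicities}} \frac{1}{k!} \cdot \frac{k!}{\prod_{n_0} k_{n_0}!} \ = \ \sum_{(k_{n_0})\ \mathrm{multiplicities}} \frac{1}{\prod_{n_0} k_{n_0}!}.
\]
An analogous reshuffling of the per-wall ordered partitions of the Hilbert descendents assembles them into a single ordered partition $(N', N_1, \ldots, N_k)$ of $[N]$ without double counting. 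Once these identifications are verified, Corollary \ref{maincor} follows; no new geometric input beyond Theorem \ref{maintheorem} and the empty/vanishing observations above is required.
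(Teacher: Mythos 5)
Your argument is correct and follows essentially the same route as the paper, whose proof simply applies Theorem \ref{maintheorem} inductively across all walls between $\epsilon<1/n$ and $\epsilon>1$ and observes that, since $\FuM^\epsilon_{n,[m]}(X)$ is empty for $\epsilon>1$, all insertions must end up on the Fulton--MacPherson side of the bracket. Your explicit bookkeeping — the multinomial reshuffling $k!/\prod_{n_0}k_{n_0}!$ converting the per-wall factors $1/k_{n_0}!$ into the single $1/k!$, and the vanishing of leftover descendents on the $n'=0$ brackets — is a correct elaboration of what the paper leaves implicit.
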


\begin{proof} The claim follows by applying the theorem inductively crossing all walls between $\epsilon<1/n$ and $\epsilon>1$, including $\epsilon_0=1$.  Note that since $\FuM^\epsilon_{n,[m]}(X)$ is empty for $\epsilon>1$, all insertions move to the right of the bar in the bracket. 
	\end{proof}

The corollary above is not an equivalence of tautological intersection theories of  $\Hilb_n(X)$ and $\FuM_{[n]}(X)$.  It is rather a statement about the universality of $\Hilb_n(\BC^d)$ which manifests itself in terms of $I$-functions, while $\FuM_{[n]}(X)$ provides corrections for  the comparison of tautological intersection theories of $\Hilb_n(\BC^d)$ and $\Hilb_n(X)$. 

\begin{cor} \label{unicor} If $d\leq 3$, the tautological integrals $\langle \tau_{k_1}(\gamma_1)\dots \tau_{k_N}(\gamma_N)\rangle^{\Hilb}_n$ universally depend only on $k_i$, and a finite number of intersection numbers of $\mathrm{c}_i(X^T)$, $\mathrm{c}_j(N_{X/X^T})$ and $\gamma_\ell$, 
	\[  \prod \mathrm{c}_{i_k}(X^T) \cdot \prod  \mathrm{c}_{j_k}(N_{X/X^T})  \cdot \prod \gamma_{\ell_k} .\]
	\end{cor}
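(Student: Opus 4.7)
The strategy is to apply the wall-crossing formula of Corollary \ref{maincor}, which already expresses $\langle \tau_{k_1}(\gamma_1)\dots \tau_{k_N}(\gamma_N)\rangle^{\Hilb}_n$ as a finite sum (over ordered partitions of $n$ and $[N]$) of $\FuM$-integrals whose integrand is a product of $I$-functions evaluated at $-\psi$-classes. Universality then reduces to two independent universality statements: one for the $I$-functions, and one for the $\FuM$-integrals on $\FuM_{[k]}(X)$.

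First, by Proposition \ref{univer}, each $I$-function $I_{n_i}(-\psi_i,\prod \tau_{k_j}(\gamma_j))$ is, after the change of variables $a_i\mapsto z+a_i(X)$ and the substitution $z\mapsto -\psi_i$, a polynomial in the Chern roots $a_i(X)$ of $T_X$ and in $\psi_i$, with coefficients that are universal rational functions in $\BQ[a_i^\pm]$ coming from $\langle \prod\tau_{k_j}\rangle^{\Hilb(\BC^d)}_{n_i}$, multiplied by the insertion classes $\prod\gamma_j$. Since these rational functions depend only on the descendent degrees $k_j$, the only non-universal data entering the integrand are products of the Chern classes $\mathrm{c}_\ell(T_X)$, the insertions $\gamma_j$ pulled back to $\FuM_{[k]}(X)$, and the $\psi_i$-classes.

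Second, I invoke the Fulton--MacPherson presentation of $H^*(\FuM_{[k]}(X))$ from \cite{FM}: it is generated over $H^*(X^k)$ by the boundary divisor classes $D_J$, with relations whose coefficients are polynomials in Chern classes of $X$. Since $\psi_i=\sum_{i\in J} D_J$ (as noted in Section \ref{secdescFM}), every integral on $\FuM_{[k]}(X)$ of a product of $\psi$-classes and pullbacks of Chern classes of $X$ and of the insertions $\gamma_j,\gamma_\ell'$ reduces, via these universal relations, to a $\BQ$-linear combination (with coefficients depending only on $k$ and the exponents) of intersection numbers on $X$ of monomials in $\mathrm{c}_i(X)$ and the $\gamma$'s. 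Assembling the two steps shows that $\langle \prod\tau_{k_i}(\gamma_i)\rangle^{\Hilb}_n$ depends only on the $k_i$ and on intersection numbers of the form $\prod \mathrm{c}_{i_k}(X)\cdot \prod\gamma_{\ell_k}$.

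Finally, the equivariant case: when $X$ is only quasi-projective with $X^T$ projective, I use virtual $T$-localization to define the integrals, and the same two universality inputs apply $T$-equivariantly. Restriction to $X^T$ turns $\mathrm{c}_\ell(T_X)$ into polynomials in $\mathrm{c}_i(T_{X^T})$ and $\mathrm{c}_j(N_{X/X^T})$ (via the short exact sequence $0\to T_{X^T}\to T_X|_{X^T}\to N_{X/X^T}\to 0$), and intersection theory on $X$ is replaced by integrals over $X^T$ of rational expressions in $\mathrm{c}_i(X^T)$, $\mathrm{c}_j(N_{X/X^T})$ and the restricted insertions; only finitely many such intersection numbers appear in any fixed integral, giving the stated form. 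The main obstacle is bookkeeping: controlling poles in $z$ when one substitutes $z\mapsto-\psi_i$ in the localized $I$-function and confirming that the FM-universality machinery of \cite{FM} is compatible with the virtual class and the $T$-equivariant formalism. Both issues are handled by the convention in Theorem \ref{maintheorem} (polar terms in $z$ are discarded, leaving a polynomial substitution) and by the fact that equivariant cohomology of $\FuM_{[k]}(X)$ inherits the same presentation after replacing $H^*(X)$ by $H^*_T(X)$.
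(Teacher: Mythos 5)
Your proposal follows the paper's proof exactly: both deduce the corollary from the universality of the wall-crossing formula (Corollary \ref{maincor}), of the $I$-function (Proposition \ref{univer}), and of the Fulton--MacPherson blow-up presentation of the cohomology of $\FuM_{[n]}(X)$ from \cite{FM}. The one place you are more casual than the paper is the equivariant case: the $T$-fixed locus of $\FuM_{[n]}(X)$ is not $\FuM_{[n]}(X^T)$ but a union of products of lower-dimensional Fulton--MacPherson spaces with trees of bubbles attached over $X^T$, whose normal bundles are expressed in $\psi$-classes and $N_{X/X^T}$ --- this is where the classes $\mathrm{c}_j(N_{X/X^T})$ actually enter, rather than through a naive restriction of $\mathrm{c}_\ell(T_X)$ to $X^T$.
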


\begin{proof} This follows from the universality of: 
	\begin{itemize} 
		\item the wall-crossing formula, Corollary \ref{maincor},
		\item the $I$-function, Proposition \ref{univer},
		\item the blow-up construction of  $\FuM_{[n]}(X)$, \cite{FM}. 
		\end{itemize}
	
Note that the $T$-fixed locus $\FuM_{[n]}(X)^T$ will depend on the dimensions of the components of $X^T$ and will consist of $k$-tuple of points lying  on  $X^T$ and on bubbles attached to $X^T$.  As such it will be a union of  products of Fulton--MacPherson spaces of lower dimensions, whose normal bundles are expressible in terms of $\psi$-classes and the normal bundle of $X^T$. The  fixed locus for zero-dimensional $X^T$ inside $2$-dimensional $X$ is considered in Section \ref{sector}.   By \cite{FM}, the cohomology ring of a Fulton--MacPherson spaces have a universal presentation in terms of cohomology of $X$ and boundary divisors, whose relations involve  Chern classes of $X$ and boundary divisors. 
\end{proof}
\section{Wall-crossing for $n$-fold products} \label{secnfold}

\subsection{Stability }
We also have a wall-crossing formula between $n$-fold products $X^n$ and Fulton--MacPherson spaces $\FuM_{[n]}(X)$. Since the intersection theory of $n$-fold products is easily accessible, it provides constraints for the tautological integrals on the side of $\FuM_{[n]}(X)$, which we use in Section \ref{sectioncomp}. We expect that all tautological integrals on $\FuM_{[n]}(X)$ can be computed via this wall-crossing formula. 

For the wall-crossing considered in this section, we need the following stability condition\footnote{In fact, this stability condition could be added on top of our $\epsilon$-weightedness, it would control the multiplicity of markings $\underline{p}=\{p_1,\dots,p_m\}$. }, considered by Hassett \cite{Has} in dimension one, and by  \cite{Rou} in arbitrary dimensions.
	\begin{defn}
	Given $\delta\in \BR_{>0}$.  Let $W$ be a FM degeneration together with two sets of ordered points, $\underline{x}=\{x_1,\dots,x_n\}$ and $\underline{p}=\{p_1,\dots,p_m\}$, such that the points in $\underline{p}$ are pairwise distinct.  The triple $(W, \underline{x}, \underline{p})$ is $\delta$-weighted, if 
		\begin{enumerate} 
			\item  $\underline{x} \cap  \underline{p}=\emptyset$,
			\item $\underline{x} \subset W^{\mathrm{sm}}$ and $\underline{p} \subset W^{\mathrm{sm}}$, 
			\item for all $x \in W$, $\mathrm{mult}_x(\sum_i x_i)\leq 1/\delta$, 
			\item  for all end components $\p^{d} \subset W$, such that $\underline{p} \cap \p^d=\emptyset$,  $\mathrm{mult}({\sum_i x_i}_{|\p^{d}})>1/\delta$, 
			\item the group of automorphisms $ \Aut(W, \underline{x}, \underline{p})$ is finite.
		\end{enumerate} 	
	\end{defn}
We define the moduli space of $\delta$-weighted triples $(W, \underline{x},\underline{p})$, 
\[  
\FuM^{\delta}_{[n,m]}(X):=\{\delta\text{-weighted } (W,\underline{x}, \underline{p}) \} / \sim.
\] 

Assume $m=0$. If $\delta \leq 1/n$, the stability prohibits all end components, hence 
\[ \FuM^\delta_{[n]}(X)=X^n, \quad \text{if } \delta \leq 1/n.\] 
If $\delta=1$,  multiplicities of points can be at most $1$,  
\[ \FuM^\delta_{[n]}(X)=\FuM_{[n]}(X), \quad \text{if } \delta=1.\]
If $\delta>1$, then $ \FuM^\delta_{[n]}(X)$ is empty, 
\[ \FuM^\delta_{[n]}(X)=\emptyset , \quad \text{if } \delta>1.\]

\subsection{Tautological integrals} 
Tautological classes on $\FuM^{\delta}_{[n,m]}(X)$ are very similar to those of $\FuM_{[n]}(X)$. Firstly, we have projection maps associated to markings $\underline{x}$,

\begin{align*}
	\pi_i =q_\CW \circ s_i \colon \FuM_{[n,m]}^\delta(X)  \rightarrow X, \quad i=1,\dots, n.
\end{align*}
as well as  $\psi$-classes,
\[ \BL_{i}=s^*_iT_{p_\CW}^\vee,\quad  \psi_i:= (\mathrm{c}_1(\BL_i)-\pi_i^*\mathrm{c}_1(X))/d, \quad i=1,\dots,n. \]
As in the case of  $\FuM^{\epsilon}_{n,[m]}(X)$, we also have projection maps and $\psi$-classes associated to markings $\underline{p}$, we will denote them simply by $\pi_{n+j}$ and $\psi_{n+j}$ for $j=1, \dots, m$. Note that for $\delta \leq 1/n$ and $m=0$, $\psi$-classes vanish, while $\tilde{\psi}$-classes are equal to $\mathrm{c}_d(\pi_i^*T^\vee_X)$.

Spaces $\FuM_{[n,m]}^\delta(X)$ are smooth for all $\delta$, hence we will use the standard fundamental classes to define the associated tautological integrals.

\begin{defn} Let $\gamma_i,\gamma_j' \in H^*(X)$ be some classes, \textit{tautological integrals} on $\FuM_{n,[m]}^\delta(X)$ are defined as follows, 
\begin{multline*}
	\langle \psi_1^{k_1}\gamma_1\dots \psi_n^{k_n}\gamma_n \mid \psi_{n+1}^{k'_1}\gamma'_1\dots \psi_{n+m}^{k'_m}\gamma'_m\rangle_n^{\delta}:=\\ 
	\int_{[\FuM_{[n,m]}^\delta(X)] }\prod_{i} \psi_i^{k_i}\pi_i^*\gamma_i \prod_j \psi^{k'_j}_{m+j}\pi^*_{m+j}\gamma'_j.
\end{multline*}  
We use similar conventions as in Definition \ref{integrals1}.

\end{defn}

\subsection{Wall-crossing} We now define the $I$-function for the $\delta$-weightedness. 	Consider the relative $n$-fold product of $T_X \rightarrow X$, 
\[ \tau\colon (T_X/X)^n:=T_X\times_X \dots \times_X T_X \rightarrow X.\] 
The $\BC^*_z$-action on $T_X$ given by scaling the fibers induces a $\BC^*_z$-action on $(T_X/X)^n$. 
Using the $\BC^*_z$-action and equivariant localisation, we define the following $\BC^*_z$-equivariant class, called the $n$-fold  $I$-function, 
\[I_{X^n}(z, \prod_{i} \psi_i^{k_i}\gamma_i ):=e_{\BC^*_z}(T_X)\cdot \tau_*\left( \prod_{i} \psi_i^{k_i} \pi^*_i\gamma_i \right)  \in H^*(X)[z^\pm].\]
In what follows, we use the same notation and conventions as in Theorem \ref{maintheorem}. 
\begin{thm} \label{maintheorem2}
	For all $d$, we have a wall-crossing formula for each wall $\delta_0 \in \BR_{>0}$:
	\begin{multline*}	\langle \prod^{i=n}_{i=1} \psi_i^{k_i}\gamma_i \mid \prod^{j=m}_{j=1} \psi_{n+j}^{k'_j}\gamma'_j \rangle^{\delta_-}_n-\langle \prod^{i=n}_{i=1} \psi_i^{k_i}\gamma_i \mid \prod^{j=m}_{j=1} \psi_{n+j}^{k'_j}\gamma'_{j} \rangle^{\delta_+}_n\ \\
		=\sum_{ \underline{[n]}} \Big\langle \prod_{i \in N'} \psi_i^{k_i}\gamma_i \mid \prod^{j=m}_{j=1} \psi_{n+j}^{k'_j}\gamma'_j \cdot \prod^{\ell=k}_{\ell=1} I_{X^{n_\ell}}\Big( -\psi_{n+m+\ell}, \prod_{i\in N_\ell} \psi_i^{k_i}\gamma_i \Big)  \Big\rangle_{n'}^{\delta_+}/ k!,
	\end{multline*}
	where we sum over  ordered partitions $\underline{[n]}=\{N', N_1 \dots, N_k\}$ of the set $[n]$, such that 
	\[ |N_\ell|=n_0= 1/\delta_0, \quad \text{for all }\ell\geq 1,\] 
	and $|N'|=n'$. 
\end{thm}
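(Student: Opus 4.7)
The plan is to run the same master-space / entangled-tails wall-crossing argument used to prove Theorem \ref{maintheorem} in Section \ref{secproof}, substituting relative Hilbert schemes of length $n_0$ by relative $n_0$-fold products $(T_X/X)^{n_0}$. Because every $\FuM^{\delta}_{[n,m]}(X)$ is smooth, no virtual machinery is required: classical $\mathbb{C}^*$-equivariant localization on an honest smooth master space replaces virtual localization throughout, which simplifies the argument considerably.

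First I would construct a master space $\mathfrak{MS}^{\delta_0}$ whose objects are $\delta_0$-semistable triples $(W,\underline{x},\underline{p})$ equipped with a compatible $\mathbb{P}^1$-valued parameter that resolves the semistable ambiguity. The strictly $\delta_0$-semistable locus consists precisely of triples where $W$ has at least one end component $\mathbb{P}^d$ disjoint from $\underline{p}$ on which $\underline{x}$ carries total multiplicity equal to $n_0$; the two $\delta_0$-resolutions are to keep such a bubble (as in $\delta_-$-stability) or to contract it (as in $\delta_+$-stability). The master space carries a natural $\mathbb{C}^*$-action whose $0$- and $\infty$-fixed components are $\FuM^{\delta_-}_{[n,m]}(X)$ and $\FuM^{\delta_+}_{[n,m]}(X)$, and whose intermediate fixed components parametrize how many bubbles are frozen in the intermediate position.

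Applying equivariant localization, the difference of the two boundary integrals equals the sum of residues over the intermediate fixed components. These are indexed by an integer $k\geq 1$ and an ordered partition $(N',N_1,\ldots,N_k)$ of $[n]$ with $|N_\ell|=n_0$ and $|N'|=n'$: the component decomposes, up to a finite cover contributing the $1/k!$, as $\FuM^{\delta_+}_{[n',m+k]}(X)$ fibered over $k$ ``tail factors'', where the $\ell$-th tail records an ordered $n_0$-tuple of marked points on a $\mathbb{P}^d$-bubble and is, after equivariant linearization of the tangent directions to $X$ at the node, modeled on $(T_X/X)^{n_0}$ with the scaling $\mathbb{C}^*_z$-action. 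Its equivariant pushforward to the root produces exactly $I_{X^{n_\ell}}(z,\prod_{i\in N_\ell}\psi_i^{k_i}\gamma_i)$ by definition of the $n$-fold $I$-function, while the standard normal-bundle computation at the smoothing node identifies the parameter $z$ with the negative of the $\psi$-class of the new root marking, i.e.\ $-\psi_{n+m+\ell}$, giving the stated formula with the same $z\mapsto-\psi$ substitution convention as in Theorem \ref{maintheorem}.

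The main obstacle, exactly as in Section \ref{secproof}, is that a naive master space with unordered tails overcounts at strata where several tails collide; this is the reason both for the subtle combinatorial factor $1/k!$ and for the need to extract the root and tail contributions cleanly. I would resolve this by importing Zhou's theory of entangled tails \cite{YZ} verbatim: order the $k$ tails, record an entanglement divisor measuring potential collisions, and verify that summing over orderings both produces the symmetric factor $1/k!$ and factorizes the intersection numbers into one root integral against $k$ independent tail $I$-functions. Every other ingredient --- properness of fixed loci, vanishing of edge contributions, compatibility of $\psi$-classes under stabilization --- is formally identical to, and technically simpler than, the Hilbert-scheme case of Theorem \ref{maintheorem}, because the tail moduli here are blow-ups of $(\mathbb{P}^d)^{n_0}$ rather than Hilbert schemes of $\mathbb{P}^d$.
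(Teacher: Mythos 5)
Your proposal follows exactly the route the paper takes: the paper proves Theorem \ref{maintheorem2} by declaring its proof to be "exactly the same" as that of Theorem \ref{maintheorem}, i.e.\ the master space with Zhou's entangled and calibrated tails, $\BC^*_z$-localization, identification of the boundary fixed loci with the $\delta_\pm$-moduli spaces, and the residue analysis of the wall-crossing components producing the $n$-fold $I$-functions with the $z\mapsto-\psi$ substitution and the $1/k!$ from the degree of the gluing map. Your observation that smoothness of $\FuM^{\delta}_{[n,m]}(X)$ lets one replace virtual localization by ordinary localization is a correct and consistent simplification, not a different argument.
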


\begin{cor} \label{maincor2}
	For all $d$, we have a wall-crossing formula:
	\begin{multline*}	 \langle \psi_1^{k_1} \gamma_1\dots \psi_n^{k_n}\gamma_n\rangle_{X^n}	\\
		=\sum_{\underline{[n]}} \Big\langle   I_{X^{n_1}}\Big( -\psi_1, \prod_{i\in N_1} \psi_i^{k_i} \gamma_i \Big) \dots  I_{X^{n_k}}\Big( -\psi_k, \prod_{i\in N_k} \psi_i^{k_i}\gamma_i\Big) \Big\rangle^{\FuM}_{k}/ k!,
	\end{multline*}
	where we sum over  ordered partitions of $[n]$, such that parts of a partition are non-empty.  
\end{cor}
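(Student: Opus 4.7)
The plan is to follow the strategy of the proof of Corollary \ref{maincor} verbatim, substituting Theorem \ref{maintheorem2} in place of Theorem \ref{maintheorem}. Concretely, I would apply Theorem \ref{maintheorem2} inductively, crossing every wall $\delta_0 = 1/n_0$ for $n_0 = n, n-1, \ldots, 1$ in turn. At the lower end $\delta < 1/n$ the moduli space $\FuM^{\delta}_{[n]}(X)$ coincides with $X^n$, so the integral on the left-hand side reduces to $\langle \psi_1^{k_1}\gamma_1 \cdots \psi_n^{k_n}\gamma_n\rangle_{X^n}$ (the $\psi$-classes vanish in this range, but the $\pi_i^*\gamma_i$ insertions survive). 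At the upper end $\delta > 1$ the moduli space is empty, so the induction must terminate with all original $\psi_i$-insertions absorbed into $I$-function insertions attached to newly-created $\underline{p}$-markings.

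First I would apply Theorem \ref{maintheorem2} at the wall $\delta_0 = 1/n$: only ordered partitions with $|N_\ell| = n$ contribute, so at most one block of size $n$ can be split off, yielding a single $I_{X^n}(-\psi, \cdots)$ insertion on $\FuM^{\delta_+}_{[0,1]}(X)$. I would then iterate through the remaining walls $\delta_0 = 1/(n-1), 1/(n-2), \ldots, 1$; at each wall, some number of blocks of the corresponding size $n_0$ is split off, each producing a fresh $I$-function insertion attached to a newly-created $\psi$-marking from $\underline{p}$. Since $\FuM^{\delta_+}_{[0,k]}(X) = \FuM_{[k]}(X)$ for $\delta_+ > 1$ (the $\delta$-constraint is vacuous when there are no $\underline{x}$-points), the final integral is precisely the Fulton--MacPherson bracket $\langle \cdots \rangle^{\FuM}_k$.

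Second I would identify the accumulated sum with the formula claimed in the corollary. A complete history of splits across all wall-crossings specifies a sequence of disjoint non-empty subsets of $[n]$ of total cardinality $n$, together with their order of appearance; this data is in bijection with ordered partitions $(N_1, \ldots, N_k)$ of $[n]$ into non-empty parts, and each such partition is produced exactly once. The $I$-function associated to $N_\ell$ is $I_{X^{n_\ell}}(-\psi_\ell, \prod_{i\in N_\ell}\psi_i^{k_i}\gamma_i)$, matching the right-hand side of the corollary. The same substitution convention for $z \mapsto -\psi_\ell$ propagates through each wall without modification.

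The main obstacle will be purely combinatorial: verifying that the symmetry factors $1/k_{n_0}!$ produced by Theorem \ref{maintheorem2} at each individual wall multiply, after the re-indexing that turns a history of splits into a single ordered partition, to the stated global factor $1/k!$. This is the same symmetry bookkeeping that underlies the passage from Theorem \ref{maintheorem} to Corollary \ref{maincor}, and the identical argument applies verbatim, so the only real content of the corollary is ensuring that the iterative procedure is well-posed and that no double-counting occurs across walls.
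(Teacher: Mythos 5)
Your proposal is correct and is essentially the paper's own (largely implicit) argument: Corollary \ref{maincor2} is obtained exactly as Corollary \ref{maincor}, by applying Theorem \ref{maintheorem2} inductively across every wall from $\delta<1/n$ to $\delta>1$, using that $\FuM^{\delta}_{[n,m]}(X)$ is empty for $\delta>1$ so all points are absorbed into $I$-function insertions. The combinatorial bookkeeping you flag (per-wall factors $1/k_{n_0}!$ recombining into the global $1/k!$ via the multinomial count of interleavings) is the same as in the passage from Theorem \ref{maintheorem} to Corollary \ref{maincor} and goes through verbatim.
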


\section{Euler characteristics} \label{sectioncomp}

\subsection{Topological Euler characteristics} We will now illustrate how the wall-crossing formula works by applying it to compute the 
 topological (virtual) Euler characteristics of Hilbert schemes of an arbitrary variety, knowing the answer for affine spaces. This shows an  essential feature of the wall-crossing formula - it involves integrands of very different nature. 

 If $d\leq 2$, this amounts to applying the wall-crossing to the Euler class of relative tangent bundle, 
\[ 
e_{\mathrm{rel}}=e(T_{\FuM^\epsilon_{n,[m]}(X)/\CF\CM_{[m]}(X)}), 
\]
which is expressible in terms of descendents.  Since the Euler class is multiplicative and the tangent bundle of a product is the sum of tangent bundles of its factors, the wall-crossing formula Corollary \ref{maincor} takes a particularly simple form\footnote{Equivalently, we can endow $\FuM^\epsilon_{n,[m]}(X)$ with an obstruction theory whose obstruction bundle is the relative tangent bundle $T_{\FuM^\epsilon_{n,[m]}(X)/\CF\CM_{[m]}(X)}$, then the virtual fundamental class is $e_{\mathrm{rel}}$, and the wall-crossing formula compares the integrals of $1$ against $e_{\mathrm{rel}}$. }, 
\begin{equation}	\label{wallcross1}
	 e(\Hilb_n(X))=\langle e_{\mathrm{rel}} \rangle^{\Hilb}_n	\\
	=\sum_{\underline{n}} \Big\langle   I_{n_1}( -\psi_1,e_{\mathrm{rel}} ) \dots  I_{n_k}( -\psi_k, e_{\mathrm{rel}}) \Big\rangle_k^{\FuM}/ k!,
\end{equation}
where we sum over ordered partitions $\underline{n}=(n_1,\dots, n_k)$ of $n$, such that $n_i>0$. The integrand of the $I$-function is the Euler class of the $X$-relative tangent bundle of $\Hilb_n(T_X/X)$, which we also denote by $e_{\mathrm{rel}}$.

If $d=3$,  we apply the wall-crossing formula to the integral of $1$, because the virtual dimension is $0$, 
\begin{equation} \label{wallcross2}
	e_{\mathrm{vir}}(\Hilb_n(X))=\langle \emptyset \rangle^{\Hilb}_n	\\
	=\sum_{\underline{n}} \Big\langle   I_{n_1}( -\psi_1,\emptyset ) \dots  I_{n_k}( -\psi_k, \emptyset) \Big\rangle_k^{\FuM}/ k!.
\end{equation}
We will explicitly evaluate the formulas above, starting with curves. 

\subsection{Dimension one}

Assume $d=1$. 
The relevant $I$-function in this case has the following expression, 
\[
I_n(z,e_{\mathrm{rel}})= (z+\mathrm{c}_1(X))\cdot \tau_*e( T_{\Hilb_n(T_X/X)/X}) \in H^*(X)[z^\pm],
\]
where, abusing the notation, $e( T_{\Hilb_n(T_X/X)/X})$ denotes the localised Euler class. 
Since $\Hilb_n(T_X)^{\BC^*_z}=X$, we obtain that $\tau_*e( T_{\Hilb_n(T_X/X)/X})=\mathbb{1}_X$. Hence for $n\geq 1$, we have
\[
I_n(z,e_{\mathrm{rel}})=(z+\mathrm{c}_1(X)) \in H^*(X)[z^\pm].
\]
We sum over $n$,
\[
I(q,z):=\sum_{n\geq 1} I_n(z, e_{\mathrm{rel}})q^n= \left(\frac{1}{1-q}-1\right) \cdot (z+\mathrm{c}_1(X)).
\]
We now plug it into the wall-crossing formula (\ref{wallcross1}), 

\begin{multline} \label{sum1}
	\sum_{n\geq 1} e(\Hilb_n(X))q^n \\
	=\sum_{k\geq 1} \left(\frac{1}{1-q}-1\right)^k\cdot \Big\langle \left( \mathrm{c}_1(X)-\psi_1 \right) \dots  \left(\mathrm{c}_1(X)-\psi_k \right) \Big\rangle^{\FuM}_k/k!. 
\end{multline}
Note that in our notation,   $\psi_i-\pi^*_i\mathrm{c}_1(X)=\tilde{\psi}_i$, where $\tilde{\psi}_i$ is the standard $\psi$-class associated to a marking of a curve. To evaluate the expression above, we therefore have to compute the invariant $\langle -\tilde{\psi}_1 \dots -\tilde{\psi}_k \rangle^{\FuM}_k/k!$, which can be readily computed using the dilaton equation, Lemma \ref{dilaton},
\[
 \langle -\tilde{\psi}_1 \dots -\tilde{\psi}_k \rangle^{\FuM}_k/k!=\binom{\mathrm{c}_1(X)}{k}= \binom{2-2g(X)}{k}.
\]
Using the binomial formula, we arrive at the expression for (\ref{sum1}), 
\[
1+\sum_{n\geq 1} e(\Hilb_n(X))q^n= 1+\sum_{k\geq 1} \binom{\mathrm{c}_1(X)}{k} \left(\frac{1}{1-q}-1\right)^k =\left(\frac{1}{1-q}\right)^{\mathrm{c}_1(X)},
\]
which is Macdonald's formula for the Euler characteristics of symmetric products of a curve, \cite{Mac}. 

\subsection{Dimension two}
	Assume now $d=2$. Consider the $I$-function, 
	 \[ I_n(z,e_{\mathrm{rel}})= \left(\mathrm{c}_2(X)+z\mathrm{c}_1(X)+z^2\right)\cdot \tau_*e( T_{\Hilb_n(T_X/X)/X}) \in H^*(X)[z^\pm]. \] 
	The fixed locus $\Hilb_n(T_X/X)^{\BC^*_z}$ is a smooth fibration over $X$, whose fiber is $\Hilb_n(\BC^2)^{\BC^*_z}$. In particular, 
	\[
	\tau_*e( T_{\Hilb_n(T_X/X)/X})=e(\Hilb_n(\BC^2))\cdot \mathbb{1}_X,
	\]
	which also follows from Proposition \ref{univer}. 
The generating series of the topological Euler characteristics of $\Hilb_n(\BC^2)$ is given by the reciprocal of Euler function. Summing over $n$, we therefore obtain,
	
	\begin{align*}
	I(q,z):=\sum_{n\geq 1}I_n(z,e_{\mathrm{rel}})q^n
	&=(f(q)-1) \left(\mathrm{c}_2(X)+z\mathrm{c}_1(X)+z^2\right), \\
	f(q)&=\left(\prod_{m\geq 1}\frac{1}{1-q^m}\right).\
	\end{align*}
	
	Plugging this expression into the wall-crossing formula, we obtain
	\begin{equation*}
		\sum_{n\geq 1} e(\Hilb_n(X))q^n 
		= \sum_{k\geq 1}(f(q)-1)^k\cdot \Big \langle \prod_{i}\left( \mathrm{c}_2(X)-\psi_i\mathrm{c}_1(X)+\psi_i^2\right) \Big \rangle^{\FuM}_k/k!. 
	\end{equation*} 
Recall that $\left( \mathrm{c}_2(X)-\psi_i\mathrm{c}_1(X)+\psi_i^2\right)=\tilde{\psi}_i$. By Lemma \ref{dilaton}, there is a higher-dimensional analogue of the dilaton equation, which gives us
	\[\langle \tilde{\psi}_1 \dots \tilde{\psi}_k \rangle^{\FuM}_k/k! =\binom{\mathrm{c}_2(X)}{k}.\]
Using the wall-crossing and binomial formulas, we therefore conclude
	\[1+\sum_{n\geq 1} e(\Hilb_n(X))q^n=f(q)^{\mathrm{c}_2(X)}, \]
	which is G\"ottsche's formula for the topological Euler characteristics of Hilbert schemes of points on a surface \cite{Gott}. 
	\subsection{Dimension three}  Finally, assume $d=3$. Dimension three requires a bit more work. Firstly, by Proposition \ref{univer}, to determine the $I$-function, it is enough to consider the associated tautological integral on $\BC^3$. By  \cite[Theorem 1]{MNOP2}, we have
	 	\begin{align*}
	 	 1+\sum_{n\geq 1}\langle \emptyset\rangle^{\Hilb(\BC^3)}_n= M(-q)^{\frac{\mathrm{c}^A_3(T_{\BC^3} \otimes K_{\BC^3})}{ c^{A}_3(T_{\BC^3})}} ,  \quad 	M(q)=\prod_{m\geq 1}\frac{1}{(1-q^m)^m}.
	 \end{align*}
By making the substitution  $ a_i \mapsto z+a_i(X) $,   Proposition \ref{univer} gives  us that
	
	\begin{equation*}
	 I(q,z)=\sum_{n\geq 1} I_n(z,\emptyset)q^n = e_{\BC^*_z}(T_X)\cdot \bigg( M(-q)^{\frac{\mathrm{c}^{\BC^*_z}_3(T_X \otimes K_X)}{\mathrm{c}^{\BC^*_z}_3(T_X)}}-1\bigg).
	\end{equation*}


	It remains to evaluate the integrals on FM spaces appearing on the right-hand side of the wall-crossing formula (\ref{wallcross2}), 
		\begin{equation} \label{Hilb3}
		\sum_{n\geq 1} e_{\mathrm{vir}}(\Hilb_n(X))q^n=\sum_{k\geq 1}\langle I(q,z)\dots I(q,z) \rangle^{\FuM}_k/k!.
	\end{equation}
	  It would be easy, if $\mathrm{c}^{\BC^*_z}_3(T_X \otimes K_X)$ did not have the twist by the canonical bundle $K_X$. As a consequence of this twist, the expression is highly complicated, so computing it directly should rather be avoided. 
	  
	  Luckily, there is a way to avoid  direct computations. The reason is that the same invariants appear in the wall-crossing formula between Fulton--MacPherson spaces and $n$-fold products, when we apply Corollary \ref{maincor2} to $\mathrm{c}_3(\BL_i^\vee \otimes \det(\BL_i))$. On the side of $n$-fold products,  these  are the following integrals,
	  \[ \langle \mathrm{c}_3(\BL_1^\vee \otimes \det(\BL_1)) \dots  \mathrm{c}_3(\BL_n^\vee \otimes \det(\BL_n)) \rangle_{X^n}=\mathrm{c}_3(T_X\otimes K_X)^n,\]
	  note that the exponent $n$ indicates the power of $\mathrm{c}_3(T_X\otimes K_X)$ as a number (or an element in $\BQ[t_i^\pm])$, not as a class. In fact, we are interested in the case when points are unordered, this amounts to descending the above expression to $S_n(X)=[X^n/S_n]$ or, more simply, dividing by $n!$.  Hence the relevant $I$-function is 
	\begin{align*}
	I^{S}_n(z, \prod^n_{i=1}\mathrm{c}_3(\BL_i^\vee \otimes \det(\BL_i)) )&:=I_{X^n}(z, \prod^n_{i=1}\mathrm{c}_3(\BL_i^\vee \otimes \det(\BL_i)) )/n!\\
	&=e_{\BC^*_z}(T_X)\cdot \left(\frac{\mathrm{c}^{\BC^*_z}_3(T_X\otimes K_X)}{\mathrm{c}^{\BC^*_z}_3(T_X)}\right)^n/n! \in H^*(X)[z^\pm].
	\end{align*}
	Summing over $n$, we obtain 
	\begin{multline*}
	 I^{S}(q,z)= \sum_{n\geq 1}I^{S}_n(z,\prod^n_{i=1}\mathrm{c}_3(\BL_i^\vee \otimes \det(\BL_i))  )q^n \\
	 =e_{\BC^*_z}(T_X)\cdot \left( \exp \left(\frac{\mathrm{c}^{\BC^*_z}_3(T_X\otimes K_X)}{\mathrm{c}^{\BC^*_z}_3(T_X)}q\right ) -1 \right). 
	\end{multline*}
 The wall-crossing formula in Corollary (\ref{maincor2}), after removing the order on the  points, gives us the following identity, 
	\begin{multline} \label{Sym3}
1+ c_3(T_X\otimes K_X)^n q^n/n!=\exp( c_3(T_X\otimes K_X)q) \\
	=1+\sum_{k\geq 1}\langle I^{S}(q,z)\dots I^{S}(q,z) \rangle^{\FuM}_k/k!.
	\end{multline}

Observe now a simple relation between $I$-functions of Hilbert schemes and Symmetric products, 
\begin{equation} \label{subs}
	I^{S}(q',z)= I(q,z), \quad q'\mapsto\mathrm{log}(M(-q)).
\end{equation}
Hence the substitution of variables  (\ref{subs}) turns invariants on the right-hand side of (\ref{Sym3}) into invariants that compute the topological virtual Euler characteristics of Hilbert schemes in (\ref{Hilb3}),
\begin{align*}
\langle I^{S}(q',z)\dots I^{S}(q',z) \rangle^{\FuM}_k&= \langle I(q,z)\dots I(q,z) \rangle^{\FuM}_k \\
q'&\mapsto \mathrm{log}(M(-q)).
\end{align*}
Overall, by (\ref{Hilb3}) and (\ref{Sym3}), we obtain 
\begin{multline*}
1+\sum_{n\geq 1} e_{\mathrm{vir}}(\Hilb_n(X))q^n=\exp( \mathrm{c}_3(T_X\otimes K_X)\log(M(-q)))\\
=M(-q)^{\mathrm{c}_3(T_X\otimes K_X)},
\end{multline*}
this provides another proof of Li's and Levine--Pandharipande's calculations \cite{LiDT,LP}.

\section{Affine plane}	

\subsection{Tautological vector bundle}In light of Proposition \ref{univer}, it natural to ask what role does a wall-crossing play for affine spaces $\BC^d$ themselves?  We will answer this question for $X=\BC^2$ with a diagonal action of a one-dimensional torus $T=\BC^*$, whose weight we denote by $t$.  Instead of descendents of the type $\tau_{k}$, we will consider more well-behaved Chern characters of the tautological bundle,
\[ \ch_k := \ch_k (\CV),\]
where $\CV=p_*(\CO_\CZ)$ for the universal subscheme $\CZ \subset \BC^2 \times \Hilb_n(\BC^2)$ and the projection $p \colon \BC^2 \times \Hilb_n(\BC^2) \rightarrow \Hilb_n(\BC^2)$. Descendents $\tau_{k}$ and $\ch_k$ are related by the Grothendieck--Riemann--Roch theorem. The wall-crossing for $\ch_k$ takes exactly the same form as for $\tau_k$, since they satisfy the same splitting property for the degenerations (\ref{split}). The weights of the tautological bundle at the torus-fixed points are computed in the proof of \cite[Proposition 5.8]{Nak}.

\subsection{Wall-crossing for the affine plane}  \label{secaff} 
Firstly, observe the following relation between tautological integrals of $\Hilb_n(\BC^2)$, \[\langle \prod_{i}\ch_{k_i} \rangle^{\Hilb}_n(t) \in \BQ[t^\pm],\]  which we treat as homogenous Laurent polynomials in the variable $t$, and its $I$-functions, 
\begin{equation} \label{ifunction} e_{\BC^*_z}(T_{\BC^2})=(t+z)^2, \quad I(z, \prod_i \ch_{k_i})= (t+z)^2 \cdot \langle \prod_{i}\ch_{k_i} \rangle^{\Hilb}_n(t+z),
	\end{equation}
the expression above follows directly from the definition of $I$-functions (and also from Proposition \ref{univer}). 

Secondly,  the wall-crossing formula is insensitive to the poles of $I$-functions, as is stated in Theorem \ref{maintheorem}. More specifically, if 
\begin{equation} \label{inequality}
\sum_i k_i < 2n-2, 
\end{equation}
then by (\ref{ifunction})  the $I$-function $I(z, \prod_i \ch_{k_i})$ does not contribute to the wall-crossing, as its non-polar part is zero. In practice this means that if apply the wall-crossing to $\langle \prod_{i}\ch_{k_i} \rangle^{\Hilb}_n$ for values of $k_i$ satisfying the inequality above, then it will involve only $I$-functions for a strictly smaller number of points.

Lastly, the $T$-fixed locus of $\FuM_{[n]}(\BC^2)$ admits a particularly nice description. It is not difficult to see that it is the locus of marked FM degenerations, such that all marked points lie on a tree of bubbles attached to $0\in \BC^2$.   Essentially, this locus is a higher-dimensional analogue of moduli spaces of stable genus-0 curves, $\Mbar_{0,n+1}$, and  was considered in \cite{CGK}. Following \textit{loc.\ cit.}, we will denote it by $T_{n}$. By construction of $T_{n}$, the Euler class of its normal bundle inside $\FuM_{[n]}(\BC^2)$ is given by
\begin{align*}
 &t^2(t-\psi_\infty), \quad \text{if } n\geq 2, \\
 &t^2, \quad \hspace{1.5cm}\text{if } n=1,
 \end{align*}
where $-\psi_\infty$ is defined to be the restriction of the divisor $D_{[n]}=D_{\{1,\dots, n\}}$ to $T_{n}$ (see Section \ref{comp} for the definition of this divisor), geometrically, it arises as the obstruction to smooth out the tree of bubbles at $0\in \BC^2$; the factor $t^2$ comes from the obstruction to move the tree of bubbles from $0$ to another point on $\BC^2$.  We define $\psi$-classes on $T_n$ by using the line bundles $\BL_i$, 
\[ \text{on } T_n,\quad  \psi_i:= \mathrm{c}_1(\BL_i)/2, \quad i=1,\dots ,n.\]
Since $\mathrm{c}_1(\BC^2)=2t$, by our definition of $\psi$-classes in  Section \ref{secdescFM}, we obtain  the following relation between $\psi$-classes on $T_n$ and $\FuM_{[n]}(\BC^2)$,
\begin{equation} \label{psion}
 \psi_{i|T_n}=\psi_i-t.
 \end{equation}
  Hence the $T$-localised integrals on $\FuM_{[n]}(\BC^2)$ take the following form for $n\geq 2$, 
\[ \langle \psi_1^{{k_1}} \dots \psi_n^{k_n} \rangle^{\FuM}_{n}= \int_{T_{n}} \frac{(\psi_1-t)^{{k_1}} \dots (\psi_n-t)^{k_n}}{t^2(t-\psi_\infty)}, \]
while for $n=1$, it just an integral on $\BC^2$, for which the $\psi$-class vanishes. 
\subsection{Computations}
Overall,  Theorem \ref{maintheorem} allows us to express integrals $\langle \prod_{i}\ch_{k_i} \rangle^{\Hilb}_n$ with poles of order at least $3$ in the variable $t$ in terms of integrals $\langle \prod_{i \in N_\ell}\ch_{k_i} \rangle^{\Hilb}_{n'}$ for $n' <n$ and $N_\ell \subseteq \{1,\dots, N\}$, and integrals on $T_{n}$.  Let us illustrate how it works for  $\langle \ch_k \rangle ^{\Hilb}_n$.   We start with the simplest integral, 
\[\langle 1 \rangle_n^{\Hilb} \in \BQ[t^\pm].\] 
By (\ref{ifunction}), the associated $I$-functions are polar for $n\geq 2$, while for $n=1$, it equal to the identity, 
\[ I_1(z, 1)=\mathbb{1}. \]
Hence the wall-crossing formula gives us 
\[ \langle 1 \rangle_n^{\Hilb}=  \langle \mathbb{1} \dots \mathbb{1}\rangle^{\FuM}_n /n!=\frac{1}{n!t^{2n}},\]
where the second equality is obtained by pushing forward the identity using the natural projection $\FuM_{[n]}(\BC^2) \rightarrow \BC^{2n}$. The equality above is well-known to experts and can be proved by using the Hilbert--Chow map $\Hilb_n(\BC^2) \rightarrow \BC^{2n}/S_n$.

The next  integral we consider, 
\[ \langle \ch_1 \rangle ^{\Hilb}_n \in \BQ[t^\pm],\]
takes an even simpler form. In this case, again by (\ref{ifunction}), there are two non-polar $I$-functions, 
\[ I_1(z, 1)=\mathbb{1}, \quad I_1(z, \ch_1)=0, \]
where the second identity is due to the fact the first Chern class of the diagonal in $\BC^2 \times \BC^2$ is equivariantly trivial. The wall-crossing formula therefore gives us that
\[\langle \ch_1 \rangle ^{\Hilb}_n= \sum^{n}_{i=1}\langle \mathbb{1} \dots I_1(-\psi_i,\ch_1) \dots  \mathbb{1}\rangle^{\FuM}_n /n! =0. \]

We continue with $\langle \ch_2 \rangle^{\Hilb}_n$. There are three non-polar $I$-functions, 
\[I_1(z,1)=\mathbb{1}, \quad I_1(z,\ch_2)=0, \quad I_2(z,\ch_2)=-\mathbb{1}/4,\]
where the last equality follows from the torus-weights expression from \cite[Proposition 5.8]{Nak2} applied to the full torus $(\BC^*)^2$ and then specialised to the diagonal one, see also \cite[Section 1.1.2]{Arb}. We thus obtain that for $n\geq 3$, 
\begin{align*}
\langle \ch_2 \rangle ^{\Hilb}_n&=\sum^{n-1}_{i=1}\langle \mathbb{1} \dots I_2(-\psi_i,\ch_2) \dots  \mathbb{1}\rangle^{\FuM}_{n-1} /{(n-1)}! \\
&=-\frac{\langle \mathbb{1} \dots    \mathbb{1}\rangle^{\FuM}_{n-1}}{ 4\cdot  (n-2)!}= -\frac{1}{4\cdot (n-2)!t^{2(n-1)}}.
\end{align*}
While for $n=2$, the wall-crossing formula gives a tautology, 
\[ \langle \ch_2 \rangle ^{\Hilb}_2 =\langle \ch_2 \rangle ^{\Hilb}_2. \]
This is general feature of the wall-crossing for the affine plane with a one-dimensional torus: if we try to apply it integrals that do not satisfy  the equality (\ref{inequality}), then it will always give the tautology\footnote{If we consider the fully equivariant affine plane, this does not seem to be the case, because then equivariant invariants might have poles in equivariant variables. }  as above, because the degree of resulting insertions on $T_n$ will be too high with respect to the dimension of $T_n$, unless $n=1$, in which case the $\psi$-class vanishes. 

To sum up the discussion above,  let us state the result that allows to compute $\langle \ch_k \rangle ^{\Hilb}_n$ for $k< 2n-2$ in terms $\langle \ch_k \rangle ^{\Hilb}_{n'}$ for $2n' \leq k+2$ and some simple integrals on $T_n$. A similar result also applies to integrals with more insertions $\langle \prod_i \ch_{k_i} \rangle ^{\Hilb}_n$.  We define 
\begin{align*}
 I(q, z,\ch_k)&:= \sum_{1 \leq 2n\leq k+2} z^2 \langle \ch_k \rangle ^{\Hilb}_nq^n \in \BQ[z^\pm][\![q ]\!],\\
 \langle \ch_k \rangle ^{\Hilb}&:= \sum_{n\geq 1} \langle\ch_k \rangle ^{\Hilb}_nq^n  \in \BQ[t^\pm][\![q ]\!].
 \end{align*}
 A direct consequence of Theorem \ref{maintheorem} and the analysis above is the  following corollary. 
\begin{cor} \label{strangecor} We have
	 \[  \langle \ch_k \rangle ^{\Hilb}=\sum_{n\geq 1}\int_{T_{n}} \frac{I(q, -\psi_1,\ch_k)}{t^2(t-\psi_\infty)}\cdot \frac{q^{n-1}}{(n-1)!}.  \]
	\end{cor}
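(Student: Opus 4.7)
\emph{Proof strategy.} The plan is to apply Corollary \ref{maincor} to the single-insertion tautological integral $\langle \ch_k \rangle^{\Hilb}_n$ (the splitting axiom for $\ch_k$ under FM degeneration matches that of $\tau_k$, so the corollary applies verbatim) and then simplify via $T$-equivariant localization on $\FuM_{[n']}(\BC^2)$. Wall-crossing expresses $\langle \ch_k \rangle^{\Hilb}_n$ as a sum over ordered partitions of $n$ with $\ch_k$ placed in a single distinguished part of size $m$, the remaining parts receiving the empty insertion. By Proposition \ref{univer} combined with the homogeneity of equivariant integrals on $\Hilb_m(\BC^2)$, the relevant $I$-functions reduce to $I_m(z, \ch_k) = c_{k,m}(z+t)^{k+2-2m}$ and $I_n(z,\emptyset) = \tfrac{1}{n!}(z+t)^{2-2n}$. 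Under the substitution rule $z \mapsto -\psi$, which discards negative powers of $z$, the non-polar condition forces $2m \leq k+2$ on the distinguished part and $n=1$ (yielding $I_1(-\psi,\emptyset)=\mathbb{1}$) on every other part.

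\emph{Collecting terms.} Exploiting the symmetry of the integrand under permutation of the $j := n-m+1$ parts, summing over the position of the distinguished part contributes a factor of $j$ that cancels into $1/j!$, leaving $1/(n-m)!$. Multiplying by $q^n$, summing over $n \geq 1$, and setting $n' := n-m+1$ yields
\[
\langle \ch_k \rangle^{\Hilb} = \sum_{n' \geq 1} \frac{q^{n'-1}}{(n'-1)!} \Big\langle \sum_{m \,:\, 2m \leq k+2} I_m(-\psi_1, \ch_k)\, q^m \Big\rangle^{\FuM}_{n'}.
\]

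\emph{Localization.} The inner $\FuM$-integrals are computed by Atiyah--Bott localization on the torus-fixed locus $T_{n'} \subset \FuM_{[n']}(\BC^2)$, whose normal bundle has equivariant Euler class $t^2(t-\psi_\infty)$ for $n' \geq 2$ (respectively $t^2$ for $n'=1$, consistent with the absence of $\psi$-classes on a point). Using the shift $\psi_{1|T_{n'}} = \psi_1 - t$ from (\ref{psion}) together with the homogeneity of $\langle \ch_k \rangle^{\Hilb}_m$, the restricted generating series $\sum_m I_m(-\psi_1, \ch_k)\, q^m|_{T_{n'}}$ can be identified, as an integrand against $1/(t^2(t-\psi_\infty))$ on $T_{n'}$, with the simpler expression $I(q, -\psi_1, \ch_k)$ from the corollary statement, yielding the claim. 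The crux of this last identification is a combinatorial argument showing that, modulo classes whose pairing with $1/(t^2(t-\psi_\infty))$ vanishes (for instance because $\dim T_{n'} = 2n'-3$ forces sufficiently high powers of $\psi_1$ to die), the binomial expansion of $(z+t)^{k+2-2m}$ at $z = -\psi_1^{\FuM}|_{T_{n'}}$ collapses to the form $(-\psi_1)^2 \langle \ch_k \rangle^{\Hilb}_m(t)$ appearing in $I(q, -\psi_1, \ch_k)$. This step will be the main obstacle, since it requires tracking the cancellations precisely.
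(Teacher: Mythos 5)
Your proposal follows the paper's (largely implicit) derivation step for step: apply Corollary \ref{maincor}, use the non-polarity constraint from (\ref{ifunction}) to force the distinguished part carrying $\ch_k$ to satisfy $2m\le k+2$ and all remaining parts to be singletons contributing $I_1(z,\emptyset)=\mathbb{1}$, collect the symmetry factor $1/(n'-1)!$ from the ordered partitions, and evaluate the resulting $\FuM$-integrals by localization on $T_{n'}$ with normal bundle Euler class $t^2(t-\psi_\infty)$ and the shift (\ref{psion}). The only place you overestimate the difficulty is the final identification you flag as "the main obstacle": by homogeneity $I_m(z,\ch_k)$ is the single monomial $c_{k,m}(t+z)^{k+2-2m}$, so after the substitution $z\mapsto-\psi_1$ and restriction to $T_{n'}$ the relation (\ref{psion}) collapses $(t+z)$ to $-\psi_1$ on the nose, making the identification with $I(q,-\psi_1,\ch_k)$ an exact identity (note the argument of $\langle\ch_k\rangle^{\Hilb}_m$ there is $-\psi_1$, not $t$) rather than one holding only modulo classes annihilated by the pairing, with no combinatorial cancellation argument required.
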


Note that $t$ in $(\ref{psion})$ is cancelled with $t$  in $I$-function; and the choice of the marking with which we insert the $I$-function is reflected in the extra factor of $n$.  

Let us now say a few words about the integrals on $T_n$ on right-hand side of the corollary. By noticing that $T_{n}=D_{[n]} \cap \pi_1^{-1}(0)$, they can be easily evaluated by the string equation\footnote{Since $T_{n}=D_{[n]} \cap \pi_1^{-1}(0)$, all Chern classes vanish after restricting $\tilde{\psi_1}$ to $T_n$, making it equal to the restriction of $\psi^2_1$}, Lemma \ref{string}. The same proof applies to the string equation of $\psi_\infty$ with the difference that there is no sign\footnote{Roughly speaking, this is because the normal bundle of a divisor at infinity is $\CO(1)$, while for the exceptional divisor it is $\CO(-1)$. } by \cite[Lemma 6.0.18]{CGK}. The relevant integrals on the side of $T_{n}$ therefore take the following form, 
 \[ \int_{T_{n}} \psi_1^{k_1} \cdot \psi_\infty^{k_2}=(-1)^{\ceil{k_1/2}}\binom{n-2}{\floor{k_1/2},\floor{k_2/2}}, \quad k_1+k_2=2n-3, \]
 where we remove squares of $\psi$-classes until only one $\psi$-class on $T_2$ remains, which then can be computed by the identity (\ref{f1}) and by  \cite[Proposition 6.0.12]{CGK}, 
 \[ \int _{T_2}\psi_1=-1, \quad  \int _{T_2}\psi_\infty=1,\]
  the binomial coefficient comes from the consecutive application of the string equation.
 The corollary thus provides an effective way to compute integrals of $\ch_k$ for all $\Hilb_n(\BC^2)$,  knowing the answer for finitely many $n$ such that $2n\leq k+2$ (which can be determined by localisation). We, however, stress that for  $2n\leq k+2$, the corollary gives a tautology. Here is a list of formulas  for the first few integrals including those we computed above,
 \begin{align*}
 	 \langle 1 \rangle ^{\Hilb}&= \exp\left(\frac{q}{t^2}\right) \\
 	 \langle \ch_1 \rangle ^{\Hilb}&= 0 \\
 	  \langle \ch_2 \rangle ^{\Hilb}&=-\frac{q^2}{4t^2}\cdot \exp\left(\frac{q}{t^2}\right) \\
 	   \langle \ch_3 \rangle ^{\Hilb}&=\frac{q^2}{6t}\cdot \exp\left(\frac{q}{t^2}\right)\\   
 	    \langle \ch_4 \rangle ^{\Hilb}&=-\frac{q^2}{16} +\sum_{n\geq 4} \frac{1}{16}\frac{(n-3)q^n}{(n-2)!t^{2(n-2)}} -\frac{5q^3}{144t^2}\cdot\exp\left(\frac{q}{t^2}\right) \\
 	    \langle \ch_5 \rangle ^{\Hilb}&=\frac{q^2t}{60} - \sum_{n\geq 4} \frac{1}{60}\frac{(n-3)q^n}{(n-2)!t^{2(n-2)-1}}  -\frac{q^3}{60t}\cdot\exp\left(\frac{q}{t^2}\right)  \\
 	     \langle \ch_6 \rangle ^{\Hilb}&=-\frac{q^2t^2}{288}+\frac{77q^3}{4320}-\sum_{n\geq 5} \frac{77}{4320}\frac{(n-4)q^n}{(n-3)!t^{2(n-3)}}\\
 	     &-\sum_{n\geq 5}\frac{1}{576} \frac{q^n}{(n-2) \cdot (n-5)! t^{2(n-3)}}+\frac{77q^4}{4320t^2}\cdot \exp\left(\frac{q}{t^2}\right). 
 	\end{align*}
 
They were verified for small values of $n$ on a computer.  It was communicated to the author that the integrals considered above may also be attainable from the $K$-theoretic calculations of \cite{Arb,Arb2} and \cite{WZ}. 
Although, it is unclear  if this will give the same  formulas. 
 
 Unfortunately, our methods (at least, in their current form) cannot be applied to compute integrals of $\ch_k$ against the Euler class of $\Hilb_n(\BC^2)$ considered in \cite{Ok}. This is due to the same reason that Corollary \ref{strangecor} gives a tautology for $2n\leq 2k+2$, i.e.\ the $I$-functions are of too high degree for all Fulton--MacPherson correction terms except the first one, which gives back the invariant we try to compute because the $\psi$-class vanishes on $\FuM_{[1]}(\BC^2)=\BC^2$. 
\subsection{Torus fixed locus of $\FuM_{[n]}(\BC^2)$ and 
$\Mbar_{0,n+1}$} \label{sector} Of course, integrals on $\Hilb(\BC^2)$ equivariant with respect to the full torus $T=(\BC^*)^2$ are more appealing. In fact, using Corollary \ref{maincor} together with Corollary \ref{maincor2}, one can refine all of the formulas above to $(\BC^*)^2$-equivariant ones without analysing the $(\BC^*)^2$-fixed points of $\FuM_{[n]}(\BC^2)$. We, however, postpone it to the future work, ending this section with the analysis of $(\BC^*)^2$-fixed points of $\FuM_{[n]}(\BC^2)$ instead. We find it very interesting on its own, as it is expressible in terms of moduli spaces of marked genus-zero curves.


 There are two natural subloci of $\FuM_{[n]}(\BC^2)$ given by FM compactifications on the $x$- and $y$-axis of $\BC^2$, which we denote by $\BC_{0}$ and $\BC_{1}$. More precisely, we define 
\[ \FuM^i_{[n]} \subset \FuM_{[n]}(\BC^2), \quad i\in \BZ_2,\]
as the closure of the configuration space of points of an axis, $C_{[n]}(\BC_{i})$, inside $\FuM_{[n]}(\BC^2)$. By the universality of blow-ups,  it is naturally isomorphic to the FM space of an affine line, 
 \[ \FuM^i_{[n]}= \FuM_{[n]}(\BC).\] 
By \cite[Proposition 3.4.3]{CGK}, taking the sublocus of $\FuM^i_{[n]}$ where all marked points of FM degenerations lie on bubbles over the origin $0\in \BC_{i}$, we obtain the moduli space of genus zero curves inside $\FuM_{[n]}(\BC^2)$, 
\[ \Mbar^i_{0,n+1} \subset  \FuM_{[n]}(\BC^2),\]
where, as before, the superscript $i \in \BZ_2$ indicates the axis of $\BC^2$, and the extra marking comes from the point of attachment  a genus-0 curve to $ \BC_i$. 
More generally, we can iterate this construction by consecutively attaching  $\Mbar^i_{0,n+1}$ and $\Mbar^{i+1}_{0,n+1}$ to each other, such that at each new attachment we change the axis. For example, let $N_1 \cup N_2=[n]$ be a partition, then  there is a locus 
\[\Mbar^0_{0,N_1 \cup \{p_\infty,p_0\}} \times \Mbar^1_{0,N_2\cup \{p_0\}} \subset \FuM_{[n]}(\BC^2), \]
whose generic point lie on a FM degeneration 
\[\mathrm{Bl}_{0} (\BC^2) \sqcup_{E_1=D_1} \mathrm{Bl}_{p_2}(\p^2)\sqcup_{E_2=D_2} \p^2,\]
 such that points marked by $N_1$ lie on $x$-axis of the first bubble $\mathrm{Bl}_p(\p^2)$, while the points marked by $N_2$ lie on the $y$-axis of the second bubble $\p^2$. Note that ``lying on a $x$-axis or $y$-axis of a bubble" is a notion invariant with respect to automorphisms of bubbles, as it depends on the intersection of the axis with the divisor at infinity. More precisely, these embeddings are given by the restriction of a gluing map, 
\[ \FuM_{N_1\cup \{p_0\}}(\BC^2) \times T_{N_2} \rightarrow \FuM_{[n]}(\BC^2), \]
 to the loci  $\Mbar^0_{0,N_1\cup \{p_\infty, p_0\}} \times \Mbar^1_{0,N_2\cup\{p_0\}}$ on the first and the second factor, respectively, where $T_{N_2}$ is the locus considered in the previous subsection. The gluing map above attaches exceptional divisors of blow-ups of the marked point $p_0$ to the distinguished divisor at infinity of trees of projective spaces in $T_{N_2}$ considered, for example, in \cite[Section 3.3]{CGK}. Note that this is not the same loci as the one obtained by restricting to same axis on both factors, $\Mbar^i_{0,N_1\cup \{p_\infty, p_0\}} \times \Mbar^i_{0,N_2\cup\{p_0\}}$, since the later  is just a boundary loci of   $\Mbar^i_{0,n+1}$. 

There is a convenient way to label such loci. It is provided by rooted-tree graphs with leaves.

\begin{defn} Let $\Gamma^{i}$ be a rooted-tree graph with $n$ leaves. We label the tree by $i\in \BZ_2$.  Let $V(\Gamma^i)$, $E(\Gamma^i)$ and $L(\Gamma^i)$ be the set of vertices, edges and leaves of $\Gamma^i$, respectively, such that $L(\Gamma^i)=[n]$; $E(v)$ and $L(v)$ are edges and leaves connected to a vertex $v \in V(\Gamma^i)$.   We denote the root vertex by $v_\infty$, the unique edge of a vertex $v$ directed to $v_\infty$ by $e_\infty(v)$, and  the number of vertices between $v_\infty$ and $v$, including $v$ itself, by $\ell(v)$.  We require that $\Gamma^i$ is stable: $\mathrm{val}(v_\infty)\geq 2$ and  $\mathrm{val}(v)\geq 3$ for all other vertices $v$. 
	
\end{defn} 

By using the loci  $\Mbar^i_{0,k}$, for each graph $\Gamma^i$, we obtain a locus 
\[\Mbar_{\Gamma^i} \subset \FuM_{[n]}(\BC^2),\]
which is constructed as follows.  Consider the gluing map
\begin{multline*}
  \FuM_{ L(v_\infty)\cup E(v_\infty)}(\BC^2) \times\prod_{v \in V(\Gamma^i) \setminus \{v_\infty\} }T_{L(v)\cup E(v)\setminus \{e_\infty(v)\}}
  \rightarrow \FuM_{{[n]}}(\BC^2),
  \end{multline*}
such that for each edge $e\in E(\Gamma^i)$, components corresponding to vertices that $e$ connects are attached to each other via the exceptional divisor associated to the blow-up of a point and the divisor at infinity; the latter we treat as a marking associated to $e_\infty(v)$. We then restrict the gluing map to

\[\Mbar^i_{0, L(v_\infty)\cup E(v_\infty) \cup \{p_\infty\}} \times\prod_{v \in V(\Gamma^i) \setminus \{v_\infty\} } \Mbar^{i+\ell(v)}_{0, L(v)\cup E(v)},\]
this is where the superscript $i\in \BZ_2$ in $\Gamma^i$ enters the construction.  The image of the gluing map applied to the locus above is $\Mbar_{\Gamma^i}$. 

The importance of these loci is in the fact that $T$-fixed loci of $\FuM_{[n]}(\BC^2)$ are built out of them. 
\begin{prop} Let $T=(\BC^*)^2$, we have
	\[\FuM_{[n]}(\BC^2)^G=\coprod_{\Gamma^0} \Mbar_{\Gamma^0} \sqcup \coprod_{\Gamma^1 } \Mbar_{\Gamma^1}.\]	
\end{prop}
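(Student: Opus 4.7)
The plan is to classify the $T$-fixed points of $\FuM_{[n]}(\BC^2)$ by combining three ingredients: a $T$-invariant FM degeneration of $\BC^2$ must be rooted at the unique $T$-fixed point $0 \in \BC^2$; on each bubble, a $T$-fixed configuration of marked points lies on one of the two $T$-invariant affine lines through the interior fixed point; and contiguous same-axis bubbles collapse, as points of $\FuM_{[n]}(\BC^2)$, into boundary strata of a single $\Mbar^i_{0,*}$, so that only alternating axis transitions yield new strata, naturally indexed by rooted trees $\Gamma^i$.

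First I would show that a $T$-fixed point $(W, \underline{p})$ has $W$ literally $T$-equivariant: the automorphism group of $(W, \underline{p})$ is finite by FM stability, so the connected group $T$ must preserve each bubble setwise. Iterating the FM blow-up construction, using that $0$ is the only $T$-fixed point in the smooth locus of $\BC^2$, produces a tree of bubbles $B \cong \p^2$ rooted at $0$ and attached recursively at interior $T$-fixed points. Each such $B$ carries two $T$-invariant affine lines through its interior fixed point, arising from the two tangent weights of the parent at the blow-up centre.

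Next I would analyse $T$-fixed configurations of markings on a single bubble $B$. Automorphisms of $B$ that fix its divisor at infinity pointwise form the group $\BC^\times \ltimes \BC^2$ of scalings and translations of the affine chart; for the configuration class to be $T$-fixed, for each $t \in T$ there must exist such an automorphism $\phi_t \colon (x,y) \mapsto ((x + \alpha)/\gamma, (y + \beta)/\gamma)$ that undoes the $T$-action $(x,y) \mapsto (t_1 x, t_2 y)$ on the markings. Writing this equation out and using that the markings are pairwise distinct forces either $\gamma = t_1$ with all $y$-coordinates equal, or $\gamma = t_2$ with all $x$-coordinates equal; after a translation one may take this common coordinate to be $0$, so the markings lying in the smooth locus of $B$ lie on one of its two axes, while markings accumulated at the interior fixed point are resolved recursively onto child bubbles. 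The main obstacle is making this dichotomy global: one must iterate it over the entire bubble tree with a careful induction on the number of bubbles, to rule out more exotic $T$-invariant behaviour.

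Finally I would translate the resulting axis-per-bubble data into the combinatorics of $\Gamma^i$. A maximal chain of consecutive bubbles sharing the same axis is equivalent, as a point of $\FuM_{[n]}(\BC^2)$, to a boundary stratum of a single $\Mbar^i_{0,*}$ embedded along the corresponding axis of $\BC^2$, by the ``not the same loci'' remark preceding the proposition. Distinct strata of the $T$-fixed locus are therefore indexed by a stable rooted tree $\Gamma$ whose vertices correspond to maximal same-axis chains, together with a root axis $i \in \BZ_2$; the stability conditions $\mathrm{val}(v_\infty) \geq 2$ and $\mathrm{val}(v) \geq 3$ follow from the stability of the $\Mbar_{0,*}$ at each vertex. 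Evaluating the gluing map
\[
\FuM_{L(v_\infty) \cup E(v_\infty)}(\BC^2) \times \prod_{v \neq v_\infty} T_{L(v) \cup E(v) \setminus \{e_\infty(v)\}} \longrightarrow \FuM_{[n]}(\BC^2)
\]
on the sublocus $\Mbar^i_{0, L(v_\infty) \cup E(v_\infty) \cup \{p_\infty\}} \times \prod_{v \neq v_\infty} \Mbar^{i + \ell(v)}_{0, L(v) \cup E(v)}$ manifestly parametrises the locus associated to $\Gamma^i$, and disjointness across distinct $\Gamma^i$ is immediate from the tree combinatorics and axis labels, yielding the claimed decomposition.
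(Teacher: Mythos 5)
Your proposal is correct and follows essentially the same route as the paper: identify the fixed bubble tree over $0\in\BC^2$, use the residual automorphism group $\BC^*\ltimes\BC^2$ of each bubble to force all special points onto one of the two coordinate axes, and then organise maximal same-axis chains (which are boundary strata of a single $\Mbar^i_{0,*}$) into the rooted trees $\Gamma^i$. The paper leaves the per-bubble dichotomy to ``contemplation,'' whereas you make the computation with $\phi_t$ explicit, but the underlying argument and the resulting decomposition are identical.
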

\begin{proof} Once points are on the bubbles, there are $\BC^*$-scaling  automorphisms of bubbles.  Hence if points lie on an axis, then they are fixed by the torus $T$. For example, on $W=\mathrm{Bl}_0(\BC^2) \sqcup_{E=D}\p^2$, the torus $T$ scales $\p^2$ by \[(t_1,t_2) \cdot [x, y, z]= [t_1x, t_2y, z],\]
but since there are $\BC^*$-scaling automorphisms of $\p^2$, the fixed locus includes $n$-tuples of points with coordinates  $[x, 0, 1]$ or $[0,y,1]$, but not both. 
	
	 Conversely, if they do not lie on the axis, then they are moved by the torus. After some contemplation, one can see that all possible configurations of points lying on axes  are give by the loci $\Mbar_{\Gamma^i}$. Moreover, all of $\Mbar_{\Gamma^i}$ are distinct. In particular, none of $\Mbar_{\Gamma^i}$ is a sublocus of another $\Mbar_{\Gamma'^{i'}}$, because we alternate the $x$- and $y$-axis in the construction. Points lying on one axis cannot deform to points lying on another axis inside the $T$-fixed locus.   
	\end{proof}



\section{On $\psi$-classes} \label{comp}

\subsection{Comparison of $\psi$-classes} 

In higher dimensions, there are essentially three different definitions of $\psi$-classes. One is more natural for the proof of the wall-crossing formula, Section \ref{psiexc}. The other two are easier to think about or to do computations with. In this section, we show that all of them are equivalent.  

Recall that the boundary of $\FuM_{[n]}(X)$ is a simple normal crossing divisor, 
\[\partial \FuM_{[n]}(X)= \FuM_{[n]}(X)\setminus C_{[n]}(X)= \bigcup_{|I|\geq 2} D_I,\] 
where a general point of $D_I$ is a marked FM degeneration $(W, \underline{p})$, such that 
\[ W=\mathrm{Bl}_p(X) \sqcup_{D=E} \p^d, \]
and $p_i$ lies on $\p^d$, if $i \in I\subseteq [n]$. We refer to \cite{FM} for more details on these divisors. 
\begin{lemma} \label{Li} On $\FuM_{[n]}(X)$, we have the following identity, 
	
	\begin{equation*}
		\BL_i=(\pi_i^*T^\vee_X)(\sum_{i\in I} D_I).
	\end{equation*}
	
\end{lemma}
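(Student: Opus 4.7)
The strategy is to build a natural morphism relating the two sides, show it is an isomorphism generically, compute its vanishing locus along the boundary, and conclude by checking that the order of vanishing is exactly one.

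The universal contraction $q_\CW \colon \CW \to X$ is a morphism of smooth spaces, so it induces a pullback of differentials $q_\CW^* \Omega_X \to \Omega_\CW$. Composing with the quotient $\Omega_\CW \to \Omega_{p_\CW}$ yields a morphism of rank-$d$ vector bundles
\[
\phi \colon q_\CW^* \Omega_X \longrightarrow \Omega_{p_\CW}
\]
on $\CW$. Pulling back along the $i$-th section $s_i \colon \FuM_{[n]}(X) \to \CW$ produces
\[
s_i^* \phi \colon \pi_i^* \Omega_X \longrightarrow \BL_i.
\]
Over the open configuration space $C_{[n]}(X) \subset \FuM_{[n]}(X)$, the universal family is $X \times C_{[n]}(X)$ and $q_\CW$ is the projection to $X$, so $s_i^*\phi$ is an isomorphism.

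Next I would analyze what happens at the boundary divisors. At a generic point of $D_I$ with $i \in I$, the universal fiber is a degeneration $W = \mathrm{Bl}_p(X) \sqcup_{E=D} \p^d$, the $i$-th marking lies on the end component $\p^d$, and $q_\CW$ contracts $\p^d$ to the point $p \in X$. Hence the whole differential $dq_\CW$ vanishes at $s_i$ on that fiber, and therefore $s_i^* \phi$ vanishes identically along $D_I$. Since this holds for every $I \ni i$, the morphism $s_i^* \phi$ vanishes along the divisor $\sum_{i \in I} D_I$, and we obtain a factorisation through a morphism of rank-$d$ vector bundles
\[
\widetilde\phi \colon \pi_i^* \Omega_X \otimes \CO\Big(\sum_{i \in I} D_I\Big) \longrightarrow \BL_i,
\]
which is still an isomorphism on $C_{[n]}(X)$.

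The final step is to show that $\widetilde\phi$ is an isomorphism everywhere, equivalently that its determinant does not vanish on any further divisor. I would do this by a local coordinate computation at a generic point of each $D_I$ with $i \in I$, using the description of the universal family as an iterated blow-up of $\FuM_{[n]}(X) \times X$ along (proper transforms of) the diagonals, following \cite{FM}. Locally near such a point the contraction $q_\CW$ is modelled on a single blow-up: the exceptional $\p^d$ is introduced by one blow-up of a point on the main component, so in suitable coordinates $\phi$ is given by multiplication by a local equation of $D_I$ in each of the $d$ fibre directions and drops rank by $d$ along $D_I$ in exactly first order. Hence $\widetilde\phi$ has full rank at the generic point of every $D_I$ with $i \in I$, and since it is automatically an isomorphism at the generic point of any other boundary divisor (where its vanishing has already been accounted for), it is an isomorphism in codimension one. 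As both sides are vector bundles on the smooth variety $\FuM_{[n]}(X)$, this forces $\widetilde\phi$ to be an isomorphism, yielding the claimed identity. The main obstacle is the vanishing-order computation in the last step: one has to check that when several boundary divisors meet, the rank drop of $\phi$ is still the sum of simple rank drops along each divisor, which requires unwinding the FM blow-up construction at boundary strata; this is straightforward but technical.
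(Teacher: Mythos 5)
Your proof is correct, and it arrives at the same key computation as the paper by a slightly different route. The paper proves the identity by induction on $n$ via the forgetful map $\mathrm{fg}_n\colon \FuM_{[n]}(X)\to\FuM_{[n-1]}(X)$: at each stage only the single divisor $D_{i,n}$ has to be handled (the other $D_I$ are accounted for by the pullback formula $\mathrm{fg}_n^*D_I=D_I+D_{I\cup\{n\}}$), and the first-order vanishing along $D_{i,n}$ is pinned down by a formal-neighbourhood computation on $\mathrm{Bl}_0(\BC^d\times\Delta)$ --- which is exactly your single-blow-up local model, where $q_\CW^*\,dy_j\equiv t\,du_j$ modulo $dt$. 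Your version replaces the induction by the globally defined comparison map $s_i^*\bigl(q_\CW^*\Omega_X\to\Omega_{p_\CW}\bigr)$ coming from the differential of the universal contraction, which lets you treat all $D_I$ with $i\in I$ simultaneously; this is arguably cleaner, at the cost of having to know that the universal family near a generic point of an arbitrary $D_I$ is still modelled on a single blow-up (true, since there the fibre has exactly one bubble). One remark: the ``main obstacle'' you flag at the end is not actually an obstacle. Once $\widetilde\phi$ is an isomorphism at the generic point of every prime divisor, its determinant is a section of a line bundle whose zero locus has codimension at least two and is therefore empty, so the loci where several boundary divisors meet never need to be examined; the paper invokes exactly this codimension-two principle.
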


\begin{proof}
	
	Let $U= \FuM_{[n]}(X) \setminus D_{i,n}$, and $\mathrm{fg}_n \colon \FuM_{[n]}(X) \rightarrow \FuM_{[n-1]}(X)$ be the forgetful map,  then we have 
	\begin{itemize}
		\item $\mathrm{fg}^*_n (\BL_i)_{\mid U}=\BL_{i\mid U}$, 
		\item $\BL_i=T_X$ on $\FuM_{\{i\}}(X)=X$. 
	\end{itemize}
	Since $\FuM_{[n]}(X)$ is smooth, the above identification extends from $U$ to some morphism on the whole $\FuM_{[n]}(X)$ after a twist by a power of $\CO(D_{i,n})$,
	\[\phi \colon \mathrm{fg}^*_n (\BL_i)(kD_{i,n}) \rightarrow \BL_{i}.   \]
	Moreover, if it is an isomorphism away from a codimension $2$ locus for some $k$, then it is an isomorphism. We also have the following relation for the pullback of divisors $D_{I}$, 
	\[ \mathrm{fg}^*_{n}D_{I}= D_{I} +D_{I \cup \{n\} }.\] 
	  By the  induction on $n$, to prove the claim, it is therefore enough to show that  $\phi$ is an isomorphism away from a codimension $2$ locus for $k=1$. 
	We can check this in a formal neighbourhood around a point supported on $D_{i,n}$ away from the other boundary strata.

	By the simple normal crossing condition, the formal neighbourhood  of the pair $(\FuM_{[n]}(X), D_{i,n})$ away from the other boundary strata looks like
	\[\mathrm{Spf}(\BC[\![x_1, \dots,x_{nd} ]\!]), \quad \CO(-D_{i,n})=(x_1).\] 
	We therefore have to show that  the restriction of $\BL_i^\vee$  is equal\footnote{More precisely, it has to be isomorphic to $(x_1)^{\oplus d}$, such that away from $D_{i,n}$ the isomorphism is the identity.  } to $(x_1)^{\oplus d}$. To do so, we can further restrict to 
	\[\Delta=\mathrm{Spf}(\BC[\![x_1, \dots,x_{nd} ]\!]/(x_2,\dots, x_{nd})).\]

 Consider the blow-up $\mathrm{Bl}_{0}(\BC^{d} \times \Delta)$ at the origin of $\BC^{d} \times \Delta$.  Let 
	\[ \iota \colon \Delta \hookrightarrow \mathrm{Bl}_{0}(\BC^{d} \times \Delta), \quad \pi \colon \mathrm{Bl}_{0}(\BC^{d} \times \Delta) \rightarrow \Delta  \]
	be the total transform of $\{0\}\times \Delta$ and the natural projection, respectively. Then the restriction of $\BL^\vee_i$ to $\Delta$ is of the following form, 
	\[ \BL^\vee_{i|\Delta}=\iota^* T_\pi.\]
	By restricting to the strict transform of the hyperplane axis $\BC^{d-1} \times \Delta \hookrightarrow \mathrm{Bl}_{0}(\BC^{d} \times \Delta)$,
	we have a split exact sequence 
	\[ 0 \rightarrow  \iota^*T_{\pi'}  \rightarrow \iota^*T_\pi \rightarrow \iota^*\CN\rightarrow 0,\]
	where $\CN$ is the normal bundle of the strict transform of the hyperplane axis, and $\pi'\colon \mathrm{Bl}_{0}(\BC^{d-1}\times \Delta) \rightarrow \Delta$ is the natural projection. 
	 The term $\iota^*\CN$ can be easily determined,
	\[ \iota^*\CN=(x_1).\]
	On the other hand, for $d=1$, 
	\[\iota^*T_\pi=(x_1).\]
	By induction on $d$, we therefore obtain that 
	\[  \BL^\vee_i=(x_1)^{\oplus d},\]
	which implies the claim. 
	\end{proof}

\subsection{Exceptional divisors and $\psi$-classes} \label{psiexc}

Consider $\FuM_{[n]}(X)$ together with the universal FM degeneration and the universal sections, 
\[
\begin{tikzcd}[row sep=normal, column sep = normal]
	\CW  \arrow[r] &  \FuM_{[n]}(X), \arrow[l,bend right=25, swap, "s_i"] &\hspace{-0.8cm} i=1,\dots, n.
\end{tikzcd}
\]
Let $\CW(n)$ be the $\FuM_{[n]}(X)$-relative blow-up of sections $s_i$. Fibers of $\CW(n)$ are FM degenerations blown-up at $n$ distinct smooth points. Let 

\[ \CE_i \hookrightarrow\CW(n), \quad i=1,\dots , n\]
be the exceptional divisor associated to the blow-up at the section $s_i$. The relative normal bundle of the section $s_i$ is $\BL_i$. Hence by Lemma \ref{Li}, we have the identification, 
\[ \CE_i = \p(\BL^\vee_i)\cong  \p(\pi_i^*T_X),\]
which therefore induces  a map, 
\begin{equation} \label{ident2}
	\CE_i  \rightarrow \p(T_X),
\end{equation}
providing the identification of fibers of $\CE_i$ over $\FuM_{[n]}(X)$ with $\p(T_{X,x})$.  We denote the $\FuM_{[n]}(X)$-relative normal bundle of $\CE_i$ inside $\CW(n)$ by $\CN_{\CE_i}$, and the pullback of the relative tautological bundle $\CO_{\p(T_X)}(1)$ by the map (\ref{ident2})  from $\p(T_X)$ by $\CO_{\CE_i}(1)$.  We define the cotangent bundle associated to $\CE_i$ and the corresponding $\psi$-classes as follows, 
\[ \BL(\CE_i)^\vee:= p_{\CW(n)*}(\CN_{\CE_i} \otimes \CO_{\CE_i}(1)),  \quad \psi(\CE_i):=\mathrm{c}_1(\BL(\CE_i)).\]
\begin{cor}  \label{corpsi} We have, 
	\[ 	\BL(\CE_i)=\CO(\sum_{i\in I} D_I), \]
	which, in particular, implies that
	\[ \psi_i=\sum_{i\in I} D_I=\mathrm{c}_1(\BL(\CE_i)).\]
\end{cor}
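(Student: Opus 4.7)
The plan is to compute $\BL(\CE_i)$ by matching the two a priori different tautological line bundles living on $\CE_i$: the relative normal bundle $\CN_{\CE_i}$, which arises naturally from the description $\CE_i = \p(\BL_i^\vee)$ as an exceptional divisor, and the distinguished $\CO_{\CE_i}(1)$ coming from the identification $\CE_i \cong \p(\pi_i^*T_X)$ provided by Lemma~\ref{Li}. The two agree up to precisely the twist appearing in Lemma~\ref{Li}, and the cancellation of their tautological $\CO(\pm 1)$ factors will leave the desired twist by boundary divisors.

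First I would invoke the standard blow-up formula: since the section $s_i$ is embedded in $\CW$ with relative normal bundle $\BL_i^\vee = s_i^* T_{p_\CW}$, the exceptional divisor is canonically $\CE_i = \p(\BL_i^\vee)$ and its $\CW(n)$-relative normal bundle is $\CN_{\CE_i} \cong \CO_{\p(\BL_i^\vee)}(-1)$. Next, Lemma~\ref{Li} gives $\BL_i^\vee \cong \pi_i^*T_X \otimes \CO(-\sum_{i\in I}D_I)$, and the standard twist rule $\CO_{\p(V\otimes L)}(-1) \cong \CO_{\p(V)}(-1)\otimes q^*L$ then yields
\[
\CN_{\CE_i} \cong \CO_{\p(\pi_i^*T_X)}(-1)\otimes q^*\CO(-\textstyle\sum_{i\in I}D_I),
\]
where $q\colon \CE_i \to \FuM_{[n]}(X)$ is the projective bundle projection.

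On the other hand, the map~(\ref{ident2}) is simply the base change of $\p(T_X) \to X$ along $\pi_i$, so the pullback of $\CO_{\p(T_X)}(1)$ is $\CO_{\CE_i}(1) = \CO_{\p(\pi_i^*T_X)}(1)$. The tensor product $\CN_{\CE_i}\otimes\CO_{\CE_i}(1)$ therefore telescopes to the pure pullback $q^*\CO(-\sum_{i\in I}D_I)$. Since $q$ is a $\p^{d-1}$-bundle with $q_*\CO_{\CE_i} = \CO_{\FuM_{[n]}(X)}$ and vanishing higher direct images, the projection formula gives
\[
\BL(\CE_i)^\vee = p_{\CW(n)*}(\CN_{\CE_i}\otimes\CO_{\CE_i}(1)) \cong \CO(-\textstyle\sum_{i\in I}D_I),
\]
which is the first claim. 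The identity $\psi_i = \sum_{i\in I} D_I = c_1(\BL(\CE_i))$ then follows by taking first Chern classes and comparing with the expression for $\psi_i$ implied by Lemma~\ref{Li}.

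I expect the main obstacle to be simply the bookkeeping of projectivization conventions --- in particular, fixing the sign in the twist rule $\CO_{\p(V\otimes L)}(\pm 1) \cong \CO_{\p(V)}(\pm 1)\otimes q^*L^{\mp 1}$ so that it aligns with both the blow-up formula $\CN_{\CE_i}\cong\CO_{\p(\BL_i^\vee)}(-1)$ and the paper's definition of $\CO_{\CE_i}(1)$ through the map~(\ref{ident2}). Once the conventions are pinned down, the argument is a direct cancellation followed by a trivial projective-bundle pushforward.
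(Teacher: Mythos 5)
Your argument is correct, but it takes a genuinely different route from the paper's. The paper proves $\BL(\CE_i)=\CO(\sum_{i\in I}D_I)$ by re-running the induction of Lemma~\ref{Li}: it records the compatibility $\mathrm{fg}^*_n(\BL(\CE_i))_{\mid U}=\BL(\CE_i)_{\mid U}$ with the forgetful map away from $D_{i,n}$ together with the base case $\BL(\CE_i)=\CO$ on $\FuM_{\{i\}}(X)=X$, and then appeals to the same formal-local computation along $D_{i,n}$ that was used to prove Lemma~\ref{Li}. You instead treat the corollary as a formal consequence of the already-established Lemma~\ref{Li}: you identify $\CN_{\CE_i}$ with $\CO_{\p(\BL_i^\vee)}(-1)$ via the blow-up description, apply the twist rule $\CO_{\p(V\otimes L)}(-1)\cong\CO_{\p(V)}(-1)\otimes q^*L$ to $\BL_i^\vee\cong(\pi_i^*T_X)(-\sum_{i\in I}D_I)$, cancel against $\CO_{\CE_i}(1)=\CO_{\p(\pi_i^*T_X)}(1)$ (which is indeed what the map (\ref{ident2}) pulls back, since it is the base change of $\p(T_X)\to X$ along $\pi_i$), and push forward along the $\p^{d-1}$-bundle. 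Your route buys a cleaner derivation that avoids repeating the induction and the local model, at the cost of having to fix the projectivization and blow-up conventions consistently, which you do correctly; the paper's route is uniform with its proof of Lemma~\ref{Li} and does not require unwinding those conventions. One incidental point in your favour: the paper asserts that the relative normal bundle of $s_i$ is $\BL_i$, whereas your $\BL_i^\vee=s_i^*T_{p_\CW}$ is the correct statement and the one consistent with $\CE_i=\p(\BL_i^\vee)$. The deduction of $\psi_i=\sum_{i\in I}D_I$ from Lemma~\ref{Li} is the same in both arguments.
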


\begin{proof} The first claim follows from the following identities,

\begin{itemize}
	\item $\mathrm{fg}^*_n (\BL(\CE_i))_{\mid U}=\BL(\CE_i)_{\mid U}$, 
	\item $\BL(\CE_i)=\CO$ on $\FuM_{\{i\}}(X)=X$,
\end{itemize}
and exactly the same arguments as in Lemma \ref{Li}. The second claim follows from the first, and Lemma \ref{Li}, 
\[ \psi_i=(\mathrm{c}_1(\BL_i)-\pi_i^*\mathrm{c}_1(X))/d=\sum_{i\in I}D_I.\]
\end{proof}


\subsection{Dilaton and string equations} \label{sectiondil}
Let 
\[\mathrm{fg}_n \colon \FuM_{[n]}(X) \rightarrow \FuM_{[n-1]}(X) \]
be the forgetful map. We have 
\begin{equation} \label{f0}
	\psi_i=\mathrm{fg}_n^*\psi_i+D_{i,n},
\end{equation}
By the construction of $\FuM_{[n]}(X)$ and its universal family $\CW$, \cite{FM}, 
\[ \FuM_{[n]}(X) \cong \mathrm{Bl}_{s_1, \dots, s_{n-1}}(\CW), \]
where $s_i$ are sections of the universal FM degeneration $\CW$ over $\FuM_{[n-1]}(X)$, we blow them up relative to $\FuM_{[n-1]}(X)$. The divisors $D_{i,n}$ are exactly the  exceptional divisors of the blow-ups. Since $\BL^\vee_i$ is the relative normal bundle of $s_i$, 
\begin{equation*} \label{bundary}
	D_{i,n}=\p(\BL_i^\vee).
\end{equation*}
We therefore obtain that
\begin{equation} \label{f2}
	D_{i,n}\cdot D_{i,n}=\mathrm{c}_1(\CO_{\p(\BL_i^\vee)}(-1)).
\end{equation}

On the other hand, by Lemma \ref{Li}, we have 
\[ \p(\BL_i^\vee)\cong \p(\pi_i^*T_X). \]
It is also clear that on $D_{i,n}$ the class  $\mathrm{fg}_n^*\psi_i$ is the pullback of $\psi_i$ from $\FuM_{[n-1]}(X)$ to $D_{i,n}$. Hence, by  Corollary \ref{corpsi}, (\ref{f0}) and (\ref{f2}), we obtain that 
\begin{equation} \label{f11}
	\psi_i \cdot D_{i,n}=-H:=\mathrm{c}_1(\CO_{\p(\pi_i^*T_X )}(-1)).
\end{equation}
We are now ready to prove the dilaton equation. 

\begin{lemma} \label{dilaton}
	The dilaton equation holds, 
	\[\langle \psi_1^{k_1} \gamma_1 \dots \psi_{n}^{k_n}\gamma_n \cdot \tilde{\psi}_{n+1}  \rangle^{\FuM}_{n+1} =(-1)^{d} (\mathrm{c}_{d}(X)-n) \cdot \langle \psi_1^{k_1} \gamma_1 \dots \psi^{k_n}\gamma_n   \rangle^{\FuM}_{n}  .\]	
\end{lemma}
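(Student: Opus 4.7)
The plan is to prove the identity by computing the push-forward along the forgetful morphism
$\mathrm{fg}_{n+1}\colon \FuM_{[n+1]}(X)\to\FuM_{[n]}(X)$
and showing that $\mathrm{fg}_{n+1\,*}(\tilde\psi_{n+1}) = (-1)^d(\mathrm{c}_d(X)-n)$ as a scalar in $H^0(\FuM_{[n]}(X))$; the full statement then follows from the projection formula applied to the remaining insertions.

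I would begin by noting that $\pi_i^{(n+1)} = \pi_i^{(n)}\circ\mathrm{fg}_{n+1}$ for $i \le n$, so each $\pi_i^*\gamma_i$ is a pullback. Using the comparison formula $(\ref{f0})$, I expand $\psi_i^{k_i} = (\mathrm{fg}_{n+1}^*\psi_i + D_{i,n+1})^{k_i}$. The exceptional divisors $D_{i,n+1}$ and $D_{j,n+1}$ for $i\ne j$ are disjoint --- they parametrise the incompatible events of $p_{n+1}$ colliding with $p_i$ versus $p_j$ --- so after multiplying over $i$ only terms supported on a single $D_{i,n+1}$, or none at all, survive. This produces a main piece $\prod_i(\mathrm{fg}_{n+1}^*\psi_i)^{k_i}\pi_i^*\gamma_i\cdot\tilde\psi_{n+1}$ and boundary corrections indexed by $i$.

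The central step is to show that each boundary correction vanishes against $\tilde\psi_{n+1}$. The restriction $\mathrm{fg}_{n+1}|_{D_{i,n+1}}\colon D_{i,n+1}\cong\p(\pi_i^*T_X)\to\FuM_{[n]}(X)$ is a $\p^{d-1}$-bundle; by $(\ref{f11})$ and the $S_2$-symmetry swapping $i$ with $n+1$ (combined with $\pi_i=\pi_{n+1}$ on $D_{i,n+1}$), one has $\psi_i|_{D_{i,n+1}} = \psi_{n+1}|_{D_{i,n+1}} = -H$ where $H = \mathrm{c}_1(\CO_{\p(\pi_i^*T_X)}(1))$. Hence
\[
\tilde\psi_{n+1}|_{D_{i,n+1}} = (-1)^d\sum_{\ell=0}^d H^{d-\ell}\pi_i^*\mathrm{c}_\ell(X).
\]
Each correction restricts to $H^m$ times a class pulled back from $\FuM_{[n]}(X)$ for some $m\ge 0$. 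Applying the Segre push-forward $\pi_*(H^k) = s_{k-d+1}(\pi_i^*T_X)$, the contribution becomes $(-1)^d\pi_i^*\bigl(\sum_\ell \mathrm{c}_\ell(X)\cdot s_{m+1-\ell}(X)\bigr)$, the degree-$(m+1)$ coefficient of $c(X)\cdot s(X) = 1$, which vanishes for every $m\ge 0$.

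Once all corrections are eliminated, the projection formula reduces the integral to $\prod_i\psi_i^{k_i}\pi_i^*\gamma_i\cdot\mathrm{fg}_{n+1\,*}(\tilde\psi_{n+1})$ on $\FuM_{[n]}(X)$. Since $\tilde\psi_{n+1}$ has cohomological degree $2d$ while $\mathrm{fg}_{n+1}$ has relative complex dimension $d$, $\mathrm{fg}_{n+1\,*}\tilde\psi_{n+1}$ lies in $H^0$ and is therefore a scalar, determined by its value on a generic fiber $\tilde W = \mathrm{Bl}_{p_1,\ldots,p_n}(X)$. By Lemma~\ref{Li}, $\BL_{n+1}|_{\tilde W}\cong q^*T_X^\vee\otimes\CO(\sum_iE_i)$ with $q$ the blow-down and $E_i$ the exceptional divisors; a Chern-root expansion combining $E_iE_j=0$ for $i\ne j$, $q^*\mathrm{c}_\ell(X)\cdot E_i = 0$ for $\ell\ge 1$ (since $q|_{E_i}$ is constant), and $\int_{\tilde W}E_i^d = (-1)^{d-1}$ yields the desired scalar $(-1)^d(\mathrm{c}_d(X)-n)$, reading $\mathrm{c}_d(X)$ as the (equivariant) Euler characteristic of $X$. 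The main obstacle is the vanishing of the boundary corrections in the third step: it hinges on recognising the Segre-class cancellation as the identity $c(X)s(X)=1$ and on correctly pinning down $\psi_{n+1}|_{D_{i,n+1}}=-H$ via the involution symmetry.
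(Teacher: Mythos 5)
Your proof is correct and follows essentially the same route as the paper: push forward along $\mathrm{fg}_{n+1}$, kill the $D_{i,n+1}$-corrections using the projective-bundle structure of $D_{i,n+1}\cong\p(\pi_i^*T_X)$, and evaluate $\mathrm{fg}_{n+1*}\tilde\psi_{n+1}$ on a generic fiber $\mathrm{Bl}_{p_1,\dots,p_n}(X)$. The only cosmetic difference is that where you push forward $H^m\cdot\tilde\psi_{n+1}|_{D_{i,n+1}}$ and invoke $c(X)s(X)=1$, the paper observes the stronger (equivalent) fact that $\tilde\psi_{n+1}|_{D_{i,n+1}}$ already vanishes as a class by the Grothendieck relation $H^d+H^{d-1}\mathrm{c}_1(X)+\dots+\mathrm{c}_d(X)=0$.
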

\begin{proof} Firstly, by definition,   
	\[ 
	\tilde{\psi}_{n+1} =\sum^{d}_{\ell=0} (-1)^\ell \psi_{n+1}^{d-\ell}  \pi_{n+1}^*\mathrm{c}_\ell(X).
	\]
	The Chow group of $\p(\pi_{n+1}^*T_X)$ has the following relation,
	\[ 
	H^{d}+H^{d-1}\mathrm{c}_1(X)+\dots+\mathrm{c}_{d}(X)=0,
	\]
	which by (\ref{f11}) implies that
	\[ 
	\tilde{\psi}_{n+1}\cdot D_{i,n+1} = \left( \sum (-1)^{\ell}\psi_{n+1}^{d-\ell} \pi^*_{n+1}\mathrm{c}_\ell(X))\right) \cdot  D_{i,n+1}=0. 
	\]
	Hence by (\ref{f0}), we have
	\begin{align*}
		\langle \psi_1^{k_1} \gamma_1 \dots \psi_{n}^{k_n}\gamma_n \cdot \tilde{\psi}_{n+1} \rangle^{\FuM}_{n+1} 
		&=  \tilde{\psi}_{n+1}  \cdot  \mathrm{fg}^*_{n+1}  (\psi_1^{k_1} \gamma_1) \dots \mathrm{fg}^*_{n+1}  (\psi_{n}^{k_n}\gamma_n)\\
	 &=\mathrm{fg}_{n+1*}(\tilde{\psi}_{n+1}) \cdot    \psi_1^{k_1} \gamma_1 \dots  \psi_{n}^{k_n}\gamma_n.
	\end{align*}
	It remains to compute  $\mathrm{fg}_{n+1*}(\tilde{\psi}_{n+1}) =\mathrm{fg}_{n+1*}\left(\sum (-1)^{\ell}  \psi_{n+1}^{d-\ell} \pi^*_{n+1}\mathrm{c}_\ell(X)\right)$. This can be done on a smooth fiber, since $\FuM_{[n]}(X)$ is connected. A smooth fiber is a blow-up of $X$ in $n$ points, such that the restriction of $-\psi_{n+1}$ is the sum of the exceptional divisors. Hence 
	\begin{align*}
		\mathrm{fg}_{n+1*} \left(\sum  (-1)^\ell\psi_{n+1}^{d-\ell} \pi_{n+1}^*\mathrm{c}_\ell(X)\right)&=\mathrm{fg}_{n+1*} \psi_{n+1}^{d}+(-1)^{d}\mathrm{fg}_{n+1*} \pi^*_{n+1}\mathrm{c}_{d}(X) \\ &=(-1)^{d}\ (\mathrm{c}_{d}(X)-n), 
	\end{align*}
	which concludes the proof. 
\end{proof}

We also can prove a version of the string equation. 

\begin{lemma} \label{string}The string equation holds, 
	\begin{multline*}
		\langle \tilde{\psi}_1\psi_1^{k_1} \gamma_1 \dots  \tilde{\psi}_n\psi_{n}^{k_n}\gamma_n \cdot \mathbb{1}  \rangle^{\FuM}_{n+1} \\
		=(-1)^{d-1}\sum^n_{i=1} \langle \tilde{\psi}_1\psi_1^{k_1} \gamma_1 \dots \widehat{\tilde{\psi}_i} \psi^{k_i}\gamma_i \dots  \tilde{\psi}_n \psi^{k_n}\gamma_n   \rangle^{\FuM}_{n} . 
	\end{multline*}
\end{lemma}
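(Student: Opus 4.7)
The strategy is to mirror the proof of the dilaton equation (Lemma \ref{dilaton}): reduce the integral on $\FuM_{[n+1]}(X)$ to an integral on $\FuM_{[n]}(X)$ via the forgetful map $\mathrm{fg}_{n+1}$, and identify the resulting pushforward class. The first ingredient is the vanishing $\tilde{\psi}_i^{(n+1)}\cdot D_{i,n+1} = 0$ for each $i\in[n]$, proved by the same Chow-ring argument as in Lemma \ref{dilaton}: restricting to $D_{i,n+1}\cong \p(\pi_i^*T_X)$, the relation (\ref{f11}) gives $\psi_i^{(n+1)}|_{D_{i,n+1}} = -H$, and $\pi_i^{(n+1)}$ factors through the projective bundle, so $\tilde{\psi}_i^{(n+1)}|_{D_{i,n+1}} = (-1)^d\sum_\ell H^{d-\ell}\rho^*\mathrm{c}_\ell(X) = 0$. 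Combined with (\ref{f0}) and the factorization $\pi_i^{(n+1)} = \pi_i^{(n)}\circ \mathrm{fg}_{n+1}$ for $i\le n$, this yields
\[
\tilde{\psi}_i^{(n+1)}(\psi_i^{(n+1)})^{k_i}\pi_i^{(n+1)*}\gamma_i = \tilde{\psi}_i^{(n+1)}\cdot \mathrm{fg}_{n+1}^*\big((\psi_i^{(n)})^{k_i}\pi_i^{(n)*}\gamma_i\big),
\]
so by the projection formula the string equation reduces to the cohomological identity
\[
\mathrm{fg}_{n+1*}\Big(\prod_{i=1}^n\tilde{\psi}_i^{(n+1)}\Big) = (-1)^{d-1}\sum_{i=1}^n\prod_{j\neq i}\tilde{\psi}_j^{(n)}.
\]

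The second step is to decompose each $\tilde{\psi}_i^{(n+1)}$ so that this pushforward becomes computable. Using Lemma \ref{Li} together with the pullback formula $\mathrm{fg}_{n+1}^*D_I^{(n)} = D_I^{(n+1)} + D_{I\cup\{n+1\}}^{(n+1)}$ for $I\subset[n]$, a telescoping sum gives
\[
\BL_i^{(n+1)} \cong \mathrm{fg}_{n+1}^*\BL_i^{(n)}\otimes \mathcal{O}(D_{i,n+1}).
\]
Expanding $\mathrm{c}_d(E\otimes L) = \sum_{k=0}^d \mathrm{c}_k(E)\mathrm{c}_1(L)^{d-k}$ then gives
\[
\tilde{\psi}_i^{(n+1)} = \mathrm{fg}_{n+1}^*\tilde{\psi}_i^{(n)} + R_i, \qquad R_i := \sum_{k=0}^{d-1}\mathrm{c}_k(\mathrm{fg}_{n+1}^*\BL_i^{(n)})\,D_{i,n+1}^{d-k},
\]
where $R_i$ is supported on $D_{i,n+1}$. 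The crucial geometric input is the disjointness $D_{i,n+1}\cap D_{j,n+1} = \emptyset$ for $i\neq j$ in $[n]$, coming from the presentation $\FuM_{[n+1]}(X)\cong\mathrm{Bl}_{s_1,\dots,s_n}(\CW_n)$ with pairwise disjoint sections $s_i$. Hence $R_iR_j = 0$, and upon expanding $\prod_i\tilde{\psi}_i^{(n+1)} = \prod_i(\mathrm{fg}_{n+1}^*\tilde{\psi}_i^{(n)} + R_i)$ only the terms involving at most one $R_i$ survive.

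The pure pullback term pushes to zero since $\mathrm{fg}_{n+1}$ has positive fiber dimension, while the remaining terms reduce by projection formula to $\sum_{i_0=1}^n \prod_{j\neq i_0}\tilde{\psi}_j^{(n)}\cdot \mathrm{fg}_{n+1*}R_{i_0}$. Since $D_{i_0,n+1} = \p(\BL_{i_0}^{(n)\vee})$ is a $\p^{d-1}$-bundle over $\FuM_{[n]}(X)$, the standard Segre class computation gives $\mathrm{fg}_{n+1*}(D_{i_0,n+1}^m) = 0$ for $1 \leq m < d$ and $\mathrm{fg}_{n+1*}(D_{i_0,n+1}^d) = (-1)^{d-1}$, so only the $k=0$ term in $R_{i_0}$ contributes and $\mathrm{fg}_{n+1*}R_{i_0} = (-1)^{d-1}$. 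This produces the desired identity. The main obstacle I anticipate is justifying the disjointness $D_{i,n+1}\cap D_{j,n+1} = \emptyset$ and the pullback formula for boundary divisors rigorously from the iterated blow-up construction of the Fulton--MacPherson spaces; everything else is then a formal consequence of Lemma \ref{Li}, (\ref{f0}), (\ref{f11}), and the projection formula.
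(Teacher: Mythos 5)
Your proof is correct and takes essentially the same route as the paper's: both use $\tilde{\psi}_i\cdot D_{i,n+1}=0$ to pull the $\psi_i^{k_i}\pi_i^*\gamma_i$ factors back from $\FuM_{[n]}(X)$, decompose $\tilde{\psi}_i$ as $\mathrm{fg}_{n+1}^*\tilde{\psi}_i$ plus terms supported on $D_{i,n+1}$, invoke the vanishing $D_{i,n+1}\cdot D_{j,n+1}=0$ for $i\neq j$, and finish with $\mathrm{fg}_{n+1*}(D_{i,n+1}^{m})=(-1)^{d-1}\delta_{m,d}$ for $1\le m\le d$. The only cosmetic difference is that you derive the decomposition from $\BL_i^{(n+1)}\cong\mathrm{fg}_{n+1}^*\BL_i^{(n)}\otimes\CO(D_{i,n+1})$ and the twist formula for the top Chern class, whereas the paper expands $\tilde{\psi}_i=\sum_\ell(-1)^\ell\psi_i^{d-\ell}\pi_i^*\mathrm{c}_\ell(X)$ directly using $\psi_i=\mathrm{fg}_{n+1}^*\psi_i+D_{i,n+1}$; the disjointness and divisor pullback formulas you flag as the main obstacle are likewise simply quoted by the paper from the blow-up construction of \cite{FM}.
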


\begin{proof} By the relations in the Chow group of $\p(\pi_{n+1}^*T_X)$, (\ref{f0}) and (\ref{f11}), we have 
	\begin{align*}  
	\tilde{\psi}_i\cdot \psi_i^{k} = \tilde{\psi}_i\cdot \mathrm{fg}_{n+1}^*\psi_i^{k},
	\end{align*}
and by (\ref{f0}) and (\ref{f2}),
	\begin{align*}
	\mathrm{fg}_{n+1*} \tilde{\psi}_i=\mathrm{fg}_{n+1*} \psi_i^{d}&= \mathrm{fg}_{n+1*}(\mathrm{fg}_{n+1}^*\psi_i+D_{i,n+1})^{d}\\
	&= \mathrm{fg}_{n+1*}D_{i,n+1}^{d}=(-1)^{d-1}\mathbb 1_{\FuM_{[n]}(X)}.
	\end{align*}
Also, by the relations on $\FuM_{[n+1]}(X)$,
	\[ 
	D_{i,n+1}\cdot D_{j,n+1}=0,\text{ if } i\neq j. 
	\]
Overall, 
\begin{multline*}
 \langle \tilde{\psi}_1\psi_1^{k_1} \gamma_1 \dots  \tilde{\psi}_n\psi_{n}^{k_n}\gamma_n \cdot \mathbb{1}  \rangle^{\FuM}_{n+1}
 = \prod^{n}_{i=1}  (\mathrm{fg}^*_{n+1}\tilde{\psi}_i+ \dots+ D^d_{i,n+1})\mathrm{fg}^*_{n+1}(\psi_i^{k_i} \gamma_i) \\
 =  \sum^n_{i=1} \mathrm{fg}_{n+1*}(D_{i,n+1}^d) \tilde{\psi}_1\psi_1^{k_1} \gamma_1 \dots \widehat{\tilde{\psi}_i} \psi_i^{k_i}\gamma_i \dots  \tilde{\psi}_n \psi_n^{k_n}\gamma_n   \\
 = (-1)^{d-1}\sum^n_{i=1} \langle \tilde{\psi}_1\psi_1^{k_1} \gamma_1 \dots \widehat{\tilde{\psi}_i} \psi_i^{k_i}\gamma_i \dots  \tilde{\psi}_n \psi_n^{k_n}\gamma_n   \rangle^{\FuM}_{n},
 \end{multline*}
which concludes the proof. 
\end{proof}

\section{Proof of Theorem} \label{secproof}
\subsection{Entanglement of Zhou}

In this section, we prove the main theorem - Theorem \ref{maintheorem}. The proof of Theorem \ref{maintheorem2} is exactly the same. The wall-crossing formula is given by the residue of the torus localisation formula on the master space.  In this subsection we introduce the main ingredient of the proof - entanglement of Zhou \cite[Section 2]{YZ}.  The master space is introduced in Section \ref{sectionmaster}, the fixed components are analysed in Section \ref{secfixed}-\ref{Secnor}. The wall-crossing formula is then proved in Section \ref{Secres}.

For simplicity we assume that $X$ is projective. We comment on how we should adjust the proof for the non-projective equivariant setting in Section \ref{Seceq}

\begin{defn}  A weighted marked FM degeneration $(W, \underline{p}, \underline{n})$ is a FM degeneration $W=\cup^k_{i=1} W_i$ together with a vector of positive integers (weights) $\underline{n}=(n_1, \dots, n_k) \in \BZ^k_{>0}$ attached to each irreducible component $W_i$ of $W$. The total weight of $W$ is defined to be $n=\sum^{k}_{i=1} n_i$. We will often drop $\underline{n}$ from the notation. 
	
	A family of weighted FM degenerations over a base scheme $B$ is a family of FM degenerations $(\CW, \underline{p}) \rightarrow B$ with weights attached to each geometric point  over $B$, such that \'etale locally they are given by multiplicities of (possibly overlapping) sections $\{s_1, \dots, s_k\}$ of $\CW$ at each irreducible component of geometric fibers. We require sections to be contained in the smooth locus of fibers of $\CW$. 

	\end{defn}
	We define 
\[\CF\CM_{[m]}(X,n) \colon (Sch/\BC)^\circ \rightarrow Grpd \]
to be the moduli space of weighted marked FM degenerations of total weight $n$, such the weight of end components is at most $n_0 
\in \BZ$, which we do not indicate in the notation, as it will be clear from the context. Here, $Grpd$ denotes the $2$-category of groupoids.   It is not difficult to see, using the algebraicity of $\CF\CM_{[m]}(X)$, that $\CF\CM_{[m]}(X,n)$ is an algebraic stack.

 Let \[\CZ_k \subset \CF\CM_{[m]}(X,n)\] be a reduced closed substack which parametrizes weighted marked FM degenerations with at least $k$ end components of weight $n_0$. Since $\CZ_1\subset \CF\CM_{[m]}(X,n)$ is a simple normal crossing divisor, in formal neighbourhoods,  $\CZ_k$ is the  intersection of local branches of $\CZ_1$.  

Let $h:=\floor{n/n_0}$ be the maximum number of weight $n_0$ end components of FM degenerations in $\CF\CM_{[m]}(X,n)$, then $\CZ_h$ is the deepest stratum, and therefore is smooth.   Set 
\[\CM_h:=\CF\CM_{[m]}(X,n).\] Let 
\[\CM_{h-1} \rightarrow \CM_h\]
be the blow-up of $\CM_h$ at $\CZ_h$. For $i\in \{0,1,\dots, h-1\}$, we define $\CM_{i-1}$ to be the blow-up of $\CM_{i}$ at the proper transform of $\CZ_i$ in $\CM_{i}$,  denoted by $\CZ_{(i)}$, which is smooth.  We then set 
\[\widetilde{\CF\CM}_{[m]}(X,n):=\CM_0,\]
this is the space of weighted marked FM degenerations with \textit{entangled} end components, its purpose is to remove $\BC^*_z$-scaling automorphisms of end components. 
\begin{defn} 
	Given a  FM degeneration $(W,\underline{p}) \in \CF\CM_{[m]}(X,n)(\BC)$, an \textit{entanglement} of $(W,\underline{p})$ is a point in the fiber of $\widetilde{\CF\CM}_{[m]}(X,n) \rightarrow \CF\CM_{[m]}(X,n)$ over $(W,\underline{p})$. We denote an entanglement of $(W,\underline{p})$ by $( W, \underline{p}, e)$.
\end{defn}

Let $\{P_1, \dots, P_\ell\}$ be a set of the end components of $(W,\underline{p})$ of weight $n_0$. At $ (W,\underline{p}) \in  \CF\CM_{[m]}(X,n)$,  the divisor $\CZ_1$ has local branches $\CH_j$,  where an end component $P_j$ remains intact  (it does not smooth out). Define
\[k:= \mathrm{min}\{ i \mid \text{the image of } ( W, \underline{p},e) \text{ in } \CM_i \text{ lies in } \CZ_{(i)}   \}.\]
The image of the entangled FM degeneration $( W,\underline{p},e)$ in $\CM_i$ lies on the intersection of $k$ branches $\{\CH_{j_1}, \dots, \CH_{j_k}\}$. The corresponding set of end  components $\{P_{j_1}, \dots P_{j_k}\}$ are called \textit{entangled.} 

\begin{example} The simplest instance of the pair $(\CF\CM_{[m]}(X,n), \CZ_1)$ is given by $(\BA^2, Z=\{xy=0\})$ in a  smooth chart. The procedure described above tells us to blow $\BA^2$ at $0 \in \BA^2$. The proper transform $\widetilde{Z}$ of $Z$ is given by two disjoint lines $\BA^1$ intersecting the exceptional divisor $E$ at two distinct points. The entanglement of a point $w \in \BA^2$ is a point $\tilde{w} \in \mathrm{Bl}_0 (\BA^2)$ lying over it. It has two entangled tails, if $\tilde{w} \in E \setminus \tilde Z\cap E$, otherwise it has one entangled tail. 
	\end{example}

\begin{defn} We define the divisor
	\[\CY_i \subset \widetilde{\CF\CM}_{[m]}(X,n)\]
	to the closure of the locus of FM degenerations with exactly $i+1$ entangled end components. 
\end{defn}

\begin{defn} We define the \textit{calibration bundle} on $\widetilde{\CF\CM}_{[m]}(X,n)$, 
	\[ \BM_{\widetilde{\CF\CM}_{[m]}(X,n)}:=\CO(-\CZ_{(1)}).  \]
	We then define the moduli space of FM degenerations with \textit{calibrated} end components,
	\[\BM\widetilde{\CF\CM}_{[m]}(X,n):=\p(\BM_{\widetilde{\CF\CM}_{[m]}(X,n)} \oplus \CO_{\widetilde{\CF\CM}_{[m]}(X,n)}).\]
\end{defn}
A $B$-valued point of $M\widetilde{\CF\CM}_{[m]}(X,n)$ is given by 
\[ ( \CW,\underline{p},e,\CL,v_1,v_2),\]
where 
\begin{itemize}
	\item $( \CW,\underline{p},e) \in \widetilde{\CF\CM}_{[m]}(X,d)(B)$ is a FM degeneration with entanglement,
	\item  $\CL$ is a line bundle on $B$, such that $v_1 \in H^0(B, \BM_B\otimes \CL)$ and $v_2 \in H^0(B,  \CL)$ are sections with no common zeros.
\end{itemize} 
We refer to the data above as a \textit{calibration} of $(\CW,\underline{p})$. 

\subsection{Master space} \label{sectionmaster}

\begin{defn} \label{relative2} Given a wall $\epsilon_0=1/n_0 \in  \BR_{>0}$. A triple $(W, Z, \underline{p})$ is $\epsilon_0$-\textit{semi-weighted}, if 
	\begin{enumerate}
			
				\item $\underline{p} \cap Z=\emptyset$,
				\item $\underline{p} \subset W^{\mathrm{sm}}$ and $Z\subset W^{\mathrm{sm}}$, 
				\item for all $x \in W$, $\ell_x(Z)\leq 1/\epsilon$,
				\item  for all end components  $\p^{d} \subset W$, such that $\underline{p} \cap \p^{d}=\emptyset$,   $\ell(Z_{|\p^{d}})\geq 1/\epsilon$, 
				\item the group $\{g \in \Aut(W, \underline{p}) \mid g^*Z=Z\}$ is finite on ruled components of $W$.
	\end{enumerate}
	
\end{defn}

The difference between Definition \ref{defnweight} and the definition above is that we have a non-strict inequality in the condition (4), $\ell(Z_{|\p^{d}})\geq 1/\epsilon$, and we allow automorphisms on end components in the condition (5).  

Let  
	\[ \CF\CM^{\epsilon_0}_{n,[m]}(X) \colon (Sch/\BC)^\circ \rightarrow Grpd\] 
	be the moduli space of $\epsilon_0$-\textit{semi-weighted} triples $(W,Z,\underline{p} )$. It is an open substack of the relative Hilbert scheme $\Hilb_n(\CW/\CF\CM_{[m]}(X))$, hence is quasi-separated and algebraic. It admits a projection, 
	\begin{align*}
	\CF\CM^{\epsilon_0}_{n,[m]}(X) & \rightarrow \CF\CM_{[m]}(X,n), \\
	(W, Z,\underline{p} ) & \mapsto (W, \underline{p}, \ell(Z) ).
	\end{align*}
We define a moduli space of \textit{$\epsilon_0$-semi-unramified with calibrated end components} of length $n_0$ as the fibred product, 
\[\BM\CF\CM^{\epsilon_0}_{n,[m]}(X):=\CF\CM^{\epsilon_0}_{n,[m]}(X)\times_{\CF\CM_{[m]}(X,n)} \BM\widetilde{\CF\CM}_{[m]}(X,n). \]
A $B$-valued point of this moduli space is given by
\[\xi=(\eta,\lambda)=( \CW, Z, \underline{p},  e, \CL, v_{1}, v_{2}),\]
where $\eta$ is the underlying $\epsilon_0$-semi-weighted triple $(\CW,Z,\underline{p})$, and $\lambda$ is a calibration of $\CW$, 
\[\eta=( \CW,Z, \underline{p}) \ \text{and} \ \lambda=(e, \CL, v_1,v_2).\]

\begin{defn} Given a triple  $(W, Z, \underline{p})$. An end component $P \subset  (W,\underline{p})$ is \textit{constant}, if the support of $Z$ on $P$ consists of one point, or, equivalently, $\Aut_{P}(P,Z_{\mid P})=\BC^*_z$. 
\end{defn}
\begin{defn} A family of $\epsilon_0$-semi-weighted triples $(W, Z, \underline{p})$ with  calibrated end components,
	\[( W,Z, \underline{p}, e, \CL, v_{1}, v_{2}),\]
	is $\epsilon_0$-weighted, if 
	\begin{enumerate}
		\item all constant end component are entangled,
		\item if  $ W$ has an end component $P$ of weight $n_0$, then end components of $W$ contain all points $x$, such that $\ell_x(Z)=n_0$,
		\item if $v_1=0$, then $(W,Z, \underline{p})$ is $\epsilon_+$-weighted,
		\item if $v_2=0$, then $(W,Z,\underline{p})$ is $\epsilon_-$-weighted. 
	\end{enumerate} 
\end{defn}
\begin{defn}We define the \textit{master space},
	\[\BM \FuM^{\epsilon_0}_{n,[m]}(X) \subset \CF\CM^{\epsilon_0}_{n,[m]}(X) ,\]
	to be  the moduli space of  $\epsilon_0$-weighted triples $(W,Z, \underline{p})$ with calibrated end components. 
\end{defn}

\begin{thm} \label{Masproper} The moduli space $\BM \FuM^{\epsilon_0}_{n,[m]}(X)$ is a proper Deligne--Mumford stack. If $d\leq 3$, it has a perfect obstruction theory, given by the complex 
	\[R\CHom_{p_\CW}(\CI,\CI)_0, \]
where $\CI$ is the universal ideal and $p_\CW$ is the universal projection from the universal FM degeneration. 	
\end{thm}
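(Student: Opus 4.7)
The plan is to prove the three claims of the theorem—Deligne--Mumford, properness, and the perfectness of the obstruction theory—in that order, modeling the argument on the analogous statements already established for $\FuM^\epsilon_{n,[m]}(X)$ and importing Zhou's entanglement/calibration machinery to handle the new data.

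First I would check that $\BM\FuM^{\epsilon_0}_{n,[m]}(X)$ is algebraic, quasi-separated, of finite type, and Deligne--Mumford. Algebraicity and quasi-separatedness follow from the construction as an open substack of the fibered product
\[
\CF\CM^{\epsilon_0}_{n,[m]}(X)\times_{\CF\CM_{[m]}(X,n)}\BM\widetilde{\CF\CM}_{[m]}(X,n),
\]
where the first factor is open in $\Hilb_n(\CW/\CF\CM_{[m]}(X))$ and the second is a projective bundle over the blow-up $\widetilde{\CF\CM}_{[m]}(X,n)$. Finite type is inherited from boundedness of the subscheme/weight data. For the DM property, the only potentially infinite automorphisms are the $\BC^*_z$-scalings of a constant end component $P$ (since on ruled components and non-constant end components finiteness is already forced by condition (5) of $\epsilon_0$-semi-weightedness). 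Axiom (1) of $\epsilon_0$-weightedness says such $P$ must be entangled, and entanglement together with the calibration $(\CL, v_1, v_2)$ rigidifies the residual $\BC^*_z$-action: on an entangled end component the scaling acts nontrivially on the branch coordinate of $\CZ_{(1)}$, hence on $\BM_{\widetilde{\CF\CM}_{[m]}(X,n)}$, and the non-vanishing pair $(v_1,v_2)$ cuts this down to a finite stabilizer.

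Next I would prove properness via the valuative criterion for quasi-separated DM stacks of finite type. Let $\Delta$ be a DVR with generic point $\Delta^\circ$ and closed point $0$. Given a generic family
\[
\xi^\circ=(\CW^\circ, \CZ^\circ, \underline{p}^\circ, e^\circ, \CL^\circ, v_1^\circ, v_2^\circ),
\]
I would first replay the existence argument of Proposition for $\FuM^\epsilon_{n,[m]}(X)$ properness to produce an extension of the underlying marked FM degeneration after a finite base change, contracting those end components of the would-be limit that lift to non-end components of nearby fibers. The new difficulty is that one must simultaneously extend the entanglement and the calibration $(\CL, v_1, v_2)$, and decide which end components get weight $n_0$ and whether the central fiber lands in the $v_1=0$, $v_2=0$, or interior locus. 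This is governed by the valuations $\mathrm{val}(v_1)$ and $\mathrm{val}(v_2)$: after normalizing so that $\min(\mathrm{val}(v_1),\mathrm{val}(v_2))=0$, the case $v_2=0$ forces $\epsilon_-$-weightedness (axiom (4)), so one contracts end components of weight $<n_0$ that become unstable in this direction, and symmetrically for $v_1=0$; in the mixed case, axiom (2) ensures that any limit point with $\ell_x(\CZ_0)=n_0$ sits on an end component of weight $n_0$, which is exactly what is needed to extend the Hilbert scheme data by the properness of $\Hilb_n(\CW'/\Delta)$ once $\CW'$ is constructed. Uniqueness follows in each case from the uniqueness of the FM degeneration under its stability (same as before) together with the fact that the entanglement and calibration sections extend uniquely once the underlying degeneration is fixed, because the loci $\CZ_{(i)}$ are divisors with normal bundle computing $\BM$.

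Finally the perfect obstruction theory. Since $\BM\FuM^{\epsilon_0}_{n,[m]}(X)$ is an open substack of the relative Hilbert scheme $\Hilb_n(\CW/\BM\widetilde{\CF\CM}_{[m]}(X,n))$ and $\BM\widetilde{\CF\CM}_{[m]}(X,n)$ is smooth, the relative traceless Atiyah class of the universal ideal $\CI$ furnishes a relative obstruction theory $R\CHom_{p_\CW}(\CI,\CI)_0$ exactly as in the proof stated earlier for $\FuM_{n,[m]}^\epsilon(X)$, and the smooth base produces the absolute version via the cotangent triangle. Perfectness for $d\leq 3$ reduces fiberwise to vanishing of $\Ext^{\geq 3}$ of ideal sheaves of zero-dimensional subschemes on smooth FM degenerations $W$ of dimension $\leq 3$, which follows since such ideals have local projective dimension $\leq d\leq 3$ and $W$ is Gorenstein.

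The main obstacle I anticipate is the properness step, specifically the bookkeeping of which end components must be contracted and which must be retained under the extension, keeping track simultaneously of: the $\epsilon_0$-semi-weightedness inequalities, the entanglement data in $\widetilde{\CF\CM}_{[m]}(X,n)$ (pulled back through the iterated blow-up), and the vanishing loci of $v_1,v_2$. The key verification is that the four axioms of $\epsilon_0$-weightedness are exactly the right conditions to make this combinatorial procedure both possible (existence) and forced (uniqueness), and one has to check this case by case depending on $\mathrm{val}(v_1)-\mathrm{val}(v_2)$.
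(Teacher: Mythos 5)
Your proposal follows essentially the same route as the paper: the paper's own proof is a short sketch deferring properness to the valuative-criterion classification of semi-weighted extensions related by blow-ups/blow-downs of the central fiber (citing \cite[Section 6.4]{NuG} and \cite[Section 5]{YZ}, with the subscheme and calibration extending automatically), and the obstruction theory to the standard arguments for ideal sheaves on families of varieties of dimension $\leq 3$, all of which your more detailed outline is consistent with. One small caution: the vanishing of traceless $\Ext^3$ on a threefold is not a consequence of finite local projective dimension (which only kills $\Ext^{\geq 4}$) but of the Serre-duality argument of \cite{Th, HuT} identifying $\Ext^3(\CI,\CI)_0$ with the dual of $\Hom(\CI,\CI\otimes\omega)_0$; the conclusion is the standard one the paper invokes.
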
  

\textit{Proof.} The proof of the properness is exactly the same as in the case of maps in \cite[Section 6.4]{NuG}, which is a generalisation  \cite[Section 5]{YZ} to higher dimensions. For a given generic fiber, we classify all possible $\epsilon_0$-semi-weighted triples $(\CW,\CZ, \underline{p})$  with calibrated end components over the spectrum of a discrete valuation ring $\Delta$. All of them are related  by blow-ups and blow-downs of the central fiber of the FM degeneration $\CW$, while all other gadgets easily extend, e.g.\ $\CZ$ extends to any blow-up by the properness of relative Hilbert schemes, so does the calibration $\lambda$. The classification then depends of the singularity of end components in the central fiber that were introduced in the process of the blow-up, just like in \cite[Lemma 6.11, 6.12]{NuG}. Among all these $\epsilon_0$-semi-weighted triples, there is a unique one which is  $\epsilon_0$-weighted by the arguments from \cite[Section 5]{YZ}. 

The existence of the perfect obstruction theory follows from standard arguments applies to sheaves on moving varieties of dimension not greater than three. 
  \qed 

\subsection{Fixed loci} \label{secfixed}
There is a natural $\BC^*_z$-action on the space $\BM \FuM^{\epsilon_0}_{n,[m]}(X)$ given by  
\[z \cdot (W,Z,\underline{p}, e, \CL, v_1,v_2)= (W,Z,\underline{p}, e, \CL, z \cdot v_1,v_2), \quad z \in \BC^*_z.\] 
By arguments from \cite[Section 6]{YZ}, an $\epsilon_0$-weighted triple $(W,Z,\underline{p})$ with calibrated end components is $\BC^*_z$-fixed, if and only if it is one of the following: 
\begin{enumerate}
	\item $v_1=0$ and $(W,Z,\underline{p})$ is a $\epsilon_+$-weighted,
	\item $v_2=0$ and $(W,Z,\underline{p})$ is $\epsilon_-$-weighted,
	\item $v_1\neq 0$ and $v_2\neq0 $, and all weight-$n_0$ end components  of $(W,Z,\underline{p})$ are entangled strictly constant end components. 
\end{enumerate}
The $\BC^*_z$-fixed locus decomposes according to the types of triples above, 
\begin{equation} \label{fixed}
\BM \FuM^{\epsilon_0}_{n,[m]}(X)^{\BC^*_z}=F_- \sqcup F_+ \sqcup \coprod_{k\geq 1} F_{k},
\end{equation} 
the exact meaning of the components in the above decomposition is explained below. 
\subsubsection{$F_-$} This component admits the following description,  
\[
F_-= \FuM^{\epsilon_-}_{n,[m]}(X), \quad
N_{F_-}^{\mathrm{vir}}=\BM^{\vee}_-,\]
where $\BM^{\vee}_-$ is the dual of the calibration bundle $ \BM_{\widetilde{\CF\CM}_{[m]}(X,n)}$ on $\FuM^{\epsilon_-}_{n,[m]}(X)$ with a trivial $\BC^*_z$-action of weight $-1$. The obstruction theories also match, therefore 
\[[F_-]^{\mathrm{vir}}=[ \FuM^{\epsilon_-}_{n,[m]}(X)]^{\mathrm{vir}},\]
with respect to the identification above. 
\subsubsection{$F_+$} We define
\[\widetilde{\FuM}^{\epsilon_+}_{n,[m]}(X):=  \FuM^{\epsilon_+}_{n,[m]}(X) \times_{\CF\CM_{[m]}(X,n)} \widetilde{\CF\CM}_{[m]}(X,n),\] 
then
\[
F_+=\widetilde{\FuM}^{\epsilon_+}_{n,[m]}(X),\quad
N_{F_+}^{\mathrm{vir}}=\BM_+,
\]
where $\BM_+$ is the calibration bundle  $\BM_{\widetilde{\CF\CM}_{[m]}(X,n)}$ on $\widetilde{\FuM}^{\epsilon_+}_{n,[m]}(X)$ with a trivial $\BC^*_z$-action of weight $1$. The obstruction theories also match, and 
\[\pi_*[\widetilde{\FuM}^{\epsilon_+}_{n,[m]}(X)]^{\mathrm{vir}}=[ \FuM^{\epsilon_+}_{n,[m]}(X)]^{\mathrm{vir}},\]
where \[\pi \colon \widetilde{\FuM}^{\epsilon_+}_{n,[m]}(X) \rightarrow  \FuM^{\epsilon_+}_{n,[m]}(X) \] 
is the natural projection.
\subsubsection{$F_{k}$} \label{componentswall} These are the \textit{wall-crossing components}, which will account for the difference between $F_-$ and $F_+$.  They parametrize the third type of the fixed points,

\begin{equation*}	
	F_{k}:=	\left\{\xi \  \Bigl\rvert \ \arraycolsep=0.1pt\def\arraystretch{1} \begin{array}{c} \xi \text{ has exactly } k \text{ constant entangled } \\[.001cm] \text{end components of weight $n_0$}  \end{array} \right\}.
\end{equation*}
We now give a more explicit expression for $F_k$. 

\subsection{Wall-crossing components $F_k$ }
Let 
\[ G_X\subset \Aut_X(\p (T_X \oplus \BC))\]
be the group $X$-scheme of $X$-relative automorphisms of $\p (T_X \oplus \BC)$ which fix pointwise the relative divisor at infinity $\p (T X) \subset \p (T_X \oplus \BC)$. We define the \textit{moduli stack of end components}, 
\[\CB:=BG_X = [X/G_X],\]
which carries a universal family $\CP:=[\p (T_X \oplus \BC)/G_X]$  with the universal divisor $\CD:=[\p (T_X)/G_X]$ at infinity, 
\[\CD\hookrightarrow  \CP \rightarrow \CB. \]
Since the group $G_X$ fixes $\p(T_X) \subset \p (T_X \oplus \BC)$ pointwise, there exists a map 
\begin{equation} \label{ident1}
\CD \rightarrow \p(T_{X}), 
\end{equation}
which induces the identification of fibers of $\CD$ over $\CB$ with $\p(T_{X,x})$. 
\\

 Let us denote 
\[n':=n-kn_0.\]
As in Section \ref{psiexc}, we consider $\widetilde{\CF\CM}_{[m+k]}(X,n')$ with the universal FM degeneration together the universal sections, 
\[
\begin{tikzcd}[row sep=normal, column sep = normal]
	\CW  \arrow[r] &  \widetilde{\CF\CM}_{[m+k]}(X,n'), \arrow[l,bend right=25, swap, "s_{m+\ell}"]  &\hspace{-0.8cm} \ell=1,\dots, k.
\end{tikzcd}
\]
Let $\CW(k)$ be the $\widetilde{\CF\CM}_{[m+k]}(X,n')$-relative blow-up of sections $s_{m+\ell}$. Fibers of $\CW(k)$ are FM degenerations blown-up at $k$ distinct smooth points. Let 

\[ \CE_{m+\ell} \hookrightarrow \CW(k), \quad \ell=1,\dots , k\]
be the exceptional divisor associated to the blow-up at the section $s_{m+\ell}$. By Lemma \ref{Li}, it admits a map, 
\begin{equation} \label{ident3}
\CE_{m+\ell}  \rightarrow \p(T_X),
\end{equation}
which induces the identification of fibers of $\CE_{m+\ell}$ over $\widetilde{\CF\CM}_{[m+k]}(X,n')$ with $\p(T_{X,x})$. 

By using (\ref{ident1}) and (\ref{ident3}) to glue $\CW(k)$ with end components $\CP$ at the exceptional divisor and the divisor at infinity, respectively, we obtain the gluing map, 

\[\widetilde{\mathrm{gl}}_k \colon \widetilde{\CF\CM}_{[m+k]}(X,n')\times_{X^k} \CB^k \rightarrow \widetilde{\CF\CM}_{[m]}(X,n).\]

Finally, we are ready to give an explicit expression for wall-crossing components $F_k$ using $\widetilde{\mathrm{gl}}_k$.  Let
\[Y \rightarrow \widetilde{\CF\CM}^{\epsilon_+}_{[m+k]}(X,n')\] be the stack of $k$-roots of $\BM_{\widetilde{\CF\CM}_{[m+k]}(X,n')}$. By exactly the same arguments as in \cite[Proposition 6.7]{NuG}, we have a map 
\begin{equation}\label{themap}
	\widetilde{\mathrm{gl}}_k^*F_{k}  \rightarrow Y \times_{X^k} \prod_{\ell=1}^{k} V_{n_0},
\end{equation}
which is in fact an isomorphism. 

\begin{prop} \label{isomophism0} The map (\ref{themap}) is an isomorphism.
	\end{prop}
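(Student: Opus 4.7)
The plan is to construct an explicit inverse to the map (\ref{themap}) and verify the two compositions are the identity, closely following the strategy of \cite[Proposition 6.7]{NuG} but adapted to the higher-dimensional setting where the end components are $\p(T_X\oplus \BC)$ rather than $\p(T_X\oplus \CO)$ on a curve.

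First I would unpack the data on each side. A geometric point of $\widetilde{\mathrm{gl}}_k^*F_k$ is a $\BC^*_z$-fixed master-space object $(W,Z,\underline{p},e,\CL,v_1,v_2)$ in which exactly $k$ weight-$n_0$ end components $P_1,\dots,P_k$ are entangled and strictly constant. Splitting $W$ along the $k$ attaching exceptional divisors gives a ``core'' FM degeneration $W'$ with $m+k$ markings (the extra $k$ markings being the attaching points) together with the bubbles $P_\ell\cong \p(T_X\oplus\BC)$. On each $P_\ell$, the subscheme $Z_{|P_\ell}$ has length $n_0$, is supported at one point (constancy), and must lie in the affine chart $T_X\subset \p(T_X\oplus\BC)$ since the $\BC^*_z$-action fixes only the zero section of $T_X$ and the divisor at infinity is excluded by the $\epsilon_0$-weightedness. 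This defines a point of $V_{n_0}$ over $\underline{p}'_{m+\ell}\in X$. The calibration $(v_1,v_2)$ is rigidified only up to the simultaneous $\BC^*_z$-scalings on the $k$ constant end components, and this ambiguity is exactly recorded by passage to $Y$, the stack of $k$-th roots of $\BM_{\widetilde{\CF\CM}_{[m+k]}(X,n')}$.

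Next I would define the inverse. Given a family consisting of $(\CW',\CZ',\underline{p}')\in \widetilde{\CF\CM}^{\epsilon_+}_{[m+k]}(X,n')$, a $k$-th-root data on $Y$, and $k$ subschemes $\zeta_\ell\in V_{n_0}$ over the sections $s_{m+\ell}$, I glue a copy of $\CP=\p(T_X\oplus\BC)$ at each $s_{m+\ell}$ by identifying the exceptional divisor $\CE_{m+\ell}\cong \p(T_X)$ with the divisor at infinity $\CD\cong \p(T_X)$ via the identifications (\ref{ident1}) and (\ref{ident3}). The $\zeta_\ell$ extend to length-$n_0$ subschemes of the glued bubbles and, combined with $\CZ'$, yield a length-$n$ subscheme $\CZ$. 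The $k$-th power of the universal root section supplies the calibration pair $(v_1,v_2)$ so that the $\BC^*_z$-weight of $v_1$ matches the sum of weights contributed by the $k$ end component scalings. By construction this output is a $\BC^*_z$-fixed master-space object with $k$ constant entangled end components.

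Finally I would check the two maps are mutually inverse. On the level of sheaves $\CZ$ this is immediate because the gluing and restriction operations are inverse; on the level of the FM degeneration $\CW$ it follows from the blow-up construction of $\widetilde{\CF\CM}_{[m]}(X,n)$ together with the fact that the attaching data at each entangled end component is canonically encoded in the extra marking and the identifications (\ref{ident1}), (\ref{ident3}). The main obstacle is the matching of calibration bundles: one must show that $\widetilde{\mathrm{gl}}_k^* \BM_{\widetilde{\CF\CM}_{[m]}(X,n)}$ pulls back to the $k$-th power of $\BM_{\widetilde{\CF\CM}_{[m+k]}(X,n')}$ on $Y$, equivalently that the entanglement divisor $\CZ_{(1)}$ restricts correctly under $\widetilde{\mathrm{gl}}_k$. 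This can be verified locally using the description of $\widetilde{\CF\CM}_{[m]}(X,n)$ as an iterated blow-up of $\CZ_h\supset \cdots \supset \CZ_1$, reducing to the combinatorial model of nested normal-crossing divisors where the $k$-th root interpretation is standard, exactly as in \cite[Section 2]{YZ}. Once this calibration compatibility is in place, the two constructions are manifestly inverse to each other, giving the required isomorphism of Deligne--Mumford stacks.
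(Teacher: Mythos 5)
Your proposal is correct in outline and follows exactly the strategy the paper invokes: its entire proof of Proposition \ref{isomophism0} is a citation of \cite[Section 6.7]{NuG} and \cite[Lemma 6.5.5]{YZ}, and your cut-and-glue construction of the inverse, the identification of constant end components with $\BC^*_z$-fixed subschemes of $T_X$ (i.e.\ points of $V_{n_0}$), and the matching of the entanglement/calibration data with the $k$-th root stack $Y$ is precisely the content of those arguments. The only caveat is that the exact relation between $\widetilde{\mathrm{gl}}_k^*\BM_{\widetilde{\CF\CM}_{[m]}(X,n)}$ and $\BM_{\widetilde{\CF\CM}_{[m+k]}(X,n')}$ also involves the boundary divisors $\CY_i$ (cf.\ the denominator in Proposition \ref{isomophism} and \cite[Lemma 2.7.3]{YZ}), but you correctly isolate this as the point to check in the local normal-crossing model.
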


\begin{proof} Similar to the analysis of \cite[Section 6.7]{NuG} and \cite[Lemma 6.5.5]{YZ}. 
	\end{proof}

\subsection{The virtual normal complex of $F_k$} \label{Secnor}

We denote the $\CB$-relative normal bundle of $\CD$ inside $\CP$ by $\CN_{\CD}$, and the pullback of the relative tautological bundle $\CO_{\p(T_X)}(1)$ by (\ref{ident1})  from $\p(T_X)$ by $\CO_{\CD}(1)$.  We define the cotangent bundle associated to $\CD$ and the associated  $\psi$-classes as follows, 
\[ \BL(\CD)^\vee:= p_{\CP*}(\CN_{\CD} \otimes \CO_{\CD}(-1)),  \quad \psi(\CD):=\mathrm{c}_1(\BL(\CD)).\]

Given a FM degeneration $(W,\underline{p}) \in \widetilde{\CF\CM}_{[m]}(X,n) $ with end components $P_1, \dots, P_k$. Let $D_\ell$ and $E_\ell$ be the divisor at infinity and the exceptional divisor associated to an end component $P_\ell$. By \cite{Frie}, with respect to the map $\widetilde{\mathrm{gl}}_k$, the space $\Theta_\ell\cong \BC$ of first-order smoothings of  an end component $P_\ell \subset  (W,\underline{p})$ admits a natural identification
\[ \Theta_\ell \cong H^0(D_{\ell}, \CN_{D_{\ell}}\otimes \CN_{E_\ell})\cong (\BL(\CD_\ell)\otimes \BL(\CE_\ell))_{|(W,\underline{p})}^\vee.\]  Hence the fiber of the calibration bundle $ \BM_{\widetilde{\CF\CM}_{[m]}(X,n)}$ over $(W,\underline{p})$ is naturally isomorphic to
\begin{equation} \label{theta}
	(\Theta_1 \otimes \dots \otimes \Theta_k)^\vee.
\end{equation}
The appearance of line bundles $\CL(\CD_\ell)$ and $\CL(\CE_\ell)$  in the expression below  is therefore due to the splitting of simple normal crossing singularities of FM degenerations at end components.

\begin{prop} \label{isomophism} With respect to the isomorphism (\ref{themap}), we have 
	\begin{align*} [\widetilde{\mathrm{gl}}_k^*F_{k}]^{\mathrm{vir}} =& [Y]^{\mathrm{vir}} \times_{X^k} \prod_{\ell=1}^{k} [V_{n_0}]^{\mathrm{vir}},  \\	\frac{1}{e_{\BC^*_z}(\widetilde{\mathrm{gl}}_k^* N_{F_{k}}^{\mathrm{vir}})}&=\frac{\prod^k_{\ell=1}(\sum_j \mathrm{c}_j(X)(z/k+\psi(\CD_\ell))^j) )}{-z/k-\psi(\CD_1)-\psi(\CE_{m+1})-\sum^{\infty}_{i=k}\CY_i} \cdot \prod^k_{\ell=1} \CI_{n_0}(z/k+\psi(\CD_\ell)).
	\end{align*}
\end{prop}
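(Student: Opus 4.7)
The plan is to compare the $\BC^*_z$-fixed and $\BC^*_z$-moving parts of the master-space obstruction theory $R\CHom_{p_\CW}(\CI,\CI)_0$ pulled back to $\widetilde{\mathrm{gl}}_k^*F_k$, using the isomorphism from Proposition \ref{isomophism0} to identify this space with $Y\times_{X^k}\prod_{\ell=1}^k V_{n_0}$. Over a geometric point of this fiber product, the underlying FM degeneration $W$ is the gluing of a base FM degeneration $W'$ (the image in $Y$) to $k$ copies of $\p^d$ (the end bubbles corresponding to the $V_{n_0}$ factors) along $k$ node pairs $E_{m+\ell}\cup D_\ell$. The normalisation exact sequence for $\CI$ along these $k$ nodes yields a distinguished triangle expressing $R\CHom_{p_\CW}(\CI,\CI)_0$ as the cone on a map from the direct sum of the obstruction complexes of the base and of each bubble into the restrictions at the nodes. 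First I would separate this triangle into fixed and moving parts with respect to $\BC^*_z$, which acts trivially on the base component and with full weight on each bubble.

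The fixed part splits as the obstruction theory of $Y$ (inherited from the $\epsilon_+$-obstruction theory of $\widetilde{\FuM}^{\epsilon_+}_{n,[m+k]}(X)$, since $Y$ is the $k$-th root stack of the calibration bundle) plus the $\BC^*_z$-fixed obstruction theories of each $V_{n_0}$ inside $\Hilb_{n_0}(T_X/X)$. This gives the virtual class identity
\[[\widetilde{\mathrm{gl}}_k^*F_k]^{\mathrm{vir}}=[Y]^{\mathrm{vir}}\times_{X^k}\prod_{\ell=1}^k[V_{n_0}]^{\mathrm{vir}}.\]
For the moving part, four sources of deformations must be tallied: (i) the virtual normal complexes $N^{\mathrm{vir}}$ of each $V_{n_0}\subset\Hilb_{n_0}(T_X/X)$, contributing the factors $\prod_\ell \CI_{n_0}(z/k+\psi(\CD_\ell))$ after the standard identification of the equivariant Euler class with the $I$-function integrand; (ii) the first-order smoothings $\Theta_\ell$ at each node, which by \cite{Frie} and the discussion around \eqref{theta} are $(\BL(\CD_\ell)\otimes \BL(\CE_{m+\ell}))^\vee$, and whose $\BC^*_z$-weight is $z/k$ after passing to the $k$-th root $Y$; (iii) the gluing correction along the divisor $\CD_\ell$, giving a factor $\sum_j \mathrm{c}_j(X)(z/k+\psi(\CD_\ell))^j$ from the tangent bundle of $X$ at each attachment point; and (iv) the normal bundle of $F_k$ in the $\p^1$-fibre of the master space.

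Source (iv) is the main technical point and the hardest piece. The $\p^1$-direction has tangent weight $v_1\otimes v_2^{-1}$, which corresponds to the product of all smoothing parameters $\Theta_1^{-1}\cdots\Theta_k^{-1}$ raised to the $k$-th root; selecting the ``first'' entangled bubble (which suffices by the symmetrisation built into $F_k$) yields a weight $-z/k-\psi(\CD_1)-\psi(\CE_{m+1})$. The additional terms $-\sum_{i\geq k}\CY_i$ arise because the iterated blow-up construction of $\widetilde{\CF\CM}_{[m]}(X,n)$ twists the calibration bundle by the exceptional divisors of the strata of deeper entanglement, exactly as in Zhou's entanglement computation \cite[Section 6]{YZ}; these divisors restrict nontrivially to $F_k$ precisely for $i\geq k$. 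Combining (i)–(iv) and taking reciprocals produces the stated formula for $1/e_{\BC^*_z}(\widetilde{\mathrm{gl}}_k^*N^{\mathrm{vir}}_{F_k})$.

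The bulk of the argument is formal once one has set up the normalisation triangle and tracked the $\BC^*_z$-weights; the genuinely new input relative to the curve case of \cite{YZ} is the appearance of the Chern-polynomial factor $\sum_j \mathrm{c}_j(X)(z/k+\psi(\CD_\ell))^j$ in the numerator, which comes from the higher-dimensional gluing datum parameterised by $\CB=BG_X$ and the map $\CD\to \p(T_X)$ of \eqref{ident1}. Verifying that this higher-dimensional gluing contributes precisely the Chern polynomial, rather than some twist of it, is what I expect to require the most care; it can be checked by deformation-theoretic local models, or alternatively derived from the identification $\BL(\CD)\otimes \CO_{\CD}(1)\cong \pi^*T_X|_{\CD}$ implicit in Corollary \ref{corpsi}.
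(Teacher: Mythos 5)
Your proposal is correct in strategy and essentially coincides with what the paper does: the paper's own proof is a one-line citation to \cite[Proposition 6.19]{NuG} and \cite[Lemma 6.5.6]{YZ}, with the remark that the obstruction theory of maps is replaced by that of sheaves, so your sketch is really an expansion of the argument the paper outsources rather than an alternative to it. Your decomposition --- normalisation triangle for $\CI$ along the $k$ attaching divisors (simplified here because $Z\subset W^{\mathrm{sm}}$ forces $\CO_Z$ to split as in (\ref{split})), fixed part giving $[Y]^{\mathrm{vir}}\times_{X^k}\prod_\ell [V_{n_0}]^{\mathrm{vir}}$, moving part tallied as $I$-function factors, node smoothings $\Theta_\ell$, the $G_X$-gluing correction, and the calibration $\p^1$-direction with its $\CY_i$ twists --- is exactly the structure of the cited computations, and your treatment of the denominator (one representative node by the symmetry of entanglement, weight $z/k$ from the $k$-th root stack, and the $\CY_i$ for $i\geq k$ from the iterated blow-up) matches \cite{YZ}. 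The one place where your accounting should be sharpened is item (iii): the factor $\prod_\ell e_{\BC^*_z}(T_X)|_{z\mapsto z/k+\psi(\CD_\ell)}$ is best seen not as a deformation of the attachment point in $X$ (those directions already live in $Y$ through the marking $p_{m+\ell}$), but as the cancellation, by the translation part $T_{X,x}\subset G_X$ of the bubble automorphisms parameterised by $\CB=BG_X$, of the support-moving directions that are present in $N^{\mathrm{vir}}$ of $V_{n_0}\subset\Hilb_{n_0}(T_X/X)$ but absent from the actual normal complex of $F_k$; this is precisely the identification via (\ref{ident1}) and $\BL(\CD)$ that you flag at the end, so your instinct about where the care is needed is right.
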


\begin{proof} See \cite[Proposition 6.19]{NuG} and \cite[Lemma 6.5.6]{YZ}.  The difference is that the obstruction theory of maps is replaced by the obstruction theory of sheaves. The contribution from $\BM\widetilde{\CF\CM}_{[m]}(X,n)$ is exactly the same as in \cite[Proposition 6.19]{NuG}. 
	\end{proof}

\subsection{Analysis of the residue} \label{Secres}
Firstly, by Lemma \ref{Li},  we have 
\[ \psi(\CE_j)=\psi_j. \]
We also introduce the following notation,
\[A:=\prod^N_{i=1} \tau_{k_i}(\gamma_i), \quad A_{N_j}:=\prod_{i\in N_\ell} \tau_{k_i}(\gamma_i), \quad B=\prod^{j=m}_{j=1} \psi_j^{k'_j}\gamma'_j,\]
where $N_\ell \subseteq \{1,\dots, N\}$. 

By Theorem \ref{Masproper}, the master space $\BM\FuM^{\epsilon_0}_{n,[m]}(X)$ is proper. Hence by the virtual localization formula, the sum of residues at $z=0$ of localised classes associated to its $\BC^*_z$-fixed components (\ref{fixed}) is zero, 

\begin{multline} \label{therelation}
	\mathrm{Res}_z\left( \frac{[F_-]^{\mathrm{vir}}}{e_{\BC^*_z}(N^{\mathrm{vir}}_{F_-})}+\frac{[F_+]^{\mathrm{vir}}}{e_{\BC^*_z}(N^{\mathrm{vir}}_{F_+})}+\sum_{k\geq 1}\frac{[F_{k}]^{\mathrm{vir}}}{e_{\BC^*_z}(N^{\mathrm{vir}}_{F_{k}})} \right) \\
	=\mathrm{Res}_z([\BM\FuM^{\epsilon_0}_{n,[m]}(X)]^{\mathrm{vir}})=0,
\end{multline}
where $\mathrm{Res}_z$ is the $z=0$ residue, i.e.\ the coefficient of the term $1/z$ of a Laurent series. We interpret the equality above as a relation of cycles on any common space to which all terms admit a projection, like a point.  Recall that we expand all rational functions in $z$ in the range $|z|>1$. 

By Section \ref{secfixed}, the virtual tangent normal complexes of $F_-$ and $F_+$ are line bundles of weight $-1$ and $1$, respectively. Hence,
\[ \mathrm{Res}_z\left( \frac{[F_{-}]^{\mathrm{vir}}}{e_{\BC^*_z}(N^{\mathrm{vir}}_{F_{-}})}\right)=-[F_{-}]^{\mathrm{vir}}, \quad \mathrm{Res}_z\left( \frac{[F_{+}]^{\mathrm{vir}}}{e_{\BC^*_z}(N^{\mathrm{vir}}_{F_{+}})}\right)=[F_{+}]^{\mathrm{vir}}.\]
By inserting the class $A\cdot B$ and pushing forward  to the point the residue relation (\ref{therelation}), we therefore obtain 
\begin{equation} \label{theformula}
	\int_{[ \FuM^{\epsilon_+}_{n,[m]}(X)]^{\mathrm{vir}}}A \cdot B- \int_{[ \FuM^{\epsilon_-}_{n,[m]}(X)]^{\mathrm{vir}}} A\cdot B=- \sum_{k\geq 1}\mathrm{Res}_z \left(\int_{\frac{[F_{k}]^{\mathrm{vir}}}{e_{\BC^*_z}(N^{\mathrm{vir}}_{F_{k}})}}A\cdot B \right).
\end{equation}

We will now analyse the contribution of the wall-crossing components $F_{k}$. Using the second line of Proposition \ref{isomophism} and the fact that the gluing map $\widetilde{\mathrm{gl}}_k$ is of degree $k!$, we obtain that the residue associated to $F_{k}$ is equal to 
\begin{multline} \label{f1}
	\int_{[\widetilde{\mathrm{gl}}_k^*F_{k}]^{\mathrm{vir}} } \frac{A\cdot B }{k!}\cdot \mathrm{Res}_{z}\bigg(  \frac{ \prod^k_{\ell=1}(\sum_j \mathrm{c}_j(X)(z/k+\psi(\CD_\ell))^j) )}{-z/k-\psi(\CD_1)-\psi_{m+1}-\sum^{\infty}_{i=k}\CY_i} \\ 
	\cdot \prod^k_{\ell=1} \CI_{n_0}(z/k+\psi(\CD_\ell))\bigg). 
\end{multline}
By \cite[Lemma 2.7.3]{YZ}, the pullback of $\CY_i$ to  $\widetilde{\mathrm{gl}}_k^*F_{k}$ is equal to the pullback of the boundary divisor $\CY'_{i-k}$ from $ \widetilde{\CF\CM}_{ [m+k]}(X,n')$, which is defined in the same way but on $\widetilde{\CF\CM}_{[m+k]}(X, n')$. We also make the change of variables, which affects the residue only by a factor of $k$,  
\[ z \rightarrow k(z- \psi(\CD_1)-\psi_{m+1})=\dots=k(z- \psi(\CD_k)-\psi_{m+k}).\]
Overall, we obtain that (\ref{f1}) is equal to
\begin{multline} \label{f4}
	\int_{[\widetilde{\mathrm{gl}}_k^*F_{k}]^{\mathrm{vir}} } \frac{A\cdot B}{(k-1)!}\cdot \mathrm{Res}_{z}\bigg(  \frac{\prod^k_{\ell=1}(\sum_j \mathrm{c}_j(X)(z-\psi_{m+\ell})^j) )}{-z-\sum^{\infty}_{i=0}\CY'_i} \\
	\cdot \prod^k_{\ell=1} \CI_{n_0}(z-\psi_{m+\ell})\bigg).
\end{multline}

We now use the identification, 
\[ {\mathrm{gl}}_k^*F_{k}\cong Y \times_{X^k} \prod_{\ell=1}^{k} V_{n_0}. \]
On a FM degeneration $\CW=\CW_1 \cup \dots\cup \CW_{k+1}$,  a the structure sheaf of  zero-dimensional subscheme $\CO_Z$ splits as $\CO_{Z_1}\oplus \dots \oplus \CO_{Z_{k+1}}$, as long as it does not intersect the singular locus.  Wo conclude that over  $Y \times_{X^k} \prod_{\ell=1}^{k} V_{n_0}$, we have 
\begin{equation} \label{split}
 \tau_{k_i}(\gamma_i) =\tau_{k_i}(\gamma_i)_{|Y}\boxtimes \mathbb{1} \boxtimes \dots \boxtimes \mathbb{1}+\sum^{k}_{\ell=1} \mathbb{1} \boxtimes ... \boxtimes \tau_{k_i}(\gamma_i)_{|V_{n_0}} \boxtimes ... \boxtimes \mathbb{1}. 
 \end{equation}
Hence   the insertion $A$ splits among different components into insertions $A_{N_\ell}$,
\[A=\sum_{\underline{[N]}} A_{N'} \cdot A_{N_1} \dots  A_{N_k},\]
such that $A_{N'}$ is the insertions over $Y$ and $A_{N_\ell}$ are over the factors $V_{n_0}$. We allow $N'$ and $N_{\ell}$ to be empty. On the other hand, the insertion $B$ always remains on the component $Y$. 

 Applying the projection formula with respect to  $\widetilde{\mathrm{gl}}_k^*F_{k} \rightarrow \FuM^{\epsilon_+}_{n',[m+k]}(X)$ and  remembering that $Y \rightarrow  \FuM^{\epsilon_+}_{n',[m+k]}(X)$ is of degree  $1/k$, the formula (\ref{f4}) therefore becomes 

	\begin{equation*}
	\sum_{\underline{[N]}}	\int_{\FuM^{\epsilon_+}_{n',[m+k]}(X)} \frac{A_{N'}\cdot B}{k!} \cdot 
		\mathrm{Res}_z\left( \frac{\prod^{k}_{\ell=1}\pi^*_{m+\ell}	I_{n_0}(z-\psi_{m+\ell},A_{N_\ell})}{-z-\sum^\infty_{i=0} \CY'_i} \right). 
	\end{equation*} 
Let us introduce a shortened notation for the residue above, 
\[
C=\mathrm{Res}_z\left( \frac{\prod^{k}_{\ell=1}\pi^*_{m+\ell}	I_{n_0}(z-\psi_{m+\ell},A_{N_\ell})}{-z-\sum^\infty_{i=0} \CY'_i} \right).
\] 
We need to analyse the contributions of the divisors $\CY'_i$. We have 
\[ \frac{1}{-z-\sum^\infty_{i=0} \CY'_i}= \frac{-1}{z}+ \sum_{s\geq1}\sum_{r\geq 1}z^{-s}\CY'_{r-1}\cdot ( \sum_{i\geq 0} \CY'_i)^{s-1}. \]
For a Laurent polynomial $f(z)=\sum a_iz^i$ we define $[f(z)]_i=a_i$. Hence by the same argument as in \cite[Lemma 7.3.1]{YZ}, we obtain 
\begin{multline}
 \int_{\FuM^{\epsilon_+}_{n',[m+k]}(X)  } A_{N'}\cdot B\cdot C \cdot \CY'_{r-1}\cdot ( \sum_{i\geq 0} \CY'_i)^{s-1}\\
 =\sum_r\sum_{\underline{[N']}}\sum_{\underline{j}} \int_{\FuM^{\epsilon_+}_{n'',[m+k+r]}(X) }\frac{(-1)^{s-r} A_{N''}\cdot B\cdot C}{r!} \\
 \cdot \prod_{a=1}^{a=r}[\pi^*_{m+k+a}I_{n_0}(z-\psi,A_{N'_{a}} )]_{-j_a-1},
 \end{multline} 
where $\underline{j}=(j_1,\dots j_r)$ is an $r$-tuple of non-negative integers such that $j_1+...+j_r=s-r$. 
The difference with \cite{YZ} is that the sheaf-theoretic insertions  $A_{N'}$ have to be taken into account, by (\ref{split}) they split among the components that appear in the formula above. 

Plugging in this expression into the right-hand side of (\ref{theformula}), and relabelling the marked points, we obtain 
\[ \sum_{k\geq 1} \sum_{\underline{[N]}} \sum^{k-1}_{r=0} \sum_{\underline{b}} \frac{(-1)^r}{r!(k-r)!} \int_{\FuM^{\epsilon_+}_{n',[m+k]}(X) } A_{N'} \cdot  \prod_{\ell=1}^{k}[\pi^*_{n+\ell}I_{n_0}(z-\psi,A_{N_{\ell}} )]_{b_\ell}, \]
where we sum over integers $k$, partitions of the set $\{1,\dots, N\}$ and $\underline{b}$ runs through $k$-tuples of integers such that $b_1 + ...+ b_k=0$, and $b_{k-r+1},\dots, b_k<0$. By noticing that the terms with $b_\ell<0$ cancel out due to the sign, we obtain that the right-hand side of (\ref{theformula}) simplifies to
\[ \sum_{k\geq 1}\sum_{\underline{[N]}}  \frac{1}{k!} \int_{\FuM^{\epsilon_+}_{n',[m+k]}(X) } A_{N'} \cdot  \prod_{\ell=1}^{k}[\pi^*_{n+\ell}I(z-\psi,A_{N_{\ell}} )]_{0}, \]
which is exactly the wall-crossing formula, since 
\[[\pi^*_{n+\ell}I(z-\psi,A_{N_{\ell}} )]_{0}=\pi^*_{n+\ell}I(-\psi,A_{N_{\ell}} ),\]
where we use the convention of Theorem \ref{maintheorem} for the substitution of the variable $z$ with $\psi$-classes.  

\subsection{Equivariant case} \label{Seceq}
We now comment on how to adjust the proof of the wall-crossing formula in the non-proper equivariant setting. The issue is that $\BM\FuM^{\epsilon_0}_{n,[m]}(X)$ is not proper. To conclude that the total residue at $z=0$ vanishes in (\ref{therelation}), we have to find a space, which admits a proper $\BC^*_z$-equivariant projection from $\BM\FuM^{\epsilon_0}_{n,[m]}(X)$ and has a trivial $\BC^*_z$-action. Such space is  provided by the coarse symmetric product, and the projection is just the Hilbert--Chow map followed by the contraction map of FM degenerations,
\[\pi \colon \BM\FuM^{\epsilon_0}_{n,[m]}(X) \rightarrow X^{n}/S_n.\]

Consider the completed localised cohomology of $\BM\FuM^{\epsilon_0}_{n,[m]}(X)$, 
\[ H^*(\BM\FuM^{\epsilon_0}_{n,[m]}(X)^{\BC^*_z\times T})[\![ t_i^\pm,z^\pm]\!],\]
such that we expand all rational functions in $g_i$ and $z$ in the range $|z|> |t_i| >1$. The completion is needed, because the $T$-equivariant cohomology  is not nilpotent.  The residue at $z=0$ of an element in the group above is defined as the coefficient of the term $1/z$.  To prove that the residue relation (\ref{therelation}) holds, it is sufficient to establish 
that 
\begin{equation*}
\pi^T_*\left(\frac{[\BM\FuM^{\epsilon_0}_{n,[m]}(X)^{\BC^*_z\times T}]^{\mathrm{vir}}\cap A\cdot B}{e_{\BC^*_z\times T}(N^\mathrm{vir})}\right)
\in H^*((X^{n}/S_n)^T)[\![ t_i^\pm,z]\!].
\end{equation*}

Using commutativity of the diagram below and the properness of  projection to the symmetric product,

\begin{equation*} 
\begin{tikzcd}[row sep=small, column sep = small]
	\BM\FuM^{\epsilon_0}_{n,[m]}(X)^{\BC^*_z\times T} \arrow[d, "\pi^T"] \arrow[r] &  \BM\FuM^{\epsilon_0}_{n,[m]}(X) \arrow[d,"\pi"] & \\
	(X^{n}/S_n)^T \arrow[r,"\iota"] & X^{n}/S_n
\end{tikzcd}
\end{equation*}
we conclude that 
\[ \iota_*\pi^T_*\left(\frac{[\BM\FuM^{\epsilon_0}_{n,[m]}(X)^{\BC^*_z\times T}]^{\mathrm{vir}}\cap A \cdot B}{e_{\BC^*_z\times T}(N^\mathrm{vir})}\right) \in  H^*_T(X^{n}/S_n)\otimes_{\BC[t_i]} \BC[\![t^\pm_i, z]\!] . \]
By localisation \cite[Theorem 1]{EG}, the localised class must also be contained in $H^*((X^{n}/S_n)^T)[\![t_i^\pm, z]\!]$. 
This proves that the total residue at $z=0$ in (\ref{therelation}) is indeed zero. We then proceed as in the non-equivariant case. 

\bibliographystyle{amsalpha}
\bibliography{HilbFM}
\end{document}